\newcommand{\propref}[1]{Proposition~\ref{#1}}
\newcommand{\lemref}[1]{Lemma~\ref{#1}}
\newcommand{\eqnref}[1]{~(\ref{#1})}
\newcommand{\germ}{\mathfrak}
\newcommand{\stack}[2]{\genfrac{}{}{0pt}{}{#1}{#2}}
\newtheorem{thm}{Theorem}[section]
\newtheorem{lem}[thm]{Lemma}
\theoremstyle{definition}
\newtheorem{prop}[thm]{Proposition}
\numberwithin{equation}{section}
\title{A Wakimoto type realization of toroidal $\mathfrak{sl}_{n+1}$.}
\author{Samuel Buelk \and Ben L. Cox  \and Elizabeth Jurisich}
\date{}                                           
\begin{document}
\begin{abstract}The authors construct a Wakimoto type realization of toroidal $\mathfrak{sl}_{n+1}$ The representation constructed in this paper utilizes non-commuting differential operators acting on the tensor product of two polynomial rings in many commuting variables. 
\end{abstract}

\maketitle
\section{Introduction}
Toroidal Lie algebras were first introduced in \cite{MR} as a natural generalization of affine algebras.  Given a finite-dimensionl simple Lie algebra $\mathfrak a$ a toroidal algebra is a central extension of $\mathfrak a \otimes \mathbb C [t_1, \dots, t_n, t_1^{-1}, \dots ,t^{-1}_n]$, where the $t_i$ are commuting variables. Toroidal algebras can be thought of as iterated loop algebras in many commuting variables. Such algebras can also be defined using generators and relations as we do here. 

One motivation for the study of toroidal Lie algebras is for potential applications to math and physics. For instance, one of the cocycles used in the construction of the toroidal extended affine Lie algebra is also used in Y. Billig's study of a magnetic hydrodynamics equation with asymmetric stress tensor (see \cite{MR2290921} and \cite{MR2131250}).  In addition Billig and independently Iohara, Saito, and Wakimoto (see \cite{MR1700475}, and \cite{MR2000d:17037}) derive Hirota bilinear equations arising from both homogeneous and principal realizations of the vertex operator representations of 2-toroidal Lie algebras of type $A_l, D_l, E_l$.  They derive the hierarchy of Hirota equations and present their soliton-type solutions.  In  \cite{MR1937604},   Kakei, Ikeda, and Takasaki construct the hierarchy associated to the $(2+1)$-dimensional nonlinear Schr\"{o}dinger (NLS) equation and show how the representation theory of toroidal $\mathfrak{sl}_2$ can be used to derive the Hirota-type equations for $\tau$-functions. On the somewhat more mathematical side, in interesting work of V. Ginzburg, M. Kapranov, and E. Vasserot (see 
\cite{MR1324698}) on Langland's reciprocity for algebraic surfaces,  Hecke operators are constructed for vector bundles on an algebraic surface. The main point 
of their paper is that under certain conditions the corresponding algebra of Hecke operators is the homomorphic image of a quantum toroidal algebra.  One should also note some of the recent  work of Slodowy, Berman and Moody, Benkart and Zelmanov  on generalized intersection matrix algebras involve their relationship to toroidal Lie algebras (see \cite{MR93e:17031}, \cite{MR97k:17044} and \cite{MR832211}.) In addition,
Wakimoto's free field realization of affine $\widehat{\mathfrak{sl}_2}$ and Feigin and Frenkel's generalization to non-twisted affine algebras $\hat{\mathfrak g}$ play a fundamental role in describing integral solutions to the Knizhnik-Zamolodchikov equations (see for example \cite{Wa}, \cite{MR92d:17025},  \cite{MR1138049},  \cite{MR2001b:32028}, \cite{MR1077959}, \cite{MR93b:17067} and \cite{MR93b:17067}).

The representation constructed here is similar to what is often called a ``free field" representation, that is our Lie algebra elements will be realized as formal power series of noncommuting differentiable operators $a_n, n \in \mathbb Z$ acting on a given vector space $V$, where the formal power series associated with the Lie algebra become finite when applied to an element  $v \in V$. Our representation is constructed by first finding a representation of an infinite-dimensional Heisenberg like algebra, and then ``inducing" to the full toroidal algebra. 
The free field representation in this paper is a generalization of the works of the second author \cite{MR2003g:17034} and \cite{C2} which were in turn motivated by the work of Feigin and Frenkel constructing free field realizations of affine Kac Moody and $W$-algebras \cite{FF} and \cite{FF90}, as well as \cite{BF}. A completely different representation of a class of toroidal algebras given by free bosonic fields appears in \cite{JMX}. Interestingly, some free field representations of toroidal lie algebras can be used to construct vertex algebras  \cite{MR1919810} of a certain type where all simple graded modules can be classified \cite{MR2102089}. 

Part of our motivation for studying Wakimoto type realizations of toroidal $\mathfrak{sl}_{n+1}$ is to gain insight into the role of  $2$-cocycles in a more general construction of free field realizations for universal central extensions of Lie algebras of the form $\mathfrak g\otimes R$ where $R$ is an algebra over the complex numbers.  Another motivation is that they often can provide, in the generic setting, realizations in terms of partial differential operatos of imaginary type Verma modules for toroidal Lie algebras.  We plan to see how the realizations in this paper are related to these modules in future work.

\section{Notation and preliminary setting}
All vector spaces are over the field of complex numbers $\mathbb C$. 

Let $A_n=(A_{ij})_{i,j=0}^n$ be the indecomposable Cartan matrix of affine type  $A_n^1$ with $n\geq 2$.   
Let $\Pi = \{\alpha_0,\alpha_1,\dots, \alpha_n\}$ denote the simple roots, a basis for the set of roots denoted $\Delta$.   
Let $Q$ be the root lattice, i.e. the free $\mathbb Z$-module with generators $\alpha_0,\alpha_1,\dots, \alpha_n$.    The matrix $A_n$ induces a symmetric  bilinear form $(\cdot|\cdot)$ on $Q$ satisfying $(\alpha_i|\alpha_j)=A_{ij}$.   For $0\leq i\leq n$, we set $\check \alpha_i:=\alpha_i$.

We review some of the calculus of formal series, following reference \cite{LeLi} and we introduce a slightly modified for of the ${\boldsymbol \lambda}$-bracket notation and Fourier transform of \cite{Ka} which provides a very compressed notation, however many of the calculations are actually done in the more expanded form of \cite{LeLi}. 
As pointed out in \cite{LeLi}, the formal calculus generalizes to several commuting variables, the case used here. Throughout this paper $z_i, w_i, x_i, y_i, \lambda_i$ will denote mutually commuting formal variables, with $i$ ranging over some index set.
We use multi-index notation, for a positive integer $k$ given an element  $(m_0, m_1, m_2 , \dots m_k)\in \mathbb Z^{k+1}$ we write  $\mathbf m =(m_0, m_1, m_2 , \dots m_k)$, and define $\mathbf z^{\mathbf m} = z_0^{m_0} z_1^{m_1}z_2^{m_2}  \dots z_k^{m_k}$. 
Denote by $\mathbf 0$ the $k$-tuple of all zeros, and by $\mathbf 1$ the $k$-tuple of all ones. Fix a decomposition of $\mathbb Z^{k+1}=\mathbb Z^{k+1}_-\cup\{\mathbf 0\}\cup\mathbb Z^{k+1}_+$
into three disjoint subsets such that $\mathbb Z^{k+1}_\pm$ are sets closed under
vector addition i.e.
for example, if
$\mathbf j,\mathbf k\in\mathbb Z^{k+1}_+$, then $\mathbf j+\mathbf k\in
\mathbb Z^{k+1}_+$. 
Define $\mathbf m >\mathbf 0$ if
$\mathbf m\in
\mathbb Z^{k+1}_+$  and
$\mathbf m <\mathbf 0$ if
$\mathbf m\in \mathbb Z^{k+1}_-$.  Define the function
$\theta$ by
$$
\theta(\mathbf m)=\begin{cases}
1\quad \text{if}\quad\mathbf m>0 \\[2mm]
0\quad \text{otherwise.} \\[2mm]
\end{cases}
$$

We work with formal series  
\begin{equation}
a (\mathbf z) = \sum_{\mathbf n \in \mathbb Z^{k+1}} a_{\mathbf n}  \mathbf z^{-\mathbf n}
\end{equation}
with $a_{\mathbf n} \in   \mathrm {End} (V)$ for a vector space $V$ described below. The series in this paper are summable in the sense of \cite{LeLi}, i.e. the coefficient of any monomial in the formal sum acts as a finite sum of operators when applied to any vector $v \in  \mathrm {End} (V)$. For a multivariable $\mathbf z =(z_0, z_2, \dots z_k)$, let $\mathbf z^{-\mathbf 1}$ denote $ (z_0^{-1}, z_1^{-1} , \dots z_{k}^{-1}) $. To simplify notation we denote $ \mathbb C [[z_0, z_0^{-1}, z_1, z_1^{-1} , \dots z_k, z_k^{-1}]]$ as $ \mathbb C[[\mathbf z,\mathbf z^{-\mathbf 1}]]$.  Define 
\begin{equation}
\delta(\mathbf z) := \sum_{n \in \mathbb Z^{k+1} } \mathbf z^{\mathbf n} \in  \mathbb C[[\mathbf z,\mathbf z^{-1}]. 
\end{equation}
Similarly, 
$$
\delta(  \mathbf z/  \mathbf w):= \sum_{\mathbf m\in\mathbb Z^{k+1}}\mathbf z^{\mathbf m}\mathbf w^{-\mathbf m} \in \mathbb 
C[[\mathbf z, \mathbf z^{-\mathbf 1}, \mathbf w, \mathbf w^{-\mathbf 1}]]
$$
so that
$$
\delta (\mathbf z/ \mathbf w)=\prod_{i=0}^k \delta(z_i/w_i)
\qquad\text{where}\qquad  
\delta(z_i/w_i)=\sum_{k\in\mathbb Z}z_i^k w_i^{-k}.
$$

The following properties of $\delta $ hold, see Proposition 2.1.8 \cite{LeLi} which we reproduce here in the multivariable setting, 
\begin{prop}\begin{enumerate}
\item Let $f(\mathbf z) \in  V [\mathbf z,\mathbf z^{-\mathbf 1}]$. Then
\begin{equation}
f(\mathbf z) \delta (\mathbf z) = f(\mathbf 1) \delta(\mathbf z).
\end{equation}
\item  Let $ f(\mathbf z,\mathbf w) \in \mathrm{End} V [[\mathbf z,   \mathbf z^{-\mathbf 1}, \mathbf w , \mathbf w^{-\mathbf 1}  ]] $ such that $\lim_{\mathbf z \rightarrow \mathbf w} f(\mathbf z,\mathbf w)$ exists. 
 Then in $\mathrm {End}(V) [\mathbf z, \mathbf z^{-\mathbf 1},\mathbf w, \mathbf w^{-\mathbf 1}]]$
\begin{equation}
f(\mathbf z,\mathbf w) \delta (\mathbf z/\mathbf w) = f(\mathbf z,\mathbf z)\delta(\mathbf z/\mathbf w) 
= f(\mathbf w,\mathbf w)\delta(\mathbf z/\mathbf w)
\end{equation}
\end{enumerate}
\end{prop}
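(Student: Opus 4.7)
Both parts reduce directly to the single-variable statement, which is Proposition 2.1.8 in \cite{LeLi}.

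For part (1), by linearity it suffices to verify the identity on monomials $f(\mathbf z)=\mathbf z^{\mathbf m}$ for arbitrary $\mathbf m\in\mathbb Z^{k+1}$. The computation
$$
\mathbf z^{\mathbf m}\delta(\mathbf z)=\sum_{\mathbf n\in\mathbb Z^{k+1}}\mathbf z^{\mathbf n+\mathbf m}=\sum_{\mathbf n'\in\mathbb Z^{k+1}}\mathbf z^{\mathbf n'}=\delta(\mathbf z)
$$
follows from the translation invariance of the index set $\mathbb Z^{k+1}$ under the shift $\mathbf n\mapsto\mathbf n+\mathbf m$, and this coincides with $f(\mathbf 1)\delta(\mathbf z)$ since $\mathbf 1^{\mathbf m}=1$.

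For part (2), I would exploit the factorization $\delta(\mathbf z/\mathbf w)=\prod_{i=0}^k\delta(z_i/w_i)$ and apply the single-variable identity one coordinate at a time. Multiplying $f(\mathbf z,\mathbf w)$ by $\delta(z_0/w_0)$ and invoking Proposition 2.1.8(2) of \cite{LeLi} in the variable $z_0$ alone, treating the remaining $z_i$ and all components of $\mathbf w$ as parameters, substitutes $z_0=w_0$ inside $f$. Iterating for $i=1,\dots,k$ replaces every $z_i$ by $w_i$, producing $f(\mathbf w,\mathbf w)\delta(\mathbf z/\mathbf w)$. Running the same argument in the opposite direction, substituting $w_i$ by $z_i$ at each step, yields $f(\mathbf z,\mathbf z)\delta(\mathbf z/\mathbf w)$.

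The main technical point to track is the summability bookkeeping: one must check that at each intermediate step the partially specialized series remains summable in the sense of \cite{LeLi}, so that the next application of the single-variable lemma is legitimate. This is where the hypothesis that $\lim_{\mathbf z\to\mathbf w}f(\mathbf z,\mathbf w)$ exists does the real work, as it guarantees that every coordinate-wise specialization gives a well-defined element of the appropriate formal power series ring. Modulo this verification, the argument is a routine iteration of the one-dimensional case.
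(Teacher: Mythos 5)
Your argument is correct and follows exactly the route the paper intends: the paper states this proposition as the multivariable reproduction of Proposition 2.1.8 of \cite{LeLi} and offers no proof of its own, and your reduction---monomial check plus translation invariance for (1), and coordinate-by-coordinate iteration of the one-variable identity via the factorization $\delta(\mathbf z/\mathbf w)=\prod_i\delta(z_i/w_i)$ for (2)---is the standard way to supply the omitted details. Your remark that the existence of $\lim_{\mathbf z\to\mathbf w}f(\mathbf z,\mathbf w)$ is what legitimizes each partial specialization is exactly the right point to flag, and it does go through: the set of indices contributing to a coefficient of any partial specialization injects into the (finite) set contributing to the corresponding coefficient of the full specialization.
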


The formal residue for an element $f(\mathbf z) \in V[[\mathbf z,\mathbf z^{-\mathbf 1}]]$ is
$$
\mathrm{Res}_{z_i} \sum_{\mathbf n \in \mathbb Z^{k+1}} a_{\mathbf n} \mathbf z^{\mathbf n} = \sum_{\genfrac{}{}{0pt}{}{\mathbf n}  {n_i = -1} } a_{\mathbf n} z_i \mathbf z^{\mathbf n} 
$$
Alternatively we can define $\mathrm {Res}_{\mathbf z}$ as
$$
\mathrm{Res}_{\mathbf z}   \sum_{\mathbf n \in \mathbb Z^{k+1}} a_{\mathbf n}  \mathbf z^{\mathbf n} = a_{-\mathbf 1}
$$
the coefficient of $(-1, -1, \dots -1)$.

We introduce a slightly modified form of V. Kac's ${\boldsymbol \lambda}$-bracket notation and Fourier transform (see \cite{Ka}).  
For any 
$$
a( \mathbf z,  \mathbf w)=\sum_{\mathbf m,\mathbf n}a_{\mathbf m,\mathbf n}\mathbf z^{\mathbf m}\mathbf w^{\mathbf n}
$$
we define the Fourier transform
\begin{align*}
F_{ \mathbf z,  \mathbf w}^{\boldsymbol \lambda}a(  \mathbf z,  \mathbf w)=\text{Res}_{z_0}
\dots \text{Res}_{z_N}e^{\sum_{i=0}^k\lambda_i(z_i-w_i)}a(  \mathbf z, \mathbf w).
\end{align*}

For $\mathbf j=(j_0,\dots, j_k)\in\mathbb N^{k+1}$, set $\mathbf j!=j_0!j_1!\cdots j_k!$, $ \partial^{(j_i)}_{w_i}=\frac{1}{j_i!}\partial_{w_i}^{j_i}$, and  
$\partial^{(\mathbf j)}=\prod_{j=0}^k \partial^{(j_i)}_{w_i}$. We also write $ {\boldsymbol \lambda}^{(\mathbf j)}:={\boldsymbol \lambda}^{\mathbf j}/\mathbf j!=\prod_{i=0}^k \frac{\lambda_i^{j_i}}{j_i!}$. 
Using this notation allows us to compress many of the formal series we shall encounter, due to the following identity
\begin{equation}
F^{\boldsymbol \lambda}_{ \mathbf z,  \mathbf w}\partial^{(\mathbf j)}\delta(\mathbf z/   \mathbf w)={\boldsymbol \lambda}^{(\mathbf j)}
\label{IDlambda} \end{equation}
To prove identity \ref{IDlambda}  we recall a few properties shown in (\cite[Prop. 2.1]{Ka}):  For $j>0$, 
\begin{align*}
\text{Res}_z\partial_{w}^{(j)}\delta(z/w)&=0 , \\
(z-w)\partial^{(j+1)}_{w}\delta(z/w) &=\partial^{(j)}_{w}\delta(z/w),\quad\text{ and}\\
(z-w)^{j+1}\partial^{(j)}_{w}\delta(z/w) &=0  .
\end{align*}
Thus
\begin{align*}
F_{\mathbf z,\mathbf w}^{\boldsymbol \lambda}\partial^{(\mathbf j)}\delta(\mathbf z/\mathbf w)&=\text{Res}_{z_0}
\cdots \text{Res}_{z_n}e^{\sum_{i=0}^k\lambda_i(z_i-w_i)}\partial^{(\mathbf j)}\delta(\mathbf z/\mathbf w) \\
&=\text{Res}_{z_0}
\cdots \text{Res}_{z_n}\left(\prod_{i=0}^k\left(\sum_{k_i=0}^\infty\frac{1}{k_i!}  \lambda_i^{k_i}(z_i-w_i)^{k_i} \right)\prod_{i=0}^k \partial^{(j_i)}_{w_i}\delta(z_i/w_i)
\right)  \\
&=\prod_{i=0}^k\text{Res}_{z_i}
\left(\sum_{k_i=0}^\infty\frac{1}{k_i!}  \lambda_i^{k_i}(z_i-w_i)^{k_i} \right)\partial^{(j_i)}_{w_i}\delta(z_i/w_i). \\
&=\prod_{i=0}^k \frac{\lambda_i^{j_i}}{j_i!} = {\boldsymbol \lambda}^{(\mathbf j)}.
\end{align*}
If $a(  \mathbf z)$, $b(  \mathbf w)$ and $c^{\mathbf j}(  \mathbf w)$ are formal distributions satisfying 
$$
[a( \mathbf z),b( \mathbf w)]=\sum_{\mathbf j\in\mathbb N^{k+1}}c^{\mathbf j}( \mathbf w)\partial^{(\mathbf j)}\delta( \mathbf z/ \mathbf w),
$$
we have
$$
F^{\boldsymbol \lambda}_{ \mathbf z, \mathbf w}[a ( \mathbf z),b( \mathbf w)]=\sum_{\mathbf j\in\mathbb N^{k+1}}c^{\mathbf j}(\mathbf w){\boldsymbol \lambda}^{(\mathbf j)}.
$$
The  ${\boldsymbol \lambda}$-{\it bracket} is defined as
$$
[a(\mathbf w)_{\boldsymbol \lambda} b(\mathbf w)]=\sum_{\mathbf j\in\mathbb N^{k+1}}c^{\mathbf j}(\mathbf w){\boldsymbol \lambda}^{(\mathbf j)},
$$
achieving the compressed notation. When the variables are clear from the context, we sometimes omit the formal multi-variables $\mathbf z,\mathbf w$. Properties of the ${\boldsymbol \lambda}$-bracket that we use frequently include 
\begin{equation}
   [a_{\boldsymbol \lambda} [ b_{\boldsymbol \eta} c]] = [[a_{\boldsymbol \lambda} b]_{{\boldsymbol \lambda} + {\boldsymbol \eta} }c] + [b_{\boldsymbol \eta} [a_{\boldsymbol \lambda} c]] \end{equation}
   \begin{equation}
   [a_{\boldsymbol \lambda} (bc) ] = [a_{\boldsymbol \lambda} b]c  + b[a_{\boldsymbol \lambda} c] 
\label{rule2}
\end{equation}

\subsection{The toroidal Lie algebra}

Fix a positive integer $N$. We define the toroidal Lie algebra $\tau(A_n)$ by generators 

\begin{align*}
K_{\mathbf m,j},\quad H_i(\mathbf m),\quad E_i(\mathbf m),\quad F_i(\mathbf m),\quad 0\leq i\leq n,\, 0\leq j\leq N,\, \mathbf m \in\mathbb Z^{N+1}
\end{align*}
and relations  

\begin{enumerate}
\item[(0)]  \begin{itemize}
\item[i.] The $K_{\mathbf m,j}, 0\leq j \leq N$ are central;
\item[ii.] $\sum_{i=0}^Nm_iK_{\mathbf m,i}=0$;
\end{itemize}  
\label{relation1}
\item [(1)] $\left[H_i(\mathbf m),H_j(\mathbf n)\right]=A_{ij}\left(\sum_{l=0}^Nm_l K_{\mathbf m+\mathbf n, l}\right)$ ($0\leq i,j\leq n$);  \\ 
\item [(2)] $\left[H_i(\mathbf m),E_j(\mathbf n)\right]=A_{ij}E_{j }(\mathbf m+\mathbf n)$, \\
 	$\left[H_i(\mathbf m),F_{j}(\mathbf n)\right]=-A_{ij}F_{j}(\mathbf m+\mathbf n)$;  \\
\item [(3)] $\displaystyle{\left[ E_{i}(\mathbf m),F_{j}(\mathbf n)\right]=-\delta_{i,j}\left(H_{i}(\mathbf m+\mathbf n)+\frac{2}{A_{ij}}\sum_{l=0}^Nm_l K_{\mathbf m+\mathbf n ,l}\right)}$;  \\
\item[(4)] 
\begin{itemize}
\item[i.] $ \left[ E_{i}(\mathbf m),E_{i }(\mathbf n)\right]=0= \left[ F_{i}(\mathbf m),F_{i}(\mathbf n)\right]$; 
\item[ ii.]
 $\text{ad}\, E_{i }(\mathbf m)^{-A_{ij}+1}E_{j}(\mathbf n)=0$ for $i\neq j$; 
 \item[iii.]
$\text{ad}\, F_{i}(\mathbf m)^{-A_{ij}+1}F_{j}(\mathbf n)=0$ for $i\neq j$; 
\end{itemize}
\end{enumerate}
We also write generating functions for the generators of $\tau ( A_n)$ $ 1\leq i \leq n, 0\leq s \leq N$:
\begin{alignat}{2}
K_s(\mathbf z) &= \sum_{\mathbf m \in \mathbb Z^{N+1}} K_{\mathbf m, s} \mathbf z^{-\mathbf m},  
&H_i(\mathbf z)& = \sum_{\mathbf m \in \mathbb Z^{N+1}} H_i( \mathbf m ) \mathbf z^{-\mathbf m},  \\
E_i(\mathbf z)& = \sum_{\mathbf m \in \mathbb Z^{N+1}}  E_i( \mathbf m ) \mathbf z^{-\mathbf m},
\hskip 30pt& F_i(\mathbf z)& = \sum_{\mathbf m \in \mathbb Z^{N+1}}E_i( \mathbf m ) \mathbf z^{-\mathbf m}\notag
\end{alignat}

Let $\partial_{z_i} = { \frac   {\partial  }{ \partial   z_{i} } } $ denote formal differentiation. Define the operator $ D_{z_i}  = { \frac   {\partial  }{ \partial   z_{i} } }$, and $  D := \sum_{s= 0}^N D_{z_s}$ (the indeterminate in use is understood in the context of the formula). 
\begin{equation}
K(\mathbf z)\cdot D = \sum_{i=0}^N K_i(\mathbf z){\frac{\partial} {\partial z_i}},  
 K(\mathbf z)   =\sum_{i=0}^N K_i(\mathbf z) =  \sum_{i=0}^N\sum_{\mathbf m}  K_{\mathbf m,i} \mathbf z^{-\mathbf m}z_i,  
\end{equation}
\begin{equation}
  D\cdot K (\mathbf z) = \sum_{s=0}^N \sum_{\mathbf m\in \mathbb Z^{N+1}} m_p K_{\mathbf m, s} z^{\mathbf m}
\end{equation} 
\begin{enumerate}
\item[(R0)]  
\begin{itemize}
\item[i.] $K_i(\mathbf z)$ is central;
\item[ii.] $  D \cdot K(\mathbf z) =0$;  
\label{relations}
\end{itemize}
\item [(R1)] $\left[H_i(\mathbf z),H_j(\mathbf w)\right]=A_{ij} K(\mathbf w) \cdot D \delta (\mathbf z/\mathbf w)$;  \\ 
\item [(R2)] $\left[H_i(\mathbf z),E_j(\mathbf w)\right]=A_{ij}E_{j }(\mathbf w)\delta(\mathbf z/\mathbf w)$, \\
 	$\left[H_i(\mathbf z),F_{j}(\mathbf w)\right]=-A_{ij}F_{j}(\mathbf w)\delta(\mathbf z/\mathbf w)$;  \\
\item [(R3)] $\displaystyle{\left[ E_{i}(\mathbf z),F_{j}(\mathbf w)\right]=-\delta_{i,j}\left(H_{i}(\mathbf w)+\frac{2}{A_{ij}}K({\mathbf w})\cdot D\right) \delta (\mathbf w/\mathbf z)}$;  \\
\item[(R4)] \label{relation 4}
\begin{itemize}
\item[i.] $ \left[ E_{i}(\mathbf z),E_{i }(\mathbf w)\right]=0= \left[ F_{i}(\mathbf z),F_{i}(\mathbf w)\right]$; 
\item[ii.]
 $\text{ad}\, E_{i }^{-A_{ij}+1} (\mathbf z) E_{j}(\mathbf w)=0$ for $i\neq j$; 
 \item[iii.]
$\text{ad}\, F_{i}^{-A_{ij}+1}(\mathbf z)F_{j}(\mathbf w)=0$ for $i\neq j$; 
\end{itemize}
\end{enumerate}
We demonstrate how to write relation (1) as (R1):
\begin{align*}
\sum_{\mathbf m,\mathbf n}
[H_i(\mathbf m),H_j(\mathbf n)]\mathbf z^{-\mathbf m}\mathbf w^{-\mathbf n}&=A_{ij}
\sum_{\mathbf m,\mathbf n}
\left(\sum_{l=0}^Nm_l K_{\mathbf m+\mathbf n, l}\right)\mathbf z^{-\mathbf m}\mathbf w^{-\mathbf n}  \\
&=A_{ij}
\sum_{\mathbf m,\mathbf n}
\left(\sum_{l=0}^NK_{\mathbf m+\mathbf n, l}\mathbf w^{-\mathbf m-\mathbf n}\right)m_l (\mathbf w^{\mathbf m}\mathbf z^{-\mathbf m} )  \\
&=A_{ij}
\sum_{\mathbf r }
\left(\sum_{l=0}^NK_{\mathbf r, l}\mathbf w^{-\mathbf r}\right)\sum_{\mathbf m}m_l (\mathbf w^{\mathbf m}\mathbf z^{-\mathbf m} )  \\
&=A_{ij}
\sum_{\mathbf r }
\left(\sum_{l=0}^NK_{\mathbf r, l}\mathbf w^{-\mathbf r}\right) w_l\partial_{w_l}\sum_{\mathbf m}  (\mathbf w^{\mathbf m}\mathbf z^{-\mathbf m} )  \\
&=A_{ij}
\sum_{l=0}^N 
 K_{ l}(\mathbf w)\partial_{w_l}\delta(\mathbf z/\mathbf w) 
\end{align*}

\section{The toroidal Heisenberg algebra}

Define the {\it the toroidal Heisenberg algebra}, $\mathfrak B$, as the Lie algebra with generators $b_{i}(\mathbf r$) (1$\leq i \leq n$) and $K_{\mathbf r,p}$ (0 $\leq p \leq N$, $\mathbf r \in \mathbb{Z}^{n-1}$) which satisfy the following relation:

\begin{equation}\label{heisenbergrelns}
 [b_{i}(\mathbf r), b_{j}(\mathbf s)] = A_{ij}\sum_{p=0}^{N}\mathbf r_{p}K_{\mathbf {r+s},p},\quad \text{ and }\quad
 \sum_{p=0}^{N}\mathbf r_{p}K_{\mathbf {r},p}=0,\quad \forall \mathbf r\in\mathbb Z^{N+1}.
\end{equation}
Here $A_{ij}$ denotes the $i, j^{th}$ entry of the Cartan matrix $A_n$ where we have deleted the first row and column.  
If we set \begin{equation}  \label{defb0}b_0(\mathbf m):=-\sum_{i=1}^nb_i(\mathbf m),\end{equation}   then one can check that the first equality in \eqnref{heisenbergrelns} is satisfied also for $i=0$ or $j=0$. 

\subsection{Representation of the Heisenberg algebra $\mathfrak B$}

We define a polynomial ring  over indeterminates indexed by $0< i \leq n+1  $ and $\mathbf k \in {\mathbb Z}^{N+1}$. 
$$
\mathbb C[\mathbf y]:=\mathbb{C}[y_{i}(\mathbf k) | \mathbf k \in \mathbb{Z}^{N+1},\mathbf k>0,1\leq i\leq n]. 
$$
For  fixed $\kappa_{\mathbf m,p}\in\mathbb C$, $0\leq p\leq N$ and $\lambda_i\in\mathbb C$ we define a map $\Phi: \mathfrak B \rightarrow  \mathrm{End}\, \mathbb C[\mathbf y]   $ below by an action on the generators. The construction of the map is similar to that appearing in \cite{C2}. 
The motivation for the definition of $\Phi$ uses heuristic ideas about how the toroidal Lie algebra ``should" act on sections of certain (not well defined) line bundles.  For readers who are interested in this heuristic type of construction, one could consult \cite{FF90}, \cite{BF},  \cite{MR2271362}. The resulting map $\Phi$ is twisted as in \cite{C2} so that $\Phi (b_i (\mathbf m ) )$ is a well defined element of $\mathrm{End} \, \mathbb C[\mathbf y]$. The definition of $\Phi (b_0 (\mathbf m ) )$ follows from the definition(\ref{defb0}).

\begin{prop}[Realization of the Toroidal Heisenberg Algebra] \label{bosonprop} Fix $\kappa_{\mathbf m,p}\in\mathbb C$, $0\leq p\leq N$ and $\lambda_i\in\mathbb C$ where $0\leq i\leq n$.   Assume
\begin{gather}
\sum_{p=0}^Nm_p\kappa_{\mathbf m,p}=0,\quad \text{ for all }\mathbf m,\label{eqn1}  \\ 
 \sum_{p=0}^N
m_p\kappa_{-\mathbf m-\mathbf n,p}=0\quad\text{ for }\mathbf m>0\text{ and }\mathbf n>0.\label{eqn2}
\end{gather} Then the map $\Phi : \mathfrak B \rightarrow \mathrm{End}\, \mathbb C[\mathbf y]$ given by 
\begin{align*}
\Phi\big(b_{i}(\mathbf m)\big) & = \theta(-\mathbf m)\sum_{p=0}^{N}\sum_{\mathbf s>0}\bigg(\partial_{y_{i-1}(\mathbf s)}  -  \partial_{y_{i}(\mathbf s)}\bigg)   m_{p}\kappa_{\mathbf{-m+s},p}\\
& + \theta(\mathbf{m})\sum_{p=0}^{N}\sum_{\mathbf s>0} \bigg(\partial_{y_{i-1}(\mathbf s)}  - 2 \partial_{y_{i}(\mathbf s)}  +  \partial_{y_{i+1}(\mathbf s)}  \bigg)   m_{p}\kappa_{\mathbf{-m+s},p} \\
&+ \theta(-\mathbf m)y_{i}(-\mathbf m) - \delta_{\mathbf m, 0}\lambda_{i}\\ \\
\Phi(K_{\mathbf m+\mathbf n,p})&=-\kappa_{-\mathbf m-\mathbf n,p}
\end{align*}
for $1\leq i\leq n$, $\mathbf m,\mathbf n\in\mathbb Z^{N+1}$ defines a representation $\mathfrak B$ on $\mathbb C[\mathbf y]$.
\end{prop}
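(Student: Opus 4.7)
My plan is to verify the defining relations of $\mathfrak B$ directly on the given images. The central-element relation $\sum_p r_p \Phi(K_{\mathbf r, p}) = 0$ rewrites as $\sum_p r_p \kappa_{-\mathbf r, p} = 0$, which is precisely \eqref{eqn1} applied to $-\mathbf r$; centrality of $\Phi(K_{\mathbf r, p})$ is automatic since its image is a scalar. The main work is then to check
$$[\Phi(b_i(\mathbf m)), \Phi(b_j(\mathbf n))] = -A_{ij} \sum_{p=0}^N m_p \kappa_{-\mathbf m - \mathbf n, p}$$
for $1 \leq i, j \leq n$; the case $i = 0$ or $j = 0$ then follows from $\Phi(b_0(\mathbf m)) = -\sum_{i=1}^n \Phi(b_i(\mathbf m))$ combined with the affine Cartan identity $A_{0j} = -\sum_{i=1}^n A_{ij}$.

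The operator $\Phi(b_i(\mathbf m))$ decomposes as a differential piece (of one of two shapes, depending on the sign of $\mathbf m$), a multiplication piece $\theta(-\mathbf m) y_i(-\mathbf m)$, and a scalar. Differential parts commute among themselves, and so do the multiplication parts, so the commutator collapses to cross terms governed by $[\partial_{y_k(\mathbf s)}, y_l(\mathbf t)] = \delta_{k,l}\delta_{\mathbf s, \mathbf t}$. I would split the argument by the signs of $\mathbf m$ and $\mathbf n$. For $\mathbf m, \mathbf n > 0$ the bracket vanishes outright, and so does the right-hand side, by \eqref{eqn2}. When exactly one is positive, a single cross commutator survives: specializing $\mathbf s$ to the positive one of $-\mathbf m, -\mathbf n$ produces the combination $\delta_{i-1,j} - 2\delta_{i,j} + \delta_{i+1,j} = -A_{ij}$ directly in one orientation, and the other is reduced to it using $\sum_p(m_p+n_p)\kappa_{-\mathbf m-\mathbf n, p} = 0$, which is \eqref{eqn1} applied at $-\mathbf m - \mathbf n$.

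The delicate case is $\mathbf m, \mathbf n < 0$: here each operator carries both a ``first-difference'' differential piece $\partial_{y_{i-1}} - \partial_{y_i}$ and a multiplication factor, so two cross commutators contribute, producing
$$(\delta_{i-1,j} - \delta_{i,j}) \sum_p m_p \kappa_{-\mathbf m - \mathbf n, p} + (\delta_{i,j} - \delta_{i+1,j}) \sum_p n_p \kappa_{-\mathbf m - \mathbf n, p}.$$
Applying \eqref{eqn1} at $-\mathbf m - \mathbf n$ converts $\sum_p n_p \kappa$ into $-\sum_p m_p \kappa$, and the two first-differences then merge into the symmetric second-difference $-A_{ij}$. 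This is the main obstacle: the asymmetry between the differential pieces for positive versus negative modes is engineered precisely so that the doubly-negative bracket assembles correctly only after the global linear relation \eqref{eqn1} is invoked. The two hypotheses thus play complementary roles: \eqref{eqn2} handles the doubly-positive case (where no cross commutator is available at all), while \eqref{eqn1} mediates between first- and second-differences in the remaining cases.
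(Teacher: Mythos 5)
Your proposal is correct and follows essentially the same route as the paper: expand each $\Phi(b_i(\mathbf m))$ into its differential, multiplication, and scalar pieces, reduce the bracket to the cross commutators $[\partial_{y_k(\mathbf s)},y_l(\mathbf t)]=\delta_{k,l}\delta_{\mathbf s,\mathbf t}$, and invoke \eqref{eqn1} to convert the $n_p$-sums into $m_p$-sums (merging two first differences into $-A_{ij}$) and \eqref{eqn2} to kill the right-hand side in the doubly positive case. The only cosmetic difference is that you organize the computation as an explicit case split on the signs of $\mathbf m,\mathbf n$ where the paper carries the $\theta$-factors through a single calculation, and you spell out the central relation and the $i=0$ reduction via $A_{0j}=-\sum_{i=1}^n A_{ij}$, which the paper leaves as ``straightforward.''
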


 \begin{proof}  For $1<i,j\leq n$ we have
\begin{align*}
[\Phi&\big(b_{i}(\mathbf m)\big)  ,\Phi\big(b_{j}(\mathbf n)\big) ] 
 =\bigg[ \theta(-\mathbf m)\sum_{p=0}^{N}\sum_{\mathbf s>0}\bigg(\partial_{y_{i-1}(\mathbf s)}   -  \partial_{y_{i}(\mathbf s)}\bigg)  m_{p}\kappa_{\mathbf{-m+s},p} \\
  &+ \theta(\mathbf{m})\sum_{p=0}^{N}\sum_{\mathbf s>0} \bigg(\partial_{y_{i-1}(\mathbf s)}  - 2 \partial_{y_{i}(\mathbf s)}  
   +  \partial_{y_{i+1}(\mathbf s)}  \bigg) \mathbf m_{p}\kappa_{\mathbf{-m+s},p}  
  + \theta(-\mathbf m)y_{i}(-\mathbf m),  \enspace   \theta(-\mathbf n)y_{j}(-\mathbf n)\bigg]\\
 &+\bigg[ \theta(-\mathbf m)y_{i}(-\mathbf m) ,  \enspace   \theta(-\mathbf n)\sum_{q=0}^{N}\sum_{\mathbf t>0}\bigg(\partial_{y_{j-1}(\mathbf t)}  -  \partial_{y_{j}(\mathbf t)}\bigg) \mathbf n_{q}\kappa_{\mathbf{-n+t},q}\\
& + \theta(\mathbf{n})\sum_{q=0}^{N}\sum_{\mathbf t>0} \bigg(\partial_{y_{j-1}(\mathbf t)}  - 2 \partial_{y_{j}(\mathbf t)}  +  \partial_{y_{j+1}(\mathbf t)}  \bigg) \mathbf n_{q}\kappa_{\mathbf{-n+t},q} 
  + \theta(-\mathbf n)y_{j}(-\mathbf n) \bigg]  \\
 =& \theta(-\mathbf n)\theta(-\mathbf m)\bigg(\delta_{j,i-1}  - \delta_{j,i}\bigg) \sum_{p=0}^{N}m_p\kappa_{\mathbf{-m-n},p} 
  + \theta(-\mathbf n)\theta(\mathbf{m})\bigg(\delta_{j,i-1} - 2\delta_{i,j}  +\delta_{j,i+1}  \bigg)  \sum_{p=0}^{N} m_p\kappa_{\mathbf{-m-n},p}     \\  
 & - \theta(-\mathbf m)\theta(-\mathbf n)\bigg(\delta_{i,j-1} - \delta_{i,j}\bigg)\sum_{q=0}^{N} n_q\kappa_{\mathbf{-m-n},q} 
  -\theta(-\mathbf m)\theta(\mathbf{n})\bigg(\delta_{i,j-1}  - 2\delta_{i,j} +\delta_{i,j+1} \bigg) \sum_{q=0}^{N} n_q\kappa_{\mathbf{-m-n},q} \\  
 =&\bigg(\theta(\mathbf n)\theta(-\mathbf m) + \theta(-\mathbf n)\theta(\mathbf{m})
 +\theta(-\mathbf m)\theta(-\mathbf{n}) \bigg)\bigg(\delta_{j,i-1} - 2\delta_{i,j} +\delta_{j,i+1}  \bigg)  \sum_{p=0}^{N} m_p\kappa_{\mathbf{-m-n},p}  \\  
 =&\bigg(\delta_{j,i-1} - 2\delta_{i,j}  + \delta_{j,i+1}  \bigg)  \sum_{p=0}^{N} m_p\kappa_{\mathbf{-m-n},p}
 =-A_{ij}  \sum_{p=0}^{N} m_p\kappa_{\mathbf{-m-n},p}.
   \end{align*}
where in the last two equalities we used the hypothesis \eqnref{eqn1} and \eqnref{eqn2} respectively. The remaining relations are also straightforward. \end{proof}

\section{Main Result, the representation of $\tau(A_n)$}
Let $i,j \leq n+1$ and $\mathbf m \in \mathbb Z^{N+1}$
$$
\mathbb C[ \mathbf x   ]:= \mathbb C[x_{ij}(\mathbf m)| 0< i< j\leq  n+1 , \mathbf m \in \mathbb Z^{N+1}] 
$$
  The elements $x_{ij}(\mathbf m)$ act via multiplication on the ring $\mathbb C[\mathbf x]$, and hence on the ring $\mathbb C[\mathbf x]\otimes \mathbb C[\mathbf y]$ (as $x_{ij}(\mathbf m)\otimes 1$).  Define the following differential operators acting on the polynomial ring $\mathbb C[\mathbf x]\otimes \mathbb C[\mathbf y]$.
\begin{equation}\label{normalordering}
a_{ij, \bf m}:= -x_{ij}(\mathbf m), \quad a_{ij,\mathbf m}^* := \frac {\partial}{  \partial x_{ij}(-\mathbf  m)}.
\end{equation}
 With corresponding generating functions:
$$
a_{ij} (\mathbf z) = \sum_{\mathbf m \in \mathbb Z^{N+1}} a_{ij, \bf m} \mathbf z^{-\mathbf m}, \quad
a^*_{ij} (\mathbf z) = \sum_{\mathbf m \in \mathbb Z^{N+1}} a^*_{ij, \bf m} \mathbf z^{-\mathbf m}, $$
$$
\kappa_s(\mathbf z) = \sum_{\mathbf m \in \mathbb Z^{N+1}} \kappa_{ \mathbf m, s} \mathbf z^{\mathbf m} 
$$
Define the operators
\begin{alignat}{2}
\\
\kappa(\mathbf z)\cdot D&= \sum_{i=0}^N \kappa_i(\mathbf z) {\frac{\partial} {\partial z_i}},  
\hskip 30pt& \kappa (\mathbf z) & =\sum_{i=0}^N\kappa_i(\mathbf z) =  \sum_{i=0}^N\sum_{\mathbf m} \kappa_{ \mathbf m,i} \mathbf z^{\mathbf m}z_i.\notag
\end{alignat}
Note that $K(\mathbf z)\cdot D$ is a weighted version of Euler's differential operator.  The operators $\Phi(b_i)$ commute with the $a_{ij,\mathbf m}, a_{ij,\mathbf m}^*  $ and act on $\mathbb C[\mathbf x]\otimes \mathbb C[\mathbf y]$ as $1 \otimes \Phi(b_i)$.

\begin{thm}[Realization]  Let $\kappa_{\mathbf m,l}$ be fixed complex numbers satisfying conditions \eqnref{eqn1} and \eqnref{eqn2} and fix $\lambda_i\in\mathbb C$ for $0\leq i\leq n$.
Then the generating functions given below 
\begin{align*}
\rho(F_{r})(\mathbf z) &=a_{r,r+1} (\mathbf z) -\sum_{j=1}^{r-1}a_{j,r+1}(\mathbf z) a_{jr}^*( \mathbf z),
\\
\rho(H_r  )(\mathbf z) &=2a_{r,r+1}(\mathbf z) a_{r,r+1}^ *(\mathbf z)+\sum_{i=1}^{r-1}\left(
        a_{i,r+1}(\mathbf z)a_{i,r+1}^*(\mathbf z)-a_{ir}(\mathbf z) a_{ir}^*(\mathbf z) \right) \\
        &\quad+\sum_{j=r+2}^{n+1}\left(a_{rj}(\mathbf z) a_{rj}^*(\mathbf z)
       - a_{r+1,j}(\mathbf z) a_{r+1,j}^*(\mathbf z)\right)+\Phi(b_r)(\mathbf z), \\
\rho(E_{r} )(\mathbf z) &= a_{r,r+1}(\mathbf z)a_{r,r+1}^*(\mathbf z) a_{r,r+1}^*(\mathbf z)\\
    &\quad
      -\sum_{j=r+2}^{n+1} a_{r+1,j}(\mathbf z) a_{rj}^*(\mathbf z)+\sum_{j=1}^{r-1}
      a_{jr}(\mathbf z)a^*_{j,r+1}(\mathbf z)\\
    &\quad+\sum_{j=r+2}^{n+1} \left(
    a_{rj}(\mathbf z)a_{rj}^*(\mathbf z)
        -a_{r+1,j}(\mathbf z)a_{r+1,j}^*(\mathbf z) \right)a_{r,r+1}^*(\mathbf z)\\
    &\quad+a_{r,r+1}^* (\mathbf z)\Phi(b_r)(\mathbf z)+\kappa\cdot Da_{r,r+1}^*(\mathbf z),
\end{align*}
for $1\leq r\leq n$, together with 
\begin{align*}
\rho(E_{0}  )(\mathbf z)  &=-a_{1,n+1}(\mathbf z),
\\
\rho(H_0   ) (\mathbf z) &=-\sum_{r=1}^n\rho(H_r)(\mathbf z)= -\sum_{r=1}^{n} 
        a_{r,n+1}(\mathbf z)a_{r,n+1}^*(\mathbf z)  -\sum_{r=2}^{n+1} (\mathbf z)a_{1r}(\mathbf z) a_{1r}^*(\mathbf z)
         +\Phi(b_0)(\mathbf z), \\
\rho(F_{0}   ) (\mathbf z) & =\sum_{1\leq r<j  \leq n+1} -a_{rj} (\mathbf z) \sum_{\mathbf q;j= q_i;r\geq q_{i-1}} \prod_{l=1}^{i-1} a_{q_l q_{l+1}}^* (\mathbf z) a_{r,n+1}^* (\mathbf z) \\
&\quad -\sum_{1 \leq r <n+1}\sum_{ r \geq  q_i,\mathbf q} 
    \prod_{j=1}^{i} a_{q_{j}q{_{j +1} }}^*(\mathbf z)  \Phi(b_r) (\mathbf z)   \\ 
&\quad -\sum_{1\leq r <n+1}\sum_{ r=q_i,\mathbf q} 
   \prod_{j=1}^{i -1}a_{q_{j}q{_{j +1} }}^* (\mathbf z) \kappa \cdot Da_{r,n+1}^* (\mathbf z)
\end{align*} 
defines an action of the generators
$E_{r}(\mathbf m)$, $F_{r}(\mathbf m)$ a
nd
$H_{r}(\mathbf m)$ on the Fock space $\mathbb C[\mathbf x]\otimes
\mathbb C[\mathbf y]$ (notation given earlier). In the partitions above $ 1=q_1<q_2<\dots <q_i,  q_{i+1} = n+1$.
In addition $K_{\mathbf m,l}$ acts as left multiplication by $-\kappa_{-\mathbf m,l} $ .
\end{thm}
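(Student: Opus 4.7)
The plan is to verify the defining relations (R0)--(R4) of $\tau(A_n)$ one at a time for the generating series $\rho(E_r)(\mathbf z),\rho(F_r)(\mathbf z),\rho(H_r)(\mathbf z)$, working in the ${\boldsymbol \lambda}$-bracket calculus of Section~2. First I would record the basic brackets among the building blocks. From \eqnref{normalordering} one has
$$[a_{ij}(\mathbf z)_{\boldsymbol \lambda} a_{kl}(\mathbf w)]=0=[a^*_{ij}(\mathbf z)_{\boldsymbol \lambda} a^*_{kl}(\mathbf w)],\qquad [a^*_{ij}(\mathbf z)_{\boldsymbol \lambda} a_{kl}(\mathbf w)]=-\delta_{ik}\delta_{jl};$$
the operators $\Phi(b_r)$ act on the second tensor factor and so commute with every $a_{ij},a^*_{ij}$; \propref{bosonprop} together with the derivation analogous to the passage (1)$\to$(R1) yields $[\Phi(b_i)(\mathbf z)_{\boldsymbol \lambda}\Phi(b_j)(\mathbf w)]=-A_{ij}\kappa(\mathbf w)\cdot D$; and each $\kappa_s(\mathbf z)$ is a scalar series. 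The Leibniz rule~\eqnref{rule2} then reduces any bracket $[\rho(X)_{\boldsymbol \lambda}\rho(Y)]$ to a finite sum of contractions, each involving at most one pairing of an $a^*$ with an $a$.

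\textbf{(R0)--(R2).} Relation (R0) is immediate: $K_{\mathbf m,l}$ acts by the scalar $-\kappa_{-\mathbf m,l}$ and $D\cdot K(\mathbf z)=0$ is exactly \eqnref{eqn1}. For (R1) the cross brackets of the $aa^*$ pieces of $\rho(H_i)$ and $\rho(H_j)$ telescope to zero as in the classical Wakimoto Cartan calculation, and the surviving $[\Phi(b_i)_{\boldsymbol \lambda}\Phi(b_j)]$ supplies the required $-A_{ij}\kappa(\mathbf w)\cdot D$. For (R2) bracketing each monomial of $\rho(E_j)$ (resp.\ $\rho(F_j)$) with the quadratic part of $\rho(H_i)$ reads off its $A_{ij}$-weight; the $\Phi(b_i)$ part of $\rho(H_i)$ commutes with all $a,a^*$ and contributes nothing to the right-hand side.

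\textbf{(R3).} I would split on whether $i=j$. If $i\neq j$ with $1\leq i,j\leq n$, the contractions between monomials in $\rho(E_i)$ and $\rho(F_j)$ cancel in pairs, again as in the finite-type Wakimoto case. If $i=j$, the cubic piece $a_{i,i+1}a^*_{i,i+1}a^*_{i,i+1}$ combines with the various double-contraction and $a^*_{i,i+1}\Phi(b_i)$ contributions and reassembles, using \propref{bosonprop}, into $-\rho(H_i)(\mathbf w)\delta(\mathbf w/\mathbf z)$, while the extra summand $\kappa\cdot D\,a^*_{i,i+1}(\mathbf z)$ in $\rho(E_i)$ produces $-\kappa(\mathbf w)\cdot D\,\delta(\mathbf w/\mathbf z)$ via \eqnref{IDlambda}; together these match (R3) since $A_{ii}=2$. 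The cases involving $E_0$ and $F_0$ are treated by the same bookkeeping, using that $\rho(E_0)=-a_{1,n+1}$ is linear so that only those summands of $\rho(F_0)$ whose outermost $a^*$ is $a^*_{1,n+1}$ contract nontrivially.

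\textbf{(R4) --- the main obstacle.} The relations $[\rho(E_i)_{\boldsymbol \lambda}\rho(E_i)]=0=[\rho(F_i)_{\boldsymbol \lambda}\rho(F_i)]$ for $1\leq i\leq n$ and their Serre relations follow, as in classical Wakimoto, from careful cancellation of ``contraction trees''. The substantial new work is (R4) involving the affine simple root, namely $[\rho(F_0)_{\boldsymbol \lambda}\rho(F_0)]=0$ together with $\text{ad}\,\rho(F_0)^2\rho(F_1)=0$ and $\text{ad}\,\rho(F_0)^2\rho(F_n)=0$, because of the chains $\mathbf q=(1=q_1<q_2<\dots<q_i<q_{i+1}=n+1)$ parametrising the summands of $\rho(F_0)$. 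My plan is to index each summand by its chain and then show that every ${\boldsymbol \lambda}$-bracket between two chains either joins them at a shared vertex or splits one of them at $r$, producing contributions that pair off under the obvious shuffle involution on chains. The hypotheses \eqnref{eqn1}--\eqnref{eqn2} on $\kappa_{\mathbf m,p}$ are precisely what allows the $\kappa\cdot D$ contributions to reassemble and cancel. This combinatorial cancellation is where the bulk of the computation lies and is the step I expect to be the main obstacle.
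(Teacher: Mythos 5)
Your overall strategy is the same as the paper's: pass to the ${\boldsymbol\lambda}$-bracket form of (R0)--(R4), record the elementary contractions among $a_{ij}$, $a^*_{ij}$, $\kappa\cdot D a^*_{ij}$ and $\Phi(b_r)$, and then verify each relation by case analysis, with the genuinely new work concentrated in the brackets involving $\rho(F_0)$, which the paper also handles by indexing summands by chains $\mathbf q$ and cancelling in pairs (plus a few closed-form identities in the partial sums $\mathcal B(t),\mathcal C(t)$). So the architecture is right. However, there is one concrete error that would derail the verification of (R2): you assert that the $\Phi(b_i)$ summand of $\rho(H_i)$ ``commutes with all $a,a^*$ and contributes nothing to the right-hand side.'' It does commute with all $a_{kl},a^*_{kl}$, but $\rho(E_j)$ contains the summands $a^*_{j,j+1}\Phi(b_j)$ and $\kappa\cdot D a^*_{j,j+1}$, and the bracket of the quadratic part of $\rho(H_i)$ with $\kappa\cdot D a^*_{j,j+1}$ produces, besides the wanted $\kappa\cdot D a^*_{j,j+1}$ term, an anomalous term $a^*_{j,j+1}\,\kappa\cdot{\boldsymbol\lambda}$ (this is exactly identity \eqnref{eq:kdw} of Lemma~\ref{lemks}). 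That anomaly is cancelled precisely by $[\Phi(b_i)_{\boldsymbol\lambda}\,a^*_{j,j+1}\Phi(b_j)]=-A_{ij}a^*_{j,j+1}\,\kappa\cdot{\boldsymbol\lambda}$; compare equations \eqnref{eq:HE1} and \eqnref{eq:HE4} in the paper, which sum to zero in the $\kappa\cdot{\boldsymbol\lambda}$ sector. If you drop the $\Phi(b_i)$ contribution as you propose, you obtain $[\rho(H_r)_{\boldsymbol\lambda}\rho(E_r)]=2\rho(E_r)+2a^*_{r,r+1}\kappa\cdot{\boldsymbol\lambda}$ rather than $2\rho(E_r)$, and the relation fails. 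The same mechanism is needed in $[\rho(H_k)_{\boldsymbol\lambda}\rho(F_0)]$, since $\rho(F_0)$ also carries $\Phi(b_r)$ and $\kappa\cdot D$ summands.

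Two smaller points. First, the Heisenberg bracket should read $[\Phi(b_i)_{\boldsymbol\lambda}\Phi(b_j)]=-A_{ij}\,\kappa\cdot{\boldsymbol\lambda}$ (a polynomial in ${\boldsymbol\lambda}$), not $-A_{ij}\,\kappa(\mathbf w)\cdot D$; writing the output of a ${\boldsymbol\lambda}$-bracket as a differential operator conflates the commutator form $K_l(\mathbf w)\partial_{w_l}\delta(\mathbf z/\mathbf w)$ with its Fourier transform, and this same confusion feeds the error above. Second, the hypotheses \eqnref{eqn1}--\eqnref{eqn2} are consumed earlier than you suggest: they are what make $\Phi$ a representation of $\mathfrak B$ (Proposition~\ref{bosonprop}) and what make the two orderings in Lemma~\ref{prelimlem}(c) agree; the final cancellations in the $F_0$ Serre relations are formal identities over $\mathbb Z$ that do not use them again.
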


Note it appears that one should also have for $k<l$,
$$
\rho(E_{lk})=-a_{kl}+\sum_{j=1}^{k-1}a_{jl}a_{jk}^*
$$
but we don't seem to need this general formula, so we don't determine whether it is always true. 

\section{Proof of the main result}

We should point out that the proof requires very lengthy (at least to us) calculations. We have selected representative portions of the calculations to include here, from an original manuscript of over one hundred pages. Calculations similar to those omitted can be found in \cite{MR2003g:17034} and \cite{C2}, students may also wish to specialize to the special cases of  cases of type $A_2$ and $A_3$ especially the latter which is a good guide for the general setting of $n\geq 2$.

Let
$$
\Phi(b_r):=\Phi(b_r)(\mathbf z)=\sum_{\mathbf m}\Phi(b_r)(\mathbf m)\mathbf z^{-\mathbf m}
$$
then we can write the last calculation in the proof of \propref{bosonprop}  as
$$
[\Phi(b_r)_{\boldsymbol \lambda}\Phi(b_s)]=A_{rs} \sum_{l=0}^N\rho(K_l)  {\lambda}_l=-A_{rs}\kappa\cdot {\boldsymbol \lambda}.
$$
Set 
$$
K_l(\mathbf w):=\left(\sum_{\mathbf r }K_{\mathbf r, l} w^{-\mathbf r}\right)w_l.
$$
Relations (R0) are satisfied by definition of the $\kappa_{\mathbf m, l}$. 
The relations (R1)-(R4) will follow if the following ${\boldsymbol \lambda}$-brackets are satisfied 
\begin{enumerate}
\label{relationT}
\item [(T1)] $\left[\rho(H_i)(\mathbf w )_{\boldsymbol \lambda}\rho(H_j)(\mathbf w) \right]=A_{ij} \sum_{l=0}^N\rho(K_l) (\mathbf w) \lambda_l$ ($0\leq i,j\leq n$);  \\ 
\item [(T2)] $\left[\rho(H_i)(\mathbf w ){_{\boldsymbol \lambda}}\rho(E_j)(\mathbf w )\right]=A_{ij}\rho(E_{j })(\mathbf w )$, \\
 	$\left[\rho(H_i)(\mathbf w ){_{\boldsymbol \lambda}} \rho(F_{j}) (\mathbf w )\right]=-A_{ij}\rho(F_{j}) (\mathbf w )$;  \\
\item [(T3)] $\displaystyle{\left[ \rho(E_{i})(\mathbf w ){_{\boldsymbol \lambda}}\rho(F_{j})(\mathbf w )\right]=-\delta_{i,j}\left(\rho(H_{i})(\mathbf w )+\frac{2}{A_{ij}}\sum_{l=0}^N\rho(K_{l})(\mathbf w )\lambda_l\right)}$;  \\
\item[(T4)] \label{relation T4}
\subitem{i.} $ \left[ \rho(E_{i})(\mathbf w )_{\boldsymbol \lambda}\rho(E_{j } )(\mathbf w )\right]=0= \left[\rho( F_{i})(\mathbf w )_{\boldsymbol \lambda}\rho(F_{j})(\mathbf w )\right]$, if $|i-j|\neq 1$. 
\subitem{ii.}
 $[\rho(E_{i }(\mathbf w ){_{\boldsymbol \lambda}} )[\rho(E_{i})(\mathbf w )_{\boldsymbol \mu }\rho(E_{j})(\mathbf w )]]=0$ if $i=j\pm 1$.
 \subitem{iii.}
 $[\rho(F_{i })(\mathbf w ){_{\boldsymbol \lambda}} [\rho(F_{i})(\mathbf w )_{\boldsymbol \mu }\rho(F_{j})(\mathbf w )]]=0$ if $i=j\pm 1$.\end{enumerate}
\begin{proof}
We demonstrate how to write relation (R1) in ${\boldsymbol \lambda}$-bracket form:
\begin{align*}
\sum_{\mathbf m,\mathbf n}
[H_i(\mathbf m),H_j(\mathbf n)]\mathbf z^{-\mathbf m}\mathbf w^{-\mathbf n}
 &=A_{ij} \sum_{l=0}^N  K_{ l}(\mathbf w)\partial_{w_l}\delta(\mathbf z/\mathbf w) \\
 &= A_{ij} \sum_{\mathbf j \in \mathbb N^{N+1}} c^{\mathbf j} (\mathbf w) \partial^{(\mathbf j)} \delta (\mathbf z/\mathbf w) 
\end{align*}
where the $c^{\mathbf j } (\mathbf w)$ is defined as follows: we take $\mathbf e_l $ be the $N$-tuple with a $1$ in the $l$-th position an zeros elsewhere, and define $c^{\mathbf e_l} (\mathbf w) = K_l (\mathbf w)$ and $c^{\mathbf j} (\mathbf w)= 0$ if $\mathbf j \neq e_l$ for some $0\leq l \leq N$.  Applying $F^{\boldsymbol \lambda}$ gives the result.
 
\end{proof}

\subsection{Preliminary Lemmas} We have the following identities for the $a_{i,j,\mathbf m}, a_{i,j,\mathbf m}^*$ as in (\ref{normalordering}),  proofs of which carry over from \cite{C2}  Lemma 4.1. The identities are 
written which we write in terms of the $\boldsymbol \lambda$-bracket. In the interest of compressing the notation, we shall often repress the variables $\mathbf z, \mathbf w$ in the computations, especially when using the $\boldsymbol \lambda$ notation, where the presence of the multivariable $\mathbf w$ is assumed. 
 
\begin{lem}[\cite{C2}]\label{prelimlem} Let $i,j,k,l \in \mathbb Z$. Then for the generating functions $a_{ij}(\mathbf w), a_{ij}^*(\mathbf w)$ the following identities hold:
 \begin{enumerate}
\item[(a)] $ [a_{ij}(\mathbf w ){_{\boldsymbol \lambda}} a^*_{kl}(\mathbf w )]=\delta_{i,k}\delta_{j,l}$,
\item[(b)] $[a_{ij}(\mathbf w )a_{ij}^*(\mathbf w ){_{\boldsymbol \lambda}} a_{ij}(\mathbf w )a^*_{ij}(\mathbf w )]=0$ ,
\item[(c)] $[a_{ij}(\mathbf w ){_{\boldsymbol \lambda}} \kappa\cdot Da_{kl}^*(\mathbf w )]
    =\delta_{i,k}\delta_{j,l}\sum_{p=0}^N\kappa_p{\lambda}_p=[\kappa \cdot Da_{kl}^*(\mathbf w ){_{\boldsymbol \lambda}}a_{ij}(\mathbf w ) ], $ 
\item[(d)] $ \displaystyle\sum_{j=r+2}^{n+1}\sum_{k=1}^{s-1}
        \Big[a_{ks} (\mathbf w )a_{k,s+1}^*(\mathbf w ){_{\boldsymbol \lambda}}
       a_{rj}(\mathbf w )a^*_{rj}(\mathbf w )\Big]= -\delta_{s,r+1}
        a_{r,r+1}(\mathbf w )a_{r,r+2}^*(\mathbf w ) , $
\item[(e)] $\displaystyle\sum_{j=1}^{r-1}\sum_{k=s+2}^{n+1} 
        \Big[a_{s+1,k} (\mathbf w )a_{sk}^*(\mathbf w ){_{\boldsymbol \lambda}}
        a_{jr}(\mathbf w )a_{j,r+1}^*(\mathbf w )\Big]=0,$
\item[(f)] $\displaystyle\sum_{j=r+2}^{n+1}\sum_{k=1}^{s-1} 
     \Big[ a_{ks}(\mathbf w )a^*_{k,s+1}(\mathbf w ){_{\boldsymbol \lambda}}
        a_{r+1,j}(\mathbf w )a^*_{r+1,j}(\mathbf w ) \Big]  =0,$
\item[(g)] \begin{align*} \displaystyle \sum_{j=r+2}^{n+1}\sum_{k=s+2}^{n+1}
 &   \Big[ a_{s+1,k} (\mathbf w )a^*_{sk}(\mathbf w ){_{\boldsymbol \lambda}}
   \left(a_{rj} (\mathbf w )a^*_{rj} (\mathbf w )
        -a_{r+1,j} (\mathbf w )a^*_{r+1,j}(\mathbf w ) \right)
         \Big]  \\
   &  =-2\delta_{r,s}\sum_{j=r+2}^{n+1} 
       a_{r+1,j}(\mathbf w )a^*_{rj} (\mathbf w )   + \delta_{r,s+1} 
    \sum_{j=r+2}^{n+1}a_{rj}(\mathbf w )a^*_{r-1,j}(\mathbf w )  \\
    &\ +\delta_{s,r+1} 
       \sum_{j=r+3}^{n+1}
        a_{r+2,k}(\mathbf w )a^*_{r+1,j}(\mathbf w ), \end{align*} 
\item[(h)] $\sum_{j=1}^{r-1}
        \Big[a^*_{s,s+1}(\mathbf w ){_{\boldsymbol \lambda}}
        a_{jr}a^*_{j,r+1}(\mathbf w )\Big]=-
        \delta_{r,s+1}a^*_{r-1,r+1}(\mathbf w ) ,$ 
\item[(i)] $\sum_{j=r+2}^{n+1} 
        \Big[ a^*_{s,s+1}(\mathbf w ){_{\boldsymbol \lambda}}
        a_{r+1,j}(\mathbf w )a_{rj}^*(\mathbf w )\Big]=-
        \delta_{r+1,s}a^*_{r,r+2}(\mathbf w ).  $
\end{enumerate}
\end{lem}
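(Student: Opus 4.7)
\medskip

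\textbf{Proof proposal.} The plan is to reduce everything to the single canonical commutation relation in part (a), and then derive (b)--(i) mechanically by the sesquilinearity and Leibniz rules for the $\boldsymbol\lambda$-bracket. The key observation is that $a_{ij,\mathbf m}$ is multiplication by $-x_{ij}(\mathbf m)$ while $a^*_{kl,\mathbf n}$ is $\partial/\partial x_{kl}(-\mathbf n)$, so the only nontrivial elementary commutator is $[a_{ij,\mathbf m},a^*_{kl,\mathbf n}] = \delta_{i,k}\delta_{j,l}\delta_{\mathbf m,-\mathbf n}$; in particular any two of the $a$'s commute, any two of the $a^*$'s commute, and distinct ``species'' commute.

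\medskip

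First I would establish (a) directly: from the definition,
\[
[a_{ij}(\mathbf z),a^*_{kl}(\mathbf w)]
 =\sum_{\mathbf m,\mathbf n}[a_{ij,\mathbf m},a^*_{kl,\mathbf n}]\,\mathbf z^{-\mathbf m}\mathbf w^{-\mathbf n}
 =\delta_{i,k}\delta_{j,l}\sum_{\mathbf m}\mathbf z^{-\mathbf m}\mathbf w^{\mathbf m}
 =\delta_{i,k}\delta_{j,l}\,\delta(\mathbf z/\mathbf w),
\]
so applying $F^{\boldsymbol\lambda}_{\mathbf z,\mathbf w}$ and using \eqnref{IDlambda} with $\mathbf j=\mathbf 0$ yields (a). For (c), since the $\kappa_{\mathbf m,s}$ are scalars, $\kappa\cdot D$ commutes through and I get
$[a_{ij}(\mathbf z),\kappa\cdot Da^*_{kl}(\mathbf w)]=\delta_{i,k}\delta_{j,l}\sum_s\kappa_s(\mathbf w)\,\partial_{w_s}\delta(\mathbf z/\mathbf w);$
each $\partial_{w_s}\delta$ is $\partial^{(\mathbf e_s)}\delta$ and Fourier transforms to $\lambda_s$, giving $\delta_{i,k}\delta_{j,l}\sum_p\kappa_p\lambda_p$. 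The symmetric assertion follows from the skew-symmetry rule for the $\boldsymbol\lambda$-bracket. For (b), I would apply the Leibniz rule \eqnref{rule2} twice to expand $[a_{ij}a^*_{ij}{}_{\boldsymbol\lambda}a_{ij}a^*_{ij}]$; since $[a_{ij}{}_{\boldsymbol\lambda}a_{ij}]=0$ and $[a^*_{ij}{}_{\boldsymbol\lambda}a^*_{ij}]=0$, only cross terms survive and these cancel in pairs because (a) gives the same scalar $1$ on both sides.

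\medskip

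For identities (d)--(i), I would apply the Leibniz rule \eqnref{rule2} to distribute the $\boldsymbol\lambda$-bracket over the products $a_{\cdot\cdot}a^*_{\cdot\cdot}$, and then use (a) to collapse each resulting elementary bracket to a Kronecker pair. The outer sums are then controlled by which values of the summation indices are compatible with the resulting $\delta_{i,k}\delta_{j,l}$'s. Concretely, in (d) the factor $[a_{ks}{}_{\boldsymbol\lambda}a^*_{rj}]=\delta_{k,r}\delta_{s,j}$ forces $k=r$ (but $k\le s-1<s$ and $r\le s-1$, so this contributes only when $r$ is in range) and similarly the cross bracket $[a^*_{k,s+1}{}_{\boldsymbol\lambda}a_{rj}]$ forces $r=k$, $j=s+1$; reconciling the two sets of constraints with the ranges $k=1,\dots,s-1$ and $j=r+2,\dots,n+1$ produces the delta $\delta_{s,r+1}$ and the surviving term $a_{r,r+1}a^*_{r,r+2}$. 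Identities (e), (f), (g), (h), (i) are handled the same way, with (g) the longest because three separate pairings of indices survive, producing the three terms with $\delta_{r,s}$, $\delta_{r,s+1}$, $\delta_{s,r+1}$ respectively.

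\medskip

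The main obstacle will not be conceptual but bookkeeping: in each of (d)--(i) one must track carefully which of the four Leibniz terms contribute nonzero Kronecker deltas, and then intersect the constraints with the index ranges on the outer double sums. It is also necessary to be watchful about signs (the $a_{ij}$ carry a minus sign in their definition, so $[a_{ij},a^*_{kl}]$ and $[a^*_{kl},a_{ij}]$ both yield $+\delta_{i,k}\delta_{j,l}$ after the correct sign accounting) and about the fact that once (a) collapses a bracket to a scalar, that scalar commutes past the remaining operator factors so that the order $a\cdot a^*$ versus $a^*\cdot a$ in the surviving term is exactly as recorded in the statement. Since no new phenomena appear beyond what is in \cite{C2} Lemma 4.1, each identity reduces to a bounded combinatorial verification and no infinite sums or convergence issues arise.
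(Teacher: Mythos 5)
Your overall strategy---reduce everything to the elementary contraction $[a_{ij,\mathbf m},a^*_{kl,\mathbf n}]=\delta_{i,k}\delta_{j,l}\delta_{\mathbf m,-\mathbf n}$ and then expand by the Leibniz rule\eqnref{rule2}, intersecting the resulting Kronecker deltas with the outer index ranges---is exactly how (a), (b) and (d)--(i) are verified (the paper imports these from \cite{C2} Lemma 4.1 and proves only (c)). However, your treatment of the second equality in (c) has a genuine gap. You assert that $[\kappa\cdot Da^*_{kl}(\mathbf w)_{\boldsymbol\lambda}a_{ij}(\mathbf w)]=[a_{ij}(\mathbf w)_{\boldsymbol\lambda}\kappa\cdot Da^*_{kl}(\mathbf w)]$ ``follows from the skew-symmetry rule.'' It does not follow formally: skew-symmetry reads $[b_{\boldsymbol\lambda}a]=-[a_{-{\boldsymbol\lambda}-\partial}b]$, and since the coefficient of $\lambda_p$ in $[a_{ij}{}_{\boldsymbol\lambda}\kappa\cdot Da^*_{kl}]$ is the nonconstant series $\kappa_p(\mathbf w)$, the substitution $-{\boldsymbol\lambda}-\partial$ produces an extra zeroth-order term. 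Concretely, computing $[\kappa(\mathbf z)\cdot Da^*_{kl}(\mathbf z),a_{ij}(\mathbf w)]$ and moving the argument of $\kappa_p$ from $\mathbf z$ to $\mathbf w$ across $\partial_{w_p}\delta(\mathbf z/\mathbf w)$ leaves the residue
\[
\delta_{i,k}\delta_{j,l}\sum_{\mathbf n}\Big(\sum_{p=0}^N n_p\kappa_{\mathbf n,p}\Big)\mathbf w^{\mathbf n}\,\delta(\mathbf z/\mathbf w),
\]
which vanishes only because of the standing hypothesis\eqnref{eqn1} (equivalently relation (R0)). This is the one nontrivial point in the lemma and the one place the constraint on the $\kappa_{\mathbf m,p}$ is actually used; any complete proof must invoke it explicitly.

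A second, smaller slip: your parenthetical claim that $[a_{ij},a^*_{kl}]$ and $[a^*_{kl},a_{ij}]$ ``both yield $+\delta_{i,k}\delta_{j,l}$'' cannot be right, since the commutator is antisymmetric. The minus sign in $a_{ij,\mathbf m}=-x_{ij}(\mathbf m)$ gives $[a_{ij}{}_{\boldsymbol\lambda}a^*_{kl}]=+\delta_{i,k}\delta_{j,l}$ but $[a^*_{kl}{}_{\boldsymbol\lambda}a_{ij}]=-\delta_{i,k}\delta_{j,l}$. Indeed your own cancellation in (b)---and the cancellations that leave the single surviving terms in (d)--(i)---depend on these two contractions having opposite signs, so as written your sign bookkeeping contradicts the argument it is meant to support. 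With these two corrections the mechanical verification of (d)--(i) goes through as you describe.
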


\begin{proof}  Only statement (c) is new, 
\begin{align*}
[a_{ij}(\mathbf z),  \kappa (\mathbf w)\cdot Da_{kl}^*(\mathbf w)] & =\sum_{\mathbf m}\sum_{p=0}^N \sum_{\mathbf n,\mathbf q}\kappa_{\mathbf n,p} [a_{ij}(\mathbf m),
a_{kl}^*(\mathbf q)] \mathbf w^{\mathbf n}w_p{\frac{\partial} {\partial w_p}}\mathbf z^{-\mathbf m}\mathbf w^{-\mathbf q}  \\
&=\delta_{i,k}\delta_{j,l}\sum_{p=0}^N \sum_{\mathbf n}\kappa_{\mathbf n,p}  \mathbf w^{\mathbf n}\sum_{\mathbf m}m_p
\mathbf z^{-\mathbf m}\mathbf w^{\mathbf m}  \\ 
&=\delta_{i,k}\delta_{j,l}\sum_{p=0}^N \sum_{\mathbf n}\kappa_{\mathbf n,p}\mathbf w^{\mathbf n}w_p{\frac{\partial} {\partial w_p}}\delta(\mathbf z/\mathbf w).
\end{align*}
and
\begin{align*}
[\kappa (\mathbf z)\cdot & Da_{kl}^*(\mathbf z),   a_{ij}(\mathbf w)] =\sum_{\mathbf m}\sum_{p=0}^N \sum_{\mathbf n,\mathbf q}\kappa_{\mathbf n,p} [
a_{kl}^*(\mathbf q),a_{ij}(\mathbf m)]\mathbf z^{\mathbf n}\mathbf w^{-\mathbf m}z_p{\frac{\partial} {\partial z_p}} \mathbf z^{-\mathbf q}  \\
&=-\delta_{i,k}\delta_{j,l}\sum_{p=0}^N \sum_{\mathbf n}\kappa_{\mathbf n,p} \mathbf z^{\mathbf n}\sum_{\mathbf m}m_p
 \mathbf z^{\mathbf m} \mathbf w^{-\mathbf m}  \\ 
&=\delta_{i,k}\delta_{j,l}\sum_{p=0}^N \sum_{\mathbf n}\kappa_{\mathbf n,p}\mathbf z^{\mathbf n}w_p{\frac{\partial} {\partial w_p}}\delta(\mathbf z/\mathbf w)  \\ 
 &=\delta_{i,k}\delta_{j,l}\sum_{p=0}^N \sum_{\mathbf n}\kappa_{\mathbf n,p}\mathbf w^{\mathbf n}w_p{\frac{\partial} {\partial w_p}}\delta(\mathbf z/\mathbf w)+\delta_{i,k}\delta_{j,l}\sum_{p=0}^N w_p{\frac{\partial} {\partial w_p}}\left( \sum_{\mathbf n}\kappa_{\mathbf n,p}\mathbf w^{\mathbf n}\right)\delta(\mathbf z/\mathbf w) \\
 &=\delta_{i,k}\delta_{j,l}\sum_{p=0}^N \sum_{\mathbf n}\kappa_{\mathbf n,p}\mathbf w^{\mathbf n}w_p{\frac{\partial} {\partial w_p}}\delta(\mathbf z/\mathbf w)+\delta_{i,k}\delta_{j,l} \sum_{\mathbf n}\left(\sum_{p=0}^Nn_p\kappa_{\mathbf n,p}\right)\mathbf w^{\mathbf n}
 \delta(\mathbf z/\mathbf w)  \\ 
&=\delta_{i,k}\delta_{j,l}\sum_{p=0}^N \sum_{\mathbf n}\kappa_{\mathbf n,p}\mathbf w^{\mathbf n}w_p{\frac{\partial} {\partial w_p}}\delta(\mathbf z/\mathbf w) 
\end{align*}
by the relation (R0).
Now we take the Fourier $F^{\boldsymbol \lambda}_{\mathbf z, \mathbf w}$ transform of the above, obtaining
\begin{align*}
[a_{ij}{_{\boldsymbol \lambda}}\kappa\cdot Da_{kl}^*]&=[\kappa \cdot Da_{kl}^*{_{\boldsymbol \lambda}}a_{ij} ]=\delta_{i,k}\delta_{j,l}\sum_{p=0}^N\kappa_{p}{\lambda}_p=\delta_{i,k}\delta_{j,l} \kappa\cdot {\boldsymbol \lambda}.
\end{align*}

\end{proof}

In addition, the following consequences of Lemma \ref{prelimlem} are useful
\begin{lem}
The following identities hold
\begin{equation}  \label{c1}
\left[a_{mn} (\mathbf w ){a_{mn}^*}(\mathbf w )_{{\boldsymbol \lambda}}   a_{j s+1}(\mathbf w ) a_{js}^*(\mathbf w )\right] 
= \delta_{mj}\begin{cases} 
-a_{j ,s+1}(\mathbf w ) a_{js}^* (\mathbf w ) \text{if } n = s+1 \\
a_{j ,s+1}(\mathbf w ) a_{js}^*(\mathbf w )\text{  if  }  n = s 
\end{cases}
\end{equation}

\begin{equation}  
[a_{ij} (\mathbf w ) {a_{ij}^*(\mathbf w )}_{\boldsymbol \lambda} \kappa  D  a_{mn}^*(\mathbf w ) ] = \delta_{im}\delta_{jn}  \kappa \cdot  D a_{ij}^* (\mathbf w)   +  
\delta_{im}\delta_{jn}a_{ij}^*(\mathbf w) \kappa \cdot {\boldsymbol \lambda}
\label{eq:kdw}
\end{equation}
\label{lemks}\end{lem}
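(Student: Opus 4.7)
The plan is to prove both identities at the level of formal commutators $[A(\mathbf z),B(\mathbf w)]$ and only then apply the Fourier transform $F^{\boldsymbol \lambda}_{\mathbf z,\mathbf w}$. The tools required are: the Leibniz rules $[XY,Z]=X[Y,Z]+[X,Z]Y$ and $[X,YZ]=[X,Y]Z+Y[X,Z]$; the elementary commutators $[a_{ij}(\mathbf z),a_{kl}^*(\mathbf w)]=\delta_{ik}\delta_{jl}\delta(\mathbf z/\mathbf w)$ together with the vanishing of $[a_{ij}(\mathbf z),a_{kl}(\mathbf w)]$ and $[a_{ij}^*(\mathbf z),a_{kl}^*(\mathbf w)]$; the substitution property $f(\mathbf z)\delta(\mathbf z/\mathbf w)=f(\mathbf w)\delta(\mathbf z/\mathbf w)$; and its derivative cousin $f(\mathbf z)\partial_{w_p}\delta(\mathbf z/\mathbf w)=f(\mathbf w)\partial_{w_p}\delta(\mathbf z/\mathbf w)+(\partial_{w_p}f)(\mathbf w)\delta(\mathbf z/\mathbf w)$.

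For identity (\ref{c1}), I would expand $[a_{mn}(\mathbf z)a_{mn}^*(\mathbf z),a_{j,s+1}(\mathbf w)a_{js}^*(\mathbf w)]$ via the two Leibniz rules. Of the four resulting elementary brackets only $[a_{mn}^*(\mathbf z),a_{j,s+1}(\mathbf w)]=-\delta_{mj}\delta_{n,s+1}\delta(\mathbf z/\mathbf w)$ and $[a_{mn}(\mathbf z),a_{js}^*(\mathbf w)]=\delta_{mj}\delta_{ns}\delta(\mathbf z/\mathbf w)$ are possibly nonzero, which immediately forces the dichotomy $n=s+1$ versus $n=s$ (the bracket vanishes otherwise). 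In each surviving term I would move the residual factor $a_{mn}(\mathbf z)$ or $a_{mn}^*(\mathbf z)$ across the delta via the substitution identity so that everything is evaluated at $\mathbf w$; then $F^{\boldsymbol \lambda}_{\mathbf z,\mathbf w}\delta(\mathbf z/\mathbf w)=1$ delivers the two signs stated.

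For identity (\ref{eq:kdw}) the expansion of $[a_{ij}(\mathbf z)a_{ij}^*(\mathbf z),\kappa(\mathbf w)\cdot D a_{mn}^*(\mathbf w)]$ collapses to a single term, since $a_{ij}^*(\mathbf z)$ commutes with every starred operator, including $\kappa(\mathbf w)\cdot D a_{mn}^*(\mathbf w)$. The remaining bracket was already computed in the proof of \lemref{prelimlem}(c) and equals $\delta_{im}\delta_{jn}\,\kappa(\mathbf w)\cdot D\,\delta(\mathbf z/\mathbf w)$, so I am left with $\delta_{im}\delta_{jn}\,\kappa(\mathbf w)\cdot D\,\delta(\mathbf z/\mathbf w)\,a_{ij}^*(\mathbf z)$. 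Applying the derivative--delta identity with $f=a_{ij}^*$ splits this into $a_{ij}^*(\mathbf w)\,\kappa(\mathbf w)\cdot D\,\delta(\mathbf z/\mathbf w)$ plus $\kappa(\mathbf w)\cdot D\,a_{ij}^*(\mathbf w)\,\delta(\mathbf z/\mathbf w)$. Taking $F^{\boldsymbol \lambda}_{\mathbf z,\mathbf w}$ and using $F^{\boldsymbol \lambda}\partial_{w_p}\delta(\mathbf z/\mathbf w)=\lambda_p$ (and $F^{\boldsymbol \lambda}\delta=1$) then produces the two summands on the right-hand side of (\ref{eq:kdw}) in order.

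The main obstacle I anticipate is the bookkeeping of the derivative--delta identity when $a_{ij}^*(\mathbf z)$ is transported across $\kappa(\mathbf w)\cdot D\,\delta(\mathbf z/\mathbf w)$: it is precisely the correction term $(\partial_{w_p}a_{ij}^*)(\mathbf w)\delta(\mathbf z/\mathbf w)$, summed against $\kappa_p(\mathbf w)$, that yields the $\kappa\cdot D\,a_{ij}^*(\mathbf w)$ summand, while the uncorrected part yields the $a_{ij}^*(\mathbf w)\,\kappa\cdot{\boldsymbol \lambda}$ summand. Everything else is a mechanical application of sesquilinearity and of the elementary brackets already recorded in \lemref{prelimlem}.
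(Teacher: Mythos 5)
Your proposal is correct and, for identity~(\ref{eq:kdw}), follows essentially the same route as the paper: reduce to the commutator computed in Lemma~\ref{prelimlem}(c) (the $a_{ij}^*(\mathbf z)$ factor being a spectator), transport $a_{ij}^*(\mathbf z)$ across $\kappa(\mathbf w)\cdot D\,\delta(\mathbf z/\mathbf w)$ picking up the derivative correction term, and apply $F^{\boldsymbol\lambda}_{\mathbf z,\mathbf w}$. The paper leaves identity~(\ref{c1}) to the reader, and your Leibniz-plus-delta-substitution computation for it is the intended argument and gives the right signs.
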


 \begin{proof}
We prove only the second relation (\ref{eq:kdw}) and leave the other  to the
reader. By Lemma \ref{prelimlem} (c) and by properties of $\delta (\mathbf z/\mathbf w)$   one has
\begin{align*}
[a_{ij}(\mathbf z) a_{ij}^*(\mathbf z) &, \kappa_w D_w a_{mn}^*(\mathbf w) ] = \delta_{im}\delta_{jn}a_{ij}^*(\mathbf z)  \kappa_w D_w 
\delta(\mathbf z/\mathbf w)\\
=&   \delta_{im}\delta_{jn}\kappa_w D_w (  a_{ij }^*(\mathbf z)\delta(\mathbf z/\mathbf w)) \\
 =&  \delta_{im}\delta_{jn}\kappa_w D_w (  a_{ij }^*(\mathbf w)\delta(\mathbf z/\mathbf w)) \\
 = &  \delta_{im}\delta_{jn} (\kappa_w D_w a_{ij}^*(\mathbf w))  \delta(\mathbf z/\mathbf w)  +  \delta_{im}\delta_{jn} a_{ij}^*(\mathbf w) \kappa_w D_w \delta(\mathbf z/\mathbf w)).  
\end{align*}
Note that the formal multivariable in the series $a_{ij} (\mathbf z)$ is not affected by the operator $\kappa (\mathbf w) D_w$ which acts on series in $\mathbf w$.  Applying the transform $F^{{\boldsymbol \lambda}}_{\mathbf z, \mathbf w}$ gives the result.
 \end{proof}

\subsection{Relations involving the $H_i (\mathbf z)$}
The  relations (T1) and (T2) are simpler to verify than those of type (T3) and (T4), so we begin with them. A reader familiar with other free field representations or vertex algebras can verify relation (T1) as an excercise (see also \cite{MR2003g:17034}). 
Because of the definition of the $a_{ij}$ and $a_{ij}^*$ given in \eqnref{normalordering} there are no multiple contractions when computing out the operator product expansion for these terms. To further compress the notation, we sometimes omit the multivariable $ \mathbf w$ in our computations when the variable is clear from the context.

\begin{lem}[T2]  
$$
[\rho(H_r)(\mathbf w )_{\boldsymbol \lambda}\rho(E_s)(\mathbf w )]=A_{rs}\rho(E_s)(\mathbf w ).
$$
\end{lem}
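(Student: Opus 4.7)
The plan is to decompose both $\rho(H_r)$ and $\rho(E_s)$ into their summands, apply the Leibniz rule (\ref{rule2}) (and skew symmetry) to distribute $[\cdot{}_{\boldsymbol\lambda}\cdot]$ across each product, then evaluate the resulting elementary brackets using Lemmas \ref{prelimlem} and \ref{lemks}. I would write $\rho(H_r) = H^{(1)} + H^{(2)} + H^{(3)} + \Phi(b_r)$ with $H^{(1)} = 2a_{r,r+1}a_{r,r+1}^*$, $H^{(2)} = \sum_{i=1}^{r-1}(a_{i,r+1}a_{i,r+1}^* - a_{ir}a_{ir}^*)$ and $H^{(3)} = \sum_{j=r+2}^{n+1}(a_{rj}a_{rj}^* - a_{r+1,j}a_{r+1,j}^*)$, and label the six summands of $\rho(E_s)$ as $E^{(1)},\dots,E^{(6)}$ in the order given in the statement.

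Since $\Phi(b_r)$ commutes with every $a_{ij}(\mathbf w)$, $a_{ij}^*(\mathbf w)$ and with the scalar operator $\kappa\cdot D$, the only pairing of the $\Phi(b_r)$ summand that survives is
\[
[\Phi(b_r){}_{\boldsymbol\lambda}E^{(5)}] = a_{s,s+1}^*[\Phi(b_r){}_{\boldsymbol\lambda}\Phi(b_s)] = -A_{rs}a_{s,s+1}^*\kappa\cdot{\boldsymbol\lambda},
\]
using the bosonic bracket recorded at the start of the proof of the main theorem. For the remaining pairings the key observation is that Lemma \ref{prelimlem} (d)--(i), together with Lemma \ref{lemks}, are engineered so that for each $E^{(j)}$ only the contractions with $r - s \in \{0, \pm 1\}$ survive, and they produce $E^{(j)}$ with coefficient $2\delta_{rs} - \delta_{r,s+1} - \delta_{r,s-1} = A_{rs}$.

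Concretely: only $H^{(1)}$ pairs nontrivially with $E^{(1)} = a_{s,s+1}(a_{s,s+1}^*)^2$, giving $2\delta_{rs}E^{(1)}$ via parts (a), (b) of Lemma \ref{prelimlem}; identities (d), (f), (g) collapse the double sums in $[(H^{(2)} + H^{(3)}){}_{\boldsymbol\lambda}(E^{(2)} + E^{(3)} + E^{(4)})]$ directly into $A_{rs}$ times the corresponding $E^{(j)}$; for $E^{(5)} = a_{s,s+1}^*\Phi(b_s)$, parts (a), (h), (i) yield $[(H^{(1)} + H^{(2)} + H^{(3)}){}_{\boldsymbol\lambda}a_{s,s+1}^*] = A_{rs}a_{s,s+1}^*$, hence $A_{rs}E^{(5)}$ after re-multiplying by $\Phi(b_s)$; and for $E^{(6)} = \kappa\cdot D a_{s,s+1}^*$, identity (\ref{eq:kdw}) of Lemma \ref{lemks} produces $A_{rs}\kappa\cdot D a_{s,s+1}^* + A_{rs}a_{s,s+1}^*\kappa\cdot{\boldsymbol\lambda}$, whose second summand exactly cancels the contribution from $\Phi(b_r)\cdot E^{(5)}$ computed above. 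Summing everything yields $[\rho(H_r){}_{\boldsymbol\lambda}\rho(E_s)] = A_{rs}\rho(E_s)$.

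The main obstacle is bookkeeping rather than any conceptually hard step: there are many nonzero pairings, each unfolding into nested sums over auxiliary indices, and one must verify case by case that the Kronecker deltas furnished by Lemma \ref{prelimlem} match the summation ranges defining $H^{(2)}, H^{(3)}$ and each $E^{(j)}$. The most delicate point is the cancellation between the $a_{s,s+1}^*\kappa\cdot{\boldsymbol\lambda}$ quasi-derivative term produced by (\ref{eq:kdw}) and the central bosonic contribution from $[\Phi(b_r){}_{\boldsymbol\lambda}\Phi(b_s)]$; this is the only place where the structure constants $\kappa_{\mathbf m,p}$ interact actively with the Cartan-matrix coefficient $A_{rs}$.
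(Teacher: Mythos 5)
Your architecture is the paper's own: split $\rho(H_r)$ and $\rho(E_s)$ into summands, distribute the bracket with the Leibniz rule \eqref{rule2}, evaluate the elementary contractions with Lemmas \ref{prelimlem} and \ref{lemks}, and cancel the $a^*_{s,s+1}\,\kappa\cdot{\boldsymbol\lambda}$ term coming from \eqref{eq:kdw} against $a^*_{s,s+1}[\Phi(b_r)_{\boldsymbol\lambda}\Phi(b_s)]=-A_{rs}\,a^*_{s,s+1}\,\kappa\cdot{\boldsymbol\lambda}$. That last cancellation is identified correctly and is indeed the delicate point. However, two of your concrete bookkeeping claims are false, and with them the sum does not close. (i) It is not true that only $H^{(1)}=2a_{r,r+1}a^*_{r,r+1}$ meets $E^{(1)}=a_{s,s+1}(a^*_{s,s+1})^2$: for $r=s+1$ the term $-a_{r-1,r}a^*_{r-1,r}=-a_{s,s+1}a^*_{s,s+1}$ inside $H^{(2)}$ contracts with $E^{(1)}$ and yields $-E^{(1)}$, and for $r=s-1$ the term $-a_{r+1,r+2}a^*_{r+1,r+2}$ inside $H^{(3)}$ does the same; these off-diagonal contributions are exactly what promote your claimed coefficient $2\delta_{rs}$ to the required $A_{rs}$, and they appear explicitly as the leading terms of the paper's $s=r\mp 1$ computations. (ii) Dually, in the diagonal case $r=s$ the coefficient $2$ on $E^{(4)}=\sum_{j\ge s+2}\left(a_{sj}a^*_{sj}-a_{s+1,j}a^*_{s+1,j}\right)a^*_{s,s+1}$ is produced entirely by $H^{(1)}$ contracting the trailing factor $a^*_{s,s+1}$ (compare \eqref{eq:HE1}), while Lemma \ref{prelimlem}(b) forces $[H^{(3)}{}_{\boldsymbol\lambda}E^{(4)}]=0$ there; so assigning $E^{(4)}$ to the pairing $[(H^{(2)}+H^{(3)}){}_{\boldsymbol\lambda}\,\cdot\,]$ leaves it with coefficient $0$. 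The upshot is that the contributions to a given $E^{(j)}$ are scattered across $H^{(1)},H^{(2)},H^{(3)}$ in a way that depends on $r-s$, which is why the paper organizes the verification by the cases $|r-s|>1$, $r=s$, $s=r\pm1$ rather than summand by summand.

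Separately, your argument never touches the affine node. The relation is needed for all $0\le r,s\le n$, but $\rho(E_0)=-a_{1,n+1}$ is not of the six-summand shape and $\rho(H_0)=-\sum_{r=1}^n\rho(H_r)$; the case $s=0$ requires a short direct computation (giving $(\delta_{r,1}+\delta_{r,n})a_{1,n+1}=A_{r0}\rho(E_0)$ for $1\le r\le n$), and $r=0$ then follows from $\sum_{r=0}^{n}A_{rs}=0$. All of this is repairable without new ideas, but as written the proof does not go through.
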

\begin{proof}   
First assume $r,s \neq 0$.  If $|r-s|>1$,  observe that the indices of $a_{ij}$ and $a_{ij}^*$  that appear in
$\rho(H_r)(\mathbf z)$ and
$\rho(E_s)(\mathbf w)$ are disjoint and thus by Lemma \ref{prelimlem} (a) contribute nothing to the
$\boldsymbol \lambda$ bracket $[\rho(H_r)(\mathbf w)_{\boldsymbol \lambda} \rho(E_s)(\mathbf w)]$ (or equivalently to the commutator
$[\rho(H_r)(\mathbf z),\rho(E_s)(\mathbf w)]$). The remaining terms coming from the
$b_j$ have trivial commutator and thus $[\rho(H_r)(\mathbf w)_{\boldsymbol \lambda}
\rho(E_s)(\mathbf w)]=0$.

Now assume $r=s$ (with $r,s \neq 0$).  In this case
$\rho(E_r)(\mathbf w)$ is equal to 
\begin{align*}  &\quad  a_{r,r+1}a^*_{r,r+1}a^*_{r,r+1} +\sum_{j=r+2}^{n+1}
      \left(a_{rj}a^*_{rj}
        -a_{r+1,j}a^*_{r+1,j}\right)a^*_{r,r+1} \\
    &\quad +\sum_{j=1}^{r-1}
      a_{jr}a^*_{j,r+1}
      -\sum_{j=r+2}^{n+1} a_{r+1,j}a_{rj}^* +a_{r,r+1}^* \Phi(b_r)+\kappa\cdot Da_{r,r+1}^*,
\end{align*}
and $\rho(H_r)(\mathbf w)$ expands to
\begin{align*}      &2a_{r,r+1}a^*_{r,r+1} +\sum_{i=1}^{r-1}\left(
        a_{i,r+1}a^*_{i,r+1}-a_{ir}a^*_{ir}\right)
             +\sum_{j=r+2}^{n+1}\left(a_{rj}a_{rj}^*
       - a_{r+1,j}a_{r+1,j}^*\right)+\Phi(b_r),
\end{align*}
(where we have suppressed the variable $ \mathbf w  $).
Now 
\begin{align}\label{eq:HE1}
2\big[a_{r,r+1}&a^*_{r,r+1}{_{\boldsymbol \lambda}}
\rho(E_r)\big]=
  2a_{r,r+1}a^*_{r,r+1}a^*_{r,r+1}+2\sum_{j=r+2}^{n+1}
      \left(a_{r,j}a^*_{r,j}
        -a_{r+1,j}a^*_{r+1,j}\right)a^*_{r,r+1} \\
    &\hskip 100pt+2a_{r,r+1}^* \Phi(b_r)+2\kappa\cdot Da_{r,r+1}^*+2a_{r,r+1}^*\sum_{l=0}^N\kappa_l\lambda_l  .
\notag
\end{align}

The second summation in $\rho(H_r)( \mathbf w  )$ contributes
\begin{align}\label{eq:HE2}
\sum_{i=1}^{r-1}\Big[\left(
        a_{i,r+1}a^*_{i,r+1}- a_{ir}a^*_{ir}\right)
        {_{\boldsymbol \lambda}} \rho(E_r) \Big]
       &=\sum_{i=1}^{r-1}\Big[\left(
        a_{i,r+1}a^*_{i,r+1}- a_{ir}a^*_{ir}\right)_{{\boldsymbol \lambda}}
      a_{ir} a^*_{i,r+1} \Big]\\
       &=2\sum_{i=1}^{r-1}a_{ir} a^*_{i,r+1}.
\notag
\end{align}
Now in the third summation in $\rho(H_r)(\mathbf w)$ the index $j$ is greater than or equal to $r+2$ and so commutes with all but the second and fourth terms of $\rho(E_r) (\mathbf w)$ above, thus
\begin{align}  \label{eq:HE3}
\sum_{j=r+2}^{n+1} &\big[\left(a_{rj}a^*_{rj}
       -a_{r+1,j} a^*_{r+1,j}\right){_{\boldsymbol \lambda}}\rho(E_r)]  \\ \notag
       & =\sum_{j=r+2}^{n+1}\Big(
         \big[a_{rj}a^*_{rj}{_{\boldsymbol \lambda}}
       a_{rj}a^*_{rj}\big]  +\big[a_{r+1,j}a^*_{r+1,j}{_{\boldsymbol \lambda}}
    a_{r+1,j}a^*_{r+1,j}\big]\Big)  \\ \notag
      &\quad -\sum_{i=r+2}^{n+1}\sum_{j=r+2}^{n+1}\big[\left(a_{ri}a^*_{ri}
       -a_{r+1,i} a^*_{r+1,i}\right){_{\boldsymbol \lambda}}a_{r+1,j}a^*_{rj}] \\ \notag
       &=-2\sum_{j=r+2}^{n+1}a_{r+1,j}a^*_{rj}  . \notag
\end{align}
by \lemref{prelimlem} (b) and (g).
The last term in $\rho(H_r)(\mathbf w)$ contributes
\begin{align}\label{eq:HE4}
\big[\Phi(b_r){_{\boldsymbol \lambda}} \rho(E_r)\big]&=\big[\Phi(b_r){_{\boldsymbol \lambda}}a^*_{r,r+1}\Phi(b_{r})]
      =-2a^*_{r,r+1}\sum_{p=0}^N\kappa_p\lambda_p.
\end{align}
The previous four calculations, \ref{eq:HE1}, \ref{eq:HE2}, \ref{eq:HE3} and \ref{eq:HE4}, sum up to  give us the desired  result
$$[\rho(H_r)_{\boldsymbol \lambda} \rho(E_r)]=2\rho(E_r) .$$

Now suppose $s=r+1$ so that 
$\rho(E_{r+1})(\mathbf w  )$ is equal to 
\begin{align*}    &\quad a_{r+1,r+2}(a^*_{r+1,r+2})^2  +\sum_{j=r+3}^{n+1}
      \left(a_{r+1,j}a^*_{r+1,j}
        -a_{r+2,j}a^*_{r+2,j}\right)a^*_{r+1,r+2} \\
    &\quad
    +\sum_{i=1}^{r}
     a_{i,r+1} a^*_{i,r+2}
      -\sum_{j=r+3}^{n+1}a_{r+2,j}a^*_{r+1,j} +a_{r+1,r+2}^* \Phi(b_{r+1})+\kappa\cdot Da_{r+1,r+2}^* . 
\end{align*}
Then the first summand in $\rho(H_r)(\mathbf w)$ contributes
$$
2\big[a_{r,r+1}a^*_{r,r+1}{_{\boldsymbol \lambda}} \rho (E_{r+1})\big]=-
2a_{r,r+1}a^*_{r,r+2}.
$$
The second summation in $H_r(\mathbf w)$ contributes
\begin{align*}\sum_{i=1}^{r-1}\Big[\left(
       a_{i,r+1} a^*_{i,r+1}- a_{ir}a^*_{ir}\right)
        {_{\boldsymbol \lambda}} \rho (E_{r+1})\Big]&=\sum_{i=1}^{r-1}\Big[\left(
        a_{i,r+1}a^*_{i,r+1}- a_{ir}a^*_{ir}\right)_{\boldsymbol \lambda}      a_{i,r+1} a^*_{i,r+2} \Big]\\
       &=-\sum_{i=1}^{r-1}
      a_{i,r+1} a^*_{i,r+2}  .
\end{align*}
The third summand contributes by \lemref{prelimlem}
\begin{align*}\sum_{j=r+2}^{n+1}\big[&\left(a_{rj}a^*_{rj}
       - a_{r+1,j}a^*_{r+1,j},\right)_{\boldsymbol \lambda} \rho (E_{r+1})]  \\
    &=- a_{r+1,r+2}a^*_{r+1,r+2}a^*_{r+1,r+2}\\
    &\quad -\sum_{j=r+3}^{n+1}
      \left(a_{r+1,j}a^*_{r+1,j}
        -a_{r+2,j}a^*_{r+2,j}\right)a^*_{r+1,r+2} \\
    &\quad -a_{r,r+1}a^*_{r,r+2}
    -\sum_{j=r+3}^{n+1}a_{r+2,j} a^*_{r+1,j}\\
    &\quad -a^*_{r+1,r+2}\Phi(b_{r+1}) -\kappa\cdot Da_{r+1,r+2}^*-a_{r+1,r+2}^*\sum_{l=0}^N\kappa_l\lambda_i.
\end{align*}
The last summand in $\rho(H_r)(\mathbf w)$ has ${\boldsymbol \lambda}$-bracket with 
$\rho (E_{r+1})(\mathbf w)$ equal to
\begin{align*}\Big[\Phi(b_r)_{\boldsymbol \lambda}\rho (E_{r+1})\Big] 
     &= a^*_{r+1,r+2}
     [\rho(b_r)_{\boldsymbol \lambda}\rho(b_{r+1})]=a^*_{r+1,r+2}\sum_{p=0}^N\kappa_p\lambda_p.
\end{align*}
Adding the previous four equations up we get $[\rho(H_r)_{\boldsymbol \lambda}\rho
(E_{r+1})] =-\rho (E_{r+1})  $.

The final nontrivial case to consider is when $s=r-1$ (and $rs\neq 0$)  so that 
$\rho(E_{r-1})(\mathbf w)$ is equal to 
\begin{align*}&\quad a_{r-1,r}(a^*_{r-1,r})^2  +\sum_{j=r+1}^{n+1}
      \left(a_{r-1,j}a^*_{r-1,j}
        -a_{rj}a^*_{rj}\right) a^*_{r-1,r}\\
    &\quad
    +\sum_{j=1}^{r-2}
      a_{j,r-1}a^*_{jr}
      -\sum_{j=r+1}^{n+1}a_{r,j}a^*_{r-1,j} +a^*_{r-1,r}\Phi(b_{r-1})
      +\kappa\cdot Da^*_{r-1,r}.  
\end{align*}
Then 
$$
2\big[a_{r,r+1}a^*_{r,r+1}{_{\boldsymbol \lambda}} \rho(E_{r-1})\big]=-2
a^*_{r-1,r+1}a_{r,r+1} .
$$
The second summation in $\rho(H_r)(\mathbf w)$ contributes by \lemref{prelimlem}
\begin{align*}\sum_{i=1}^{r-1}\Big[&\left(
        a_{i,r+1}a^*_{i,r+1}- a_{ir}a^*_{ir}\right)_{{\boldsymbol \lambda}}
        \rho (E_{r-1})\Big]\\
    &=-a_{r-1,r}a^*_{r-1,r}a^*_{r-1,r} 
 -\sum_{j=r+1}^{n+1}
      \left(a_{r-1,j}a^*_{r-1,j}
        -a_{rj}a^*_{rj}\right)a^*_{r-1,r} \\
    &\quad    -\sum_{j=1}^{r-2}
      a_{j,r-1}a^*_{jr}
    +a_{r,r+1}a^*_{r-1,r+1}\\
   &\quad-a^*_{r-1,r}\Phi(b_{r-1})
    -\kappa\cdot Da^*_{r-1,r}-\sum_{p=0}^N\kappa_p\lambda_p.
\end{align*}
The third summand contributes
$$
\sum_{j=r+2}^{n+1}\big[\left(a_{rj}a^*_{rj}
       -a_{r+1,j} a^*_{r+1,j}\right){_{\boldsymbol \lambda}} \rho (E_{r-1}) ]
       =\sum_{j=r+2}^{n+1}a_{r,j}a^*_{r-1,j}.
$$
The last summation in $\rho(H_r)(\mathbf w)$ has $\boldsymbol \lambda$ bracket with
$\rho (E_{r-1})(\mathbf w)$ that reduces to
\begin{align*}\Big[\Phi( b_r)_{\boldsymbol \lambda} \rho (E_{r-1}) \Big] 
     =-\sum_{p=0}^N\kappa_p\lambda_p
\end{align*}
Summing the previous four equations gives
$[\rho(H_r)(\mathbf w),\rho(E_{r-1})(\mathbf w)] =-\rho(E_{r-1})(\mathbf w)$.

We now consider the case of $s=0$ and $r\neq 0$:  Then
$\rho(E_{s})(\mathbf w)=\rho(E_{0})(\mathbf w)=-a_{1,n+1}$ and hence 
$$
2\big[a_{r,r+1}a^*_{r,r+1}{_{\boldsymbol \lambda}} \rho(E_{0})\big]=0.
$$
The second summation in $\rho(H_r)(\mathbf w)$ contributes  
\begin{align*}
-\sum_{i=1}^{r-1}\Big[&\left(
        a_{i,r+1}a^*_{i,r+1}- a_{ir}a^*_{ir}\right)_{{\boldsymbol \lambda}}
        a_{1,n+1}\Big]=\delta_{r,n}a_{1,n+1}.
\end{align*}
The third summand contributes
$$
-\sum_{j=r+2}^{n+1}\big[\left(a_{rj}a^*_{rj}
       -a_{r+1,j} a^*_{r+1,j}\right){_{\boldsymbol \lambda}} a_{1,n+1} ]
       =\delta_{r,1}a_{1,n+1}.
$$
The last summation in $\rho(H_r)(\mathbf z)$ has commutator with
$\rho (E_{0})(\mathbf w)$ equal to $0$, and hence does not contribute to the $\boldsymbol \lambda$-bracket. Summing the previous three equations we get
$[\rho(H_r)(\mathbf w)_{\boldsymbol \lambda}\rho(E_{0})(\mathbf w)] =-\rho(E_{0})(\mathbf w)$.

If $r=0$, then since $\rho(H_0)=-\sum_{r=1}^n\rho(H_r)$, we get
\begin{align*}
[\rho(H_0)(\mathbf w)_{\boldsymbol \lambda}\rho(E_s)(\mathbf w)]&=-\sum_{r=1}^n[\rho(H_r)(\mathbf w)_{\boldsymbol \lambda}\rho(E_s)(\mathbf w)]  \\
&=-\sum_{r=1}^nA_{r,s}\rho(E_s)(\mathbf w)=A_{0,s}\rho(E_s)(\mathbf w)\\
\end{align*}
which holds for any $s$.  This completes the proof of the Lemma.
\end{proof}
Since our expression for $F_0(\mathbf w)$ is quite different from that of $F_i (\mathbf w)$ if $i \neq 0$ we shall prove that case separately. First we consider the case 
\begin{lem}[T2]  For $r,s \neq 0$,  
$$
[\rho(H_r)(\mathbf w)_{\boldsymbol \lambda} \rho(F_s)(\mathbf w)]=-A_{rs}\rho(F_s)(\mathbf w)
$$
\end{lem}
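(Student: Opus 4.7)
The plan is to follow exactly the template just used for $[\rho(H_r){}_{\boldsymbol\lambda}\rho(E_s)]=A_{rs}\rho(E_s)$; the computation here is substantially lighter because for $s\neq 0$,
\[
\rho(F_s)(\mathbf w)=a_{s,s+1}(\mathbf w)-\sum_{j=1}^{s-1}a_{j,s+1}(\mathbf w)a_{js}^{*}(\mathbf w)
\]
has no $(a^{*})^{2}$ factor, no $\Phi(b_s)$ summand and no $\kappa\cdot D$ piece.  I would split on $|r-s|$ (and on the sign of $r-s$), expand $\rho(H_r)(\mathbf w)$ into its four summands $2a_{r,r+1}a_{r,r+1}^{*}$, $\sum_{i=1}^{r-1}(a_{i,r+1}a_{i,r+1}^{*}-a_{ir}a_{ir}^{*})$, $\sum_{j=r+2}^{n+1}(a_{rj}a_{rj}^{*}-a_{r+1,j}a_{r+1,j}^{*})$, and $\Phi(b_r)$, and bracket each one against the two summands of $\rho(F_s)$ using Leibniz together with $[a_{ij}{}_{\boldsymbol\lambda}a_{kl}^{*}]=\delta_{ik}\delta_{jl}$ from \lemref{prelimlem}(a).

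When $r\ge s+2$ the index pairs in $\rho(H_r)$ are literally disjoint from those in $\rho(F_s)$ (the former live in coordinates $\ge r$, the latter in coordinates $\le s+1\le r-1$), so the bracket vanishes, matching $A_{rs}=0$.  When $s\ge r+2$, common indices $(j,s),(j,s+1)$ with $j\in\{r,r+1\}$ do appear between Term~3 and the middle summand of $\rho(F_s)$, but for each such $j$ the two contractions---one in which $a_{j,s}^{*}$ is hit by $a_{j,s}$ and one in which $a_{j,s+1}$ is hit by $a_{j,s+1}^{*}$---enter with opposite signs and cancel pairwise.  When $r=s$, Term~1 contracts once with $a_{r,r+1}$ to give $-2a_{r,r+1}$, and Term~2 double-contracts with each $-a_{j,r+1}a_{jr}^{*}$ (once through $a_{i,r+1}^{*}$ on $a_{j,r+1}$, once through $a_{ir}$ on $a_{jr}^{*}$) to yield, after the overall signs, $+2\sum_{j<r}a_{j,r+1}a_{jr}^{*}$; the other two summands contribute nothing, and the total is $-2\rho(F_r)=-A_{rr}\rho(F_r)$.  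For $r=s+1$ only Term~2 contributes and reassembles $\rho(F_{r-1})=-A_{r,r-1}\rho(F_{r-1})$.  For $r=s-1$ Terms~1, 2 and~3 all contribute: Term~1 yields $-2a_{r,r+2}a_{r,r+1}^{*}$, Term~2 yields $-\sum_{j=1}^{r-1}a_{j,r+2}a_{j,r+1}^{*}$, and Term~3 yields $a_{r+1,r+2}+a_{r,r+2}a_{r,r+1}^{*}$, combining to exactly $a_{r+1,r+2}-\sum_{j=1}^{r}a_{j,r+2}a_{j,r+1}^{*}=\rho(F_{r+1})=-A_{r,r+1}\rho(F_{r+1})$.

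There is no conceptual obstacle beyond careful bookkeeping.  The subtleties to watch for are (i) that in the diagonal case the two single contractions inside $[(a_{i,r+1}a_{i,r+1}^{*}-a_{ir}a_{ir}^{*}){}_{\boldsymbol\lambda}a_{j,r+1}a_{jr}^{*}]$ combine with matching sign (after the $-$ in $-a_{ir}a_{ir}^{*}$ flips one of them) to produce $-2\delta_{ij}a_{j,r+1}a_{jr}^{*}$; (ii) that in the $r=s-1$ case the combined coefficient $-2+1=-1$ from Terms~1 and~3 on the cross term $a_{r,r+2}a_{r,r+1}^{*}$ exactly supplies the missing $j=r$ summand needed to extend $\sum_{j=1}^{r-1}$ from Term~2 up to $\sum_{j=1}^{r}$ in $\rho(F_{r+1})$; and (iii) that the $s\ge r+2$ subcase requires the pairwise-cancellation observation rather than literal index-disjointness, in contrast to the one-line ``indices are disjoint'' phrasing used in the preceding $E_s$ lemma.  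Because $\rho(F_s)$ for $s\neq 0$ has no Heisenberg or $\kappa\cdot D$ summand, \lemref{lemks} and the Heisenberg bracket $[\Phi(b_r){}_{\boldsymbol\lambda}\Phi(b_s)]=-A_{rs}\kappa\cdot{\boldsymbol\lambda}$ play no role, and no $\boldsymbol\lambda$-dependent central term appears on the right-hand side.
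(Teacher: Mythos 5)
Your proposal is correct and follows essentially the same route as the paper's proof: both expand $\rho(H_r)$ into its summands, contract each against the two pieces of $\rho(F_s)$ using \lemref{prelimlem}(a) and equation \eqref{c1} of \lemref{lemks}, and reduce to the cases $r=s$, $r=s\pm1$ with everything else vanishing (by disjointness when $r\ge s+2$ and by the pairwise cancellation you describe when $s\ge r+2$, which is exactly the content of the paper's \eqref{kspart2}). The only difference is cosmetic bookkeeping: the paper first absorbs $2a_{r,r+1}a_{r,r+1}^*$ into the sums $\sum_{i=1}^{r}a_{i,r+1}a_{i,r+1}^*$ and $\sum_{j=r+1}^{n+1}a_{rj}a_{rj}^*$ so that the bracket with $a_{s,s+1}$ yields $-A_{rs}a_{s,s+1}$ in a single line, whereas you keep the original four-term grouping; your case-by-case totals agree with the paper's.
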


\begin{proof}  
We assume $s,  r \neq 0 $ in       
\begin{align*}
\left[{H_{r} }_{\boldsymbol \lambda}  F_{s} \right]  = \left[\sum_{i=1}^{r }  \right.  &  a_{i,r+1}a_{i,r+1}^*-\sum_{i=1}^{r-1} a_{ir} a_{ir}^*   \\
           & \left. +\sum_{j=r+1}^{n+1} a_{rj}a_{rj}^*
       - \sum_{j=r+2}^{n+1}a_{r+1,j}a_{r+1,j}^* +\Phi(b_r)_{{\boldsymbol \lambda}} 
     a_{s,s+1}-\sum_{j=1}^{s-1}a_{j,s+1}a_{js}^*\right] 
\end{align*}   
        
(Omitting the multivariable $\mathbf w$ as before). Using  Lemma \ref{lemks} and the fact that $\Phi(b_r) $ commutes with the $a_{ij, \mathbf m} $ and $a_{ij, \mathbf m}^* $ gives
 
\begin{align}\label{ksc1}
 \left[\sum_{i=1}^{r }   a_{i,r+1}a_{i,r+1}^*\right. & \left. -\sum_{i=1}^{r-1} a_{ir} a_{ir}^*
       +\sum_{j=r+1}^{n+1} a_{rj}a_{rj}^*
       - \sum_{j=r+2}^{n+1}a_{r+1,j}a_{r+1,j}^* +\Phi(b_r)_{{\boldsymbol \lambda}}
       a_{s,s+1}\right] 
       \\
       =&-\delta_{r+1, s+1} a_{s, s+1}   + \delta_{r, s+1}  a_{s, s+1}  - \delta_{r,s}  a_{s, s+1}   + \delta_{r+1, s}  a_{s, s+1} \notag  \\
       =& -A_{rs} a_{s,s+1} \notag
\end{align}
 
For the remaining component we must show
\begin{align}
 \left[\right( \sum_{i=1}^{r }  a_{i,r+1}a_{i,r+1}^*-\sum_{i=1}^{r-1} a_{ir} a_{ir}^*\left) \right.
        & \left. +\right( \sum_{j=r+1}^{n+1} a_{rj}a_{rj}^*
       - \sum_{j=r+2}^{n+1}a_{r+1,j}{a_{r+1,j}^* }\left)_{{\boldsymbol \lambda}} -\sum_{j=1}^{s-1}a_{j,s+1}a_{js}^*\right]  \\
       & = A_{rs} \sum_{j=1}^{s-1}a_{j,s+1}a_{js}^*
\label{ksc2}\end{align}
First note that by 
Lemma \ref{prelimlem} 
\begin{equation}
\left[\sum_{i=1}^{r }  a_{i,r+1}a_{i,r+1}^*-\sum_{i=1}^{r-1} a_{ir} {a_{ir}^*}_{\boldsymbol \lambda} -\sum_{j=1}^{s-1}a_{j,s+1}a_{js}^*\right]
= 0
\label{t2cases}\end{equation}
unless $r=s$, $r+1=s$, or $r = s+1$.
 
Equation (\ref{c1}) of Lemma \ref{lemks} allows us to compute each case.
Suppose $r=s$,  then
$$
\left[\sum_{i=1}^{r }  a_{i,r+1}a_{i,r+1}^*-\sum_{i=1}^{r-1} a_{ir} {a_{ir}^*}_{\boldsymbol \lambda} -\sum_{j=1}^{s-1}a_{j,s+1}a_{js}^*\right]\\
=\sum_{j=1}^{s-1}a_{j,s+1}a_{js}^*   +\sum_{j=1}^{s-1}a_{j,s+1}a_{js}^* 
= 2 \sum_{j=1}^{s-1}a_{j,s+1}a_{js}^* .
\label{rs}$$
Suppose $r = s-1$, then  
$$\left[-\sum_{i=1}^{r-1} a_{ir} {a_{ir}^*}_{\boldsymbol \lambda} -\sum_{j=1}^{s-1}a_{j,s+1}a_{js}^*\right]=0$$
and
$$\left[\sum_{i=1}^{r }  a_{i,r+1} {a_{i,r+1}^*}_{\boldsymbol \lambda}  -\sum_{j=1}^{s-1}a_{j,s+1}a_{js}^*\right]=- \sum_{j=1}^{s-1} a_{j, s+1} a_{js}^* .
\label{rs+1}
$$
Similarly, if $r = s+1$ 
$$\left[\sum_{i=1}^{r }  a_{i,r+1} {a_{i,r+1}^*}_{\boldsymbol \lambda}  -\sum_{j=1}^{s-1}a_{j,s+1}a_{js}^*\right]=0
$$
and
$$
\left[-\sum_{i=1}^{r-1} a_{ir} {a_{ir}^*}_{\boldsymbol \lambda} -\sum_{j=1}^{s-1}a_{j,s+1}a_{js}^*\right]=- \sum_{j=1}^{s-1} a_{j, s+1} a_{js}^*  
\label{s+1r} 
$$
We have shown
\begin{equation}
 \left[\left( \sum_{i=1}^{r }  a_{i,r+1}a_{i,r+1}^*-\sum_{i=1}^{r-1} a_{ir} a_{ir}^*\right)_{{\boldsymbol \lambda}} -\sum_{j=1}^{s-1}a_{j,s+1}a_{js}^*\right] = A_{rs} \sum_{j=1}^{s-1}a_{j,s+1}a_{js}^*
\label{kspart1}\end{equation}

Applying  Lemma \ref{prelimlem} and Lemma \ref{lemks} and splitting into cases $r > s-1, r = s-1,$ and $ 1\leq r \leq s-1$  a straightforward computation shows the remaining component satisfies
\begin{equation}
 \left[ \sum_{i=r+1}^{n+1} a_{ri}a_{ri}^*
       - \sum_{i=r+2}^{n+1}a_{r+1,i} {a_{r+1,i}^*}_{\boldsymbol \lambda} -\sum_{j=1}^{s-1}a_{j,s+1}a_{js}^*\right] = 0
\label{kspart2}\end{equation}
 for all $r,s \neq 0$. Equations (\ref{kspart1}) and (\ref{kspart2}) give (\ref{ksc2}) and the desired result. 
\end{proof}

The case of $[H_0(\mathbf w)_{\boldsymbol \lambda} F_i(\mathbf w)]$ is similar to the above and is left to the reader. 
Next we  consider the case of $F_0 (\mathbf w)$. 

\begin{lem} [T2] For all $0\leq k \leq n$, $[\rho( H_k)(\mathbf w)_{\boldsymbol \lambda} \rho( F_0)  (\mathbf w)] = -A_{k0} F_0 (\mathbf w)$.
\end{lem}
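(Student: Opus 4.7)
We plan to treat the two cases $k \neq 0$ and $k = 0$ separately; the case $k = 0$ will follow at once from the definition $\rho(H_0)(\mathbf w) = -\sum_{r=1}^n \rho(H_r)(\mathbf w)$ combined with the identity $A_{00} = -\sum_{r=1}^n A_{r0}$ (which reads $2 = -(-1) - 0 - \cdots - 0 - (-1)$ for the affine Cartan matrix of $A_n^{(1)}$).

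For $k \ne 0$, the plan is to apply the derivation rule \eqref{rule2} term by term to the three kinds of monomial making up $\rho(F_0)(\mathbf w)$, and to organize the computation around two observations. First, Lemma \ref{prelimlem}(a) implies that for any $r < s$,
\[
[\rho(H_k)(\mathbf w)_{\boldsymbol \lambda} a_{rs}(\mathbf w)] = -c_k(r,s)\,a_{rs}(\mathbf w), \qquad [\rho(H_k)(\mathbf w)_{\boldsymbol \lambda} a_{rs}^*(\mathbf w)] = c_k(r,s)\,a_{rs}^*(\mathbf w),
\]
where $c_k(r,s) := \sum_{l=r}^{s-1} A_{kl}$ is the coefficient of $a_{rs}a_{rs}^*$ appearing in the expansion of $\rho(H_k)(\mathbf w)$. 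A short telescoping, using the partition constraints $q_1 = 1$, $q_{i+1} = n+1$, together with $q_i = j$ (for the first summand) or $q_i = r$ (for the third summand), shows that the sum of these $c_k$-charges over all the $a$ and $a^*$ factors in any single monomial of $\rho(F_0)(\mathbf w)$ equals $\theta(H_k) = -A_{k0}$, where $\theta = \alpha_1 + \cdots + \alpha_n$ denotes the highest root of the underlying finite type $A_n$. This produces the expected eigenvalue-style contribution $-A_{k0}\rho(F_0)(\mathbf w)$.

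Second, I would verify that the two cocycle contributions cancel. These are: (i) the piece $-A_{kr}\,\kappa\cdot\boldsymbol\lambda$ from $[\Phi(b_k)_{\boldsymbol\lambda}\Phi(b_r)]$ when $\rho(H_k)$ hits the factor $\Phi(b_r)$ in the second kind of summand of $\rho(F_0)$; and (ii) the anomaly $c_k(r,n+1)\,a_{r,n+1}^*\,\kappa\cdot\boldsymbol\lambda$ from Lemma \ref{lemks} equation \eqref{eq:kdw} when $\rho(H_k)$ hits the factor $\kappa\cdot D\,a_{r,n+1}^*$ in the third kind of summand. After reindexing the third sum by absorbing $a_{r,n+1}^* = a_{q_i,q_{i+1}}^*$ into the product $\prod_{l=1}^{i-1} a_{q_l,q_{l+1}}^*$, both cocycle expressions become sums over a common family of partitions $1 = q_1 < \cdots < q_i < q_{i+1} = n+1$, each multiplied by $\prod_{l=1}^i a_{q_l,q_{l+1}}^*\,\kappa\cdot\boldsymbol\lambda$. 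For a fixed such partition, the scalar coefficient from (i) is $\sum_{r=q_i}^n A_{kr}$ and from (ii) is $-c_k(q_i,n+1)$; these agree by the very definition $c_k(q_i,n+1) = \sum_{l=q_i}^n A_{kl}$, giving the cancellation.

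The main technical obstacle is the partition bookkeeping: the partition parameter $\mathbf q$ in the $\kappa\cdot D$ summand has one fewer factor than in the $\Phi(b_r)$ summand, and one must recognize how absorbing the standalone $a_{r,n+1}^*$ precisely aligns the two families. Once this identification is made, the cancellation follows immediately, yielding $[\rho(H_k)(\mathbf w)_{\boldsymbol \lambda}\rho(F_0)(\mathbf w)] = -A_{k0}\rho(F_0)(\mathbf w)$ for $k \ne 0$, and the $k = 0$ reduction completes the proof.
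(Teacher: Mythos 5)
Your proof is correct, and it takes a genuinely different route from the paper's. The paper splits $\rho(F_0)=A+B+C$, regroups the quadratic part of $\rho(H_k)$ into two telescoping difference pairs, and uses the identity (\ref{equ:ID}) to show by explicit term-by-term cancellation that each of $[\rho(H_k)_{\boldsymbol\lambda}A]$, $[\rho(H_k)_{\boldsymbol\lambda}B]$, $[\rho(H_k)_{\boldsymbol\lambda}C]$ vanishes for $1<k<n$, then runs $k=1$ and $k=n$ as separate cases in which the surviving terms reassemble into $\rho(F_0)$. You instead argue by charge conservation: since only diagonal quadratic terms $a_{uv}a_{uv}^*$ occur in $\rho(H_k)$, Lemma~\ref{prelimlem}(a) together with the Leibniz rule (\ref{rule2}) makes each factor $a_{rs}$, $a_{rs}^*$ an eigenvector of $\mathrm{ad}\,\rho(H_k)$ with eigenvalue $\mp c_k(r,s)$, where $c_k(r,s)=\sum_{l=r}^{s-1}A_{kl}$ is indeed the coefficient of $a_{rs}a_{rs}^*$ in $\rho(H_k)$, and the additivity $c_k(a,b)+c_k(b,c)=c_k(a,c)$ forces every monomial of $\rho(F_0)$ to have total charge $c_k(1,n+1)=\sum_{l=1}^{n}A_{kl}=-A_{k0}$, uniformly in $k$. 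I verified the two anomaly contributions as well: for a fixed chain the $\Phi(b_r)$ piece yields the scalar $\sum_{r=q_i}^{n}A_{kr}$ and the $\kappa\cdot D$ piece yields $-c_k(q_i,n+1)=-\sum_{l=q_i}^{n}A_{kl}$ once $a_{q_i,n+1}^*$ is absorbed into the product, so they cancel exactly as you say. Your approach buys uniformity in $k$ (no case split at $k=1,n$) and makes the representation-theoretic content transparent ($\rho(F_0)$ is a weight vector of weight $-\alpha_0$, so its $H_k$-eigenvalue is $\theta(H_k)$); the paper's approach buys consistency, reusing the same mechanical engine (\ref{equ:ID}) that drives the (T3) and (T4) computations. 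The one thing you should do to turn your plan into a finished proof is to isolate and prove the displayed eigenvector identities as a standalone lemma, since they are used for every factor of every monomial and are not literally stated in the paper.
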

 
\begin{proof}To simplify the computation for $\rho(F_0) (\mathbf w)$ one should note that
for all positive integers $s, t$, $i$, and fixed $i+1$-tuple $\mathbf q= (q_1,q_2,q_3, \dots q_{i+1}) \in \mathbf Z^{i+1}$ with $1=q_1<q_2 < \cdots <q_i$ it follows immediately from Lemma \ref{prelimlem} that
\begin{equation}[{a_{st}a_{st}^*}_{\boldsymbol \lambda} 
 \prod_{l=1}^{i } a_{q_l q_{l+1}}^*] =   \prod_{l=1}^{i } \delta_{s q_l} \delta_{t q_{l+1} }a_{q_l q_{l+1}}^* 
\label{equ:ID} \end{equation} 
In other  words, the expression $[{a_{st}a_{st}^*}_\lambda  
 \prod_{l=1}^{i } a_{q_l q_{l+1}}^*]  $ is zero unless $s,t$ appear as consecutive integers in the increasing sequence $q_1<q_2 < \cdots <q_i$ in which case $ [{a_{st}a_{st}^*}_\lambda \cdot ]$ acts as an identity operator.   From this observation and noting that the expression $\rho (F_0)(\mathbf w)$ contains sums of strings of such products, one is motivated to arrange terms of $\rho(H_k) (\mathbf w)$ to promote cancellation, writing (where we suppress the multi-variable):
\begin{equation*}
\rho(H_k )=  \sum_{i=1}^{k }  a_{i,k+1}a_{i,k+1}^*
  - \sum_{j=k+2}^{n+1}a_{k+1,j}a_{k+1,j}^* 
  -\sum_{i=1}^{k-1} a_{ik} a_{ik}^*
       +\sum_{j=k+1}^{n+1} a_{kj}a_{kj}^*
     +\Phi(b_k).
\end{equation*}
We now consider the $\boldsymbol \lambda$ (or equivalently the commutators) of components of $\rho(H_k )(\mathbf w) $ and $\rho(F_0)(\mathbf w)$, and will show that  $[\rho(H_k)(\mathbf w)_{\boldsymbol \lambda}\rho(F_0)(\mathbf w)]$ is zero except in cases of $k = 0, 1, n$. 
Let
$$
A:= -\sum_{1\leq r < n+1} \sum_{m= r+1}^{n+1} a_{rm} \sum_{\stack{\mathbf q }{m= q_i >q_{i-1} \dots >q_1 =1 }} 
 \prod_{l=1}^{i -1} a_{q_l q_{l+1}}^*  a_{r, n+1}^* 
$$
$$
B:=- \sum_{1\leq r < n+1} \Phi(b_r) \sum_{\stack{\mathbf q}{ r\geq q_i >q_{i-1} \cdots >q_1 =1} } 
 \prod_{l=1}^{i } a_{q_l q_{l+1}}^*
$$
$$
C:= -\sum_{\stack{\mathbf q }{ r= q_i >q_{i-1} \cdots >q_1 =1} } 
 \prod_{l=1}^{i -1} a_{q_l q_{l+1}}^*  \kappa \cdot D a_{r, n+1}^*,
$$
so $\rho(F_0) (\mathbf w) = A + B + C$. 
Because it is simpler, we first consider the second component $B$.
Fix $r$ with $1\leq r < n+1$, recall $q_{i+1} = n+1$, and fix $k$ with $1\leq k  < n $. In a fixed $\mathbf q \in \mathbf Z^{i+1}$ as above, if none of the $q_j = k +1$ for $1\leq j \leq i$ then 
by (\ref{equ:ID})
\begin{equation*}
\left[\left( \sum_{i=1}^{k }
        a_{i,k+1}a_{i,k+1}^*-\sum_{j= k+2}^{n+1} a_{k+1,j}a_{k+1,j}^* \right)_{\boldsymbol \lambda}
 \prod_{l=1}^{i } a_{q_l q_{l+1}}^*
   \right] 
 =0
\end{equation*}
On the other hand, 
if $q_t = k+1$ for some (unique because of the conditions on $\mathbf q$) $ 1<t< i+1$ then (\ref{equ:ID}) shows
\begin{equation*}
\left[\left( \sum_{i=1}^{k }
        a_{i,k+1}a_{i,k+1}^*-\sum_{j= k+2}^{n+1} a_{k+1,j}a_{k+1,j}^* \right)_{\lambda}
 \prod_{l=1}^{i } a_{q_l q_{l+1}}^*
   \right] \\
   = 
 \prod_{l=1}^{i } a_{q_l q_{l+1}}^*  -    \prod_{l=1}^{i } a_{q_l q_{l+1}}^*  
  =0
\label{eq:template} \end{equation*}
Since $\Phi(b_r)(\mathbf z)$ commutes with the operators $a_{ij}(\mathbf w)$ and $a_{ij}^*(\mathbf w)$, we have shown, summing over $r, \mathbf q$ with $r\geq q_i $:
For $1\leq k< n$ 
\begin{equation} \left[\sum_{i=1}^{k }
        a_{i,k+1}a_{i,k+1}^*-\sum_{j= k+2}^{n+1} a_{k+1,j} {a_{k+1,j}^* }_{\boldsymbol \lambda} B
   \right] =0 .
   \label{eq:F0bk1}
\end{equation}
A similar argument shows that for $1< k \leq n$ 
\begin{equation}\left[ \sum_{j=k+1}^{n+1}a_{kj}a_{kj}^*
       - \sum_{i=1}^{k-1} a_{ik} {a_{ik}^*}_{\lambda}  B
   \right] =0 .
   \label{eq:F0bk2}
\end{equation}
Adding equations (\ref{eq:F0bk1}) and (\ref{eq:F0bk2}) shows, for $1<  k <  n$
\begin{equation} 
\left[\rho(H_k) (\mathbf w)_{\lambda} 
   B  \right] = 0 
\end{equation}
Now consider the first component $A$ of our realization of $F_0 (\mathbf w)$.
Note that the last term, $a_{r, n+1}^* $ may also appear in the product $ \prod_{l=1}^{i -1} a_{q_l q_{l+1}}^* $. 
Assume $k \neq n$,  apply (\ref{equ:ID})  and (\ref{rule2}) for manipulating the brackets:
\begin{align}
 \sum_{i,j} &  \left[ a_{i,k+1}a_{i,k+1}^*- a_{k+1, j} {a_{k+1, j}^*}_{\boldsymbol{ \lambda}} A \right]   \label{eq1}  \\
   &=   \sum_{i,j}  \sum_{r<m} \left[ a_{i,k+1}a_{i,k+1}^*- a_{k+1, j} {a_{k+1, j}^* }_{\boldsymbol{ \lambda}}  -a_{rm}\right] 
  \sum_{\stack{\mathbf q }{ m= q_i > \cdots >q_1 =1 }}  \prod_{l=1}^{i -1} a_{q_l q_{l+1}}^*  a_{r, n+1}^*  \notag \\ 
&\  \  -a_{rm}   \left(   \left[  a_{i,k+1}a_{i,k+1}^*- a_{k+1, j} {a_{k+1, j}^*}_{\boldsymbol{ \lambda}}  \sum_{\stack{\mathbf q }{ m= q_i >q_{i-1} \cdots >q_1 =1} }  \prod_{l=1}^{i -1} a_{q_l q_{l+1}}^*\right]  a_{r, n+1}^*   \right.  \notag \\
 &\left. \ \ + \sum_{\stack{\mathbf q }{ m= q_i >q_{i-1} \cdots >q_1 =1} }  \prod_{l=1}^{i -1} a_{q_l q_{l+1}}^*   \left[  a_{i,k+1}a_{i,k+1}^*- a_{k+1, j} {a_{k+1, j}^*}_{\boldsymbol{ \lambda}}  a_{r, n+1}^*
\right]   \right)    \notag \\
&= \sum_{i,j} \sum_{r<m}  (a_{i, k+1} \delta_{i,r} \delta_{k+1, m}  \sum_{\stack{\mathbf q}{ m= q_i r \geq q_{i-1} \cdots >q_1 =1 } }
 \prod_{l=1}^{i -1} a_{q_l q_{l+1}}^*  a_{r, n+1}^*  \notag \\  
 &\  \  -  a_{k+1, j} \delta_{k+1, r} \delta_{j,m} \sum_{\stack{\mathbf q }{m= q_i r \geq q_{i-1} \cdots >q_1 =1 } }
 \prod_{l=1}^{i -1} a_{q_l q_{l+1}}^*  a_{r, n+1}^*  ) \notag \\
&  \ \ +\sum_{r<m} ( -a_{rm} \delta_{k+1,m}    \sum_{\stack {\mathbf q }{  m= q_i >q_{i-1} \cdots >q_1 =1} }  \prod_{l=1}^{i -1} a_{q_l q_{l+1}}^*  a_{r, n+1}^*  \notag \\
 &  \ \ + a_{r,m} \sum_{\stack{\mathbf q }{ m= q_i >q_{i-1} \cdots >q_1 =1} }  \prod_{l=1}^{i -1} a_{q_l q_{l+1}}^*   \left[  a_{k+1, j} {a_{k+1, j}^*} _{\boldsymbol{ \lambda}}   a_{r, n+1}^*
\right] )  \notag  \\
&= \sum_{r<m}
(a_{r, k+1 } \delta_{m k+1}  \sum_{\stack{\mathbf q,  m= q_i }{ r \geq q_{i-1} \cdots >q_1 =1 } }
 \prod_{l=1}^{i -1} a_{q_l q_{l+1}}^*  a_{r, n+1}^*   -  a_{k+1 , m } \delta_{k+1, r}   \sum_{\stack{\mathbf q,   m= q_i }{ r \geq q_{i-1} \cdots >q_1 =1 } }
 \prod_{l=1}^{i -1} a_{q_l q_{l+1}}^*  a_{r, n+1}^*  ) \notag \\
&+ \sum_{r<m} ( -a_{rm} \delta_{k+1,m}    \sum_{\stack{\mathbf q }{ m= q_i >q_{i-1} \cdots >q_1 =1 } } \prod_{l=1}^{i -1} a_{q_l q_{l+1}}^*  a_{r, n+1}^*   
  + \delta_{r, k+1} a_{rm}  \sum_{\stack{\mathbf q }{m= q_i >q_{i-1} \cdots >q_1 =1} }  \prod_{l=1}^{i -1} a_{q_l q_{l+1}}^*   a_{r, n+1}^* ) \notag \\
&= 0 \notag
\end{align}
Thus we have shown for $1\leq k<n$ 
\begin{equation} \label{HkF0part1}
\left[  \sum_{i=1}^k a_{i,k+1}a_{i,k+1}^* - \sum_{j= k+2}^{n+1} a_{k+1, j} {a_{k+1, j}^* }_{\boldsymbol{ \lambda}} 
A \right]  = 0
\end{equation}
Using a similar argument one can also show
for $1<k\leq n$ 
\begin{equation}
\left[\sum_{i= 1}^{k-1}  -a_{ik} a_{ik}^* + \sum_{j= k+1}^{n+1}  a_{kj}{a_{kj}^*}_{\boldsymbol{ \lambda}} A \right] =0
\label{HkF0part2}
\end{equation}
 So for $1<k<n$
 $$ [H_k(\mathbf w)_{\boldsymbol \lambda} A] = 0$$
Using Lemma \ref{lemks} and equation (\ref{equ:ID}) as above, one obtains for $1\leq k <n$
\begin{equation}
\sum_{r=1}^n \left[  \sum_{i=1}^k a_{i,k+1}a_{i,k+1}^* - \sum_{j= k+2}^{n+1} a_{k+1, j} {a_{k+1, j}^* }_{\boldsymbol{ \lambda}} 
 C \right]  = 
\sum_{\stack{\mathbf q }{ k+1= q_i >q_{i-1} \cdots >q_1 =1 } }
 \prod_{l=1}^{i -1} a_{q_l q_{l+1}}^*   a_{k+1, n+1}^* \kappa \cdot  {\boldsymbol{ \lambda}}
\label{tailend1} \end{equation}
and for $k$ with $1 < k \leq  n$:
\begin{equation}
\sum_{r=1}^n\left[\sum_{i= 1}^{k-1}  -a_{ik} a_{ik}^* + \sum_{j= k+1}^{n+1}  a_{kj} {a_{kj}^*}_{\boldsymbol{ \lambda}}  
C\right] =
- \sum_{\stack{\mathbf q }{ k= q_i >q_{i-1} \cdots >q_1 =1 } }
 \prod_{l=1}^{i -1} a_{q_l q_{l+1}}^* a_{k, n+1}^*  \kappa \cdot  {\boldsymbol{ \lambda}} ,
\label{tailend2} \end{equation}
The last term $\Phi(b_k)(\mathbf z)$ appearing in $H_k(\mathbf z)$ commutes with all of the operators $a_{ij}(\mathbf w), a_{ij}^* (\mathbf w)$, so all that remains is to compute for $1\leq k <n$
\begin{align}\label{tailend3} 
  [      &  \Phi    (b_k) (\mathbf w)_{\boldsymbol{ \lambda}} 
        -  \sum_{1\leq r < n+1}  \Phi(b_r) (\mathbf w) \sum_{\stack{\mathbf q }{  r\geq q_i >\cdots >q_1 =1 } }
 \prod_{l=1}^{i } a_{q_l q_{l+1}}^*
   ] \\
  & =  \sum_{1\leq r < n+1}  A_{kr} \sum_{\stack{\mathbf q}{ r\geq q_i >\cdots >q_1 =1 } }
 \prod_{l=1}^{i } a_{q_l q_{l+1}}^*\kappa\cdot  {\boldsymbol{ \lambda}} \notag
 \\
 &=   (-1)   \sum_{\stack{\mathbf q}{ k-1\geq q_i > \cdots >q_1 =1 } }
 \prod_{l=1}^{i } a_{q_l q_{l+1}}^*\kappa\cdot  {\boldsymbol{ \lambda}}
+ 2 \sum_{\stack{\mathbf q }{ k\geq q_i > \cdots >q_1 =1 } }
 \prod_{l=1}^{i } a_{q_l q_{l+1}}^*\kappa\cdot  {\boldsymbol{ \lambda}} 
+(-1) \sum_{\stack{\mathbf q}{ k+1 \geq q_i > \cdots >q_1 =1 }} 
 \prod_{l=1}^{i } a_{q_l q_{l+1}}^*\kappa\cdot  {\boldsymbol{ \lambda}} \notag \\
  & = 
 \sum_{\stack{\mathbf q}{ k = q_i >  \cdots >q_1 =1 } }
 \prod_{l=1}^{i } a_{q_l q_{l+1}}^*\kappa\cdot {\boldsymbol{ \lambda}}
+(-1) \sum_{\stack{\mathbf q}{k+1 = q_i >  \cdots >q_1 =1 } }
 \prod_{l=1}^{i } a_{q_l q_{l+1}}^*\kappa\cdot  {\boldsymbol{ \lambda}} . \notag
\end{align}

Where we collect partitions in the last equality.
 
Now for $k\neq 0, 1, n$ we have shown
\begin{equation}
[\rho( H_k) {(\mathbf w) }_{\boldsymbol{ \lambda}} \rho(F_0)(\mathbf w) ] =  \ref{HkF0part1} + \ref{HkF0part2} + \ref{eq:F0bk1} + \ref{eq:F0bk2}+ \ref{tailend1} + \ref{tailend2} + \ref{tailend3} = 0
\end{equation}

\end{proof}

Now we consider the case $k=1$ where

\begin{equation}
 H_1 =  a_{1,2} a_{1,2}^ *  - \sum_{j=3}^{n+1} a_{2,j}a_{2,j}^*
       +\sum_{j=2}^{n+1}  a_{1j}a_{1j}^* +\Phi(b_1)
\end{equation}

Equations (\ref{eq:F0bk1}) and (\ref{HkF0part1}) hold for $k=1$ as does equation (\ref{tailend1}), so 
\begin{align}\label{eq:H12}
 \left[ a_{1,2} a_{1,2}^ *  - \sum_{j=3}^{n+1} a_{2,j}{a_{2,j}^*}_{\boldsymbol \lambda}
       F_0 \right] &=  -\sum_{\stack{\mathbf q}{ 2= q_i >q_{i-1} \cdots >q_1 =1 } }
 \prod_{l=1}^{i -1} a_{q_l q_{l+1}}^*   a_{2, n+1}^* \kappa \cdot  {\boldsymbol \lambda})\\ 
& = a_{12}^*a_{2,n+1}^* \kappa \cdot {\boldsymbol \lambda} \notag
  \end{align}
Furthermore, since in all of our $\mathbf q$, $q_1 = 1$ equations (\ref{equ:ID}) and (\ref{eq:kdw}) give
\begin{align}\label{eq:H13}
\left[ \sum_{j=2}^{n+1}  a_{1j}{a_{1j}^*}_{\boldsymbol \lambda} F_0\right] = &A +B +
\left[ \sum_{j=2}^{n+1}  a_{1j}{a_{1j}^*}_{\boldsymbol \lambda}  -  \sum_r \sum_{\stack{\mathbf q }{ r = q_i >q_{i-1} \cdots >q_1 =1 } } \prod_{l=1}^{i -1}a_{q_{l}q_{l +1} }^* \kappa\cdot Da_{r ,n+1}^*  \right]  \\
=& F_0 -a_{1,n+1}^* \kappa \cdot {\boldsymbol \lambda} \notag
\end{align}
Finally,
\begin{align}\label{eq:H11}
\left[\Phi (b_1)_{\boldsymbol \lambda}
       F_0 \right] &= 2 a_{1,n+1}^*\kappa\cdot{\boldsymbol \lambda}- 1 (a_{12}^*a_{2,n+1}^* + a_{1,n+1}^*)\kappa\cdot{\boldsymbol \lambda} \\
      & = -a_{12}^*a_{2,n+1}^*\kappa\cdot {\boldsymbol \lambda} + a_{1,n+1}^*\kappa\cdot {\boldsymbol \lambda} \notag
 \end{align}

Summing equations \ref{eq:H11}, \ref{eq:H12} and \ref{eq:H13} yields $[\rho( H_1)(\mathbf w)_{\boldsymbol \lambda} \rho(F_0)(\mathbf w)] = \rho( F_0) (\mathbf w)$ as desired.

Now consider $H_n= 
       \sum_{i=1}^{n }  a_{i,n+1}a_{i,n+1}^*-\sum_{i=1}^{n-1} a_{in} a_{in}^*
       +  a_{n,n+1}a_{n, n+1}^*
        +\Phi(b_n), 
$
writing $F_0(\mathbf w) = A  + B + C$ as above, and recalling our assumption that in all the $i+1$-tuples $\mathbf q$ the term $\mathbf q_{i+1} = n+1$ a straightforward computation using (\ref{equ:ID}) shows
\begin{equation} \label{eqA}
[  \sum_{i=1}^{n }  a_{i,n+1}{a_{i,n+1}^*}_{\boldsymbol \lambda} A] = A
\end{equation}
\begin{equation} \label{eqB}
[  \sum_{i=1}^{n }  a_{i,n+1}{a_{i,n+1}^*}_{\boldsymbol \lambda} B] = B.
\end{equation}
Furthermore
\begin{align} \label{eqC}
[  \sum_{i=1}^{n }  a_{i,n+1}{a_{i,n+1}^*}_{\boldsymbol \lambda} C]  = & -\sum_{i=1}^{n }  \sum_{1\leq r < n+1}\  \sum_{\stack{\mathbf q }{ r = q_i >q_{i-1} \cdots >q_1 =1 }  }
\prod_{l=1}^{i -1}a_{q_{l}q_{l +1} }^* [  a_{i,n+1} {a_{i,n+1}^*}_{\boldsymbol \lambda} \kappa\cdot Da_{r ,n+1}^*] \\
&= -  \sum_{1\leq r < n+1}\  \sum_{\stack{\mathbf q}{  r = q_i >q_{i-1} \cdots >q_1 =1 }}  
\prod_{l=1}^{i -1}a_{q_{l}q_{l +1} }^* [  a_{r,n+1} {a_{r,n+1}^* }_{\boldsymbol \lambda} \kappa\cdot Da_{r ,n+1}^*] \notag \\
&= C - \sum_{1\leq r < n+1}\  \sum_{\stack{\mathbf q}{  r = q_i >q_{i-1} \cdots >q_1 =1 }  }
\prod_{l=1}^{i }a_{q_{l}q_{l +1} }^*  \kappa\cdot {\boldsymbol \lambda}
 \notag \\
&= C - \sum_{\stack{\mathbf q}{ n \geq q_i >q_{i-1} \cdots >q_1 =1 }  }
\prod_{l=1}^{i }a_{q_{l}q_{l +1} }^*  \kappa\cdot {\boldsymbol \lambda} \notag
\end{align}
Equations (\ref{eq:F0bk2}) (\ref{HkF0part2}) and (\ref{tailend2}) hold for $k=n$ and show 
\begin{equation}
[-\sum_{i=1}^{n-1} a_{in} a_{in}^*
       +  a_{n,n+1}{a_{n, n+1}^*}_{\boldsymbol \lambda} A] =  0\end{equation}
\begin{equation}
[-\sum_{i=1}^{n-1} a_{in} a_{in}^*
       +  a_{n,n+1}{a_{n, n+1}^*}_{\boldsymbol \lambda} B] =  0\end{equation}
     \begin{equation} \label{eqCC}
[-\sum_{i=1}^{n-1} a_{in} a_{in}^*
       +  a_{n,n+1}{a_{n, n+1}^*}_{\boldsymbol \lambda} C] = -\sum_{n=q_i> q_{i-1} > \cdots q_1}  \prod_{l=1}^{i  }a_{q_{l}q_{l +1} }^* \kappa\cdot {\boldsymbol \lambda}
       \end{equation}

\begin{equation} \label{eqPHI}
[\Phi(b_n)_{\boldsymbol \lambda} \rho(F_0)] = (-1 \sum_{\stack{\mathbf q}{ n-1 \geq q_i >q_{i-1} \cdots >q_1 =1 }}  \prod_{l=1}^{i }a_{q_{l}q_{l +1} }^*  \kappa\cdot {\boldsymbol \lambda} 
+ 2 \sum_{\stack{\mathbf q}{  n \geq q_i >q_{i-1} \cdots >q_1 =1 } } \prod_{l=1}^{i }a_{q_{l}q_{l +1} }^*  \kappa\cdot {\boldsymbol \lambda}
\end{equation}

Summing equations (\ref{eqA})-(\ref{eqPHI}), we have 
\begin{align*}[\rho(H_n) & (\mathbf w)_{\boldsymbol \lambda} \rho(F_0)(\mathbf w)] =  A +B +C  
  - \sum_{\stack{\mathbf q}{ n \geq q_i > \cdots >q_1 =1 }  }\prod_{l=1}^{i }a_{q_{l}q_{l +1} }^*  \kappa\cdot {\boldsymbol \lambda} + 2 \sum_{\stack{\mathbf q}{  n \geq q_i >  \cdots >q_1 =1 } } 
\prod_{l=1}^{i }a_{q_{l}q_{l +1} }^*  \kappa\cdot {\boldsymbol \lambda} \\
&- (  \sum_{\stack{\mathbf q}{n=q_i>  \cdots > q_1} } \prod_{l=1}^{i  }a_{q_{l}q_{l +1} }^* \kappa\cdot {\boldsymbol \lambda} + \sum_{\stack{\mathbf q}{ n-1 \geq q_i > \cdots >q_1 =1 }}  \prod_{l=1}^{i }a_{q_{l}q_{l +1} }^*  \kappa\cdot {\boldsymbol \lambda})  \\
&= F_0 (\mathbf w)  + \sum_{\stack{\mathbf q}{  n \geq q_i >  \cdots >q_1 =1 } } 
\prod_{l=1}^{i }a_{q_{l}q_{l +1} }^*  \kappa\cdot {\boldsymbol \lambda} -\sum_{\stack{\mathbf q}{  n \geq q_i >  \cdots >q_1 =1 } } 
\prod_{l=1}^{i }a_{q_{l}q_{l +1} }^*  \kappa\cdot {\boldsymbol \lambda}\\
&= F_0 (\mathbf w)
\end{align*}
The case of $k = 0$ follows from the above, and is left to the reader.

\begin{lem}[T3]
$\displaystyle{\left[ \rho(E_{s}) (\mathbf w){_{\boldsymbol \lambda}}\rho(F_{r}) (\mathbf w)\right]=-\delta_{r,s}\left(\rho(H_{r}) (\mathbf w)+\frac{2}{A_{rs}}\sum_{l=0}^N\rho(K_{l}) (\mathbf w)\lambda_l\right)}$.
\end{lem}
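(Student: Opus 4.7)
The plan is to verify the identity case by case on $(r,s)$. The principal computational tools are the Leibniz rule (\ref{rule2}) for the $\boldsymbol\lambda$-bracket, the basic contractions $[a_{ij}{}_{\boldsymbol\lambda} a^*_{kl}]=\delta_{i,k}\delta_{j,l}$ and $[a_{ij}{}_{\boldsymbol\lambda} \kappa\cdot D a^*_{kl}]=\delta_{i,k}\delta_{j,l}\kappa\cdot\boldsymbol\lambda$ from Lemma \ref{prelimlem}(a),(c), and the fact that every $\Phi(b_i)$ commutes with every $a_{jk}$ and $a^*_{jk}$, so that only the bosonic--bosonic brackets $[\Phi(b_r){}_{\boldsymbol\lambda}\Phi(b_s)] = -A_{rs}\kappa\cdot\boldsymbol\lambda$ survive among the $\Phi$-pieces.

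For $r\neq s$ with both nonzero, the claim is that the bracket vanishes. If $|r-s|>1$ the index pairs appearing on the $a_{ij},a^*_{ij}$ inside $\rho(E_s)$ and $\rho(F_r)$ are disjoint and $A_{rs}=0$ kills the $\Phi(b)$ cross-term, so Lemma \ref{prelimlem}(a) gives zero outright. If $|r-s|=1$ a handful of surviving cross contractions (for instance $a_{r,r+1}$ in $\rho(F_r)$ paired against the $a^*$-terms of $\rho(E_{r\pm 1})$) must cancel in pairs, by bookkeeping entirely analogous to the $|r-s|=1$ subcases already handled in the (T2) proof.

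The central case is $r=s\neq 0$. I would expand both $\rho(E_r)$ and $\rho(F_r)$ into their constituent summands and use the Leibniz rule to reduce $[\rho(E_r){}_{\boldsymbol\lambda}\rho(F_r)]$ to a list of elementary contractions, then reassemble. Expected contributions: $[a_{r,r+1}(a^*_{r,r+1})^2{}_{\boldsymbol\lambda} a_{r,r+1}] = -2a_{r,r+1}a^*_{r,r+1}$, matching the first summand of $-\rho(H_r)$; the brackets of $\sum_{j\geq r+2}(a_{rj}a^*_{rj}-a_{r+1,j}a^*_{r+1,j})a^*_{r,r+1}$, $\sum_{i<r} a_{ir}a^*_{i,r+1}$, and $-\sum_{j\geq r+2} a_{r+1,j}a^*_{rj}$ from $\rho(E_r)$ against $\sum_{j<r} a_{j,r+1}a^*_{jr}$ from $\rho(F_r)$ produce the remaining $a a^*$ summands of $\rho(H_r)$; the $a^*_{r,r+1}\Phi(b_r)$ piece of $\rho(E_r)$, paired against $a_{r,r+1}$ in $\rho(F_r)$, delivers the $\Phi(b_r)$ summand of $\rho(H_r)$; and, crucially, $[\kappa\cdot D a^*_{r,r+1}{}_{\boldsymbol\lambda} a_{r,r+1}] = \kappa\cdot\boldsymbol\lambda$ via Lemma \ref{prelimlem}(c) supplies the central term, which since $\rho(K_l)=-\kappa_l$ and $A_{rr}=2$ equals $\frac{2}{A_{rr}}\sum_l \rho(K_l)\lambda_l$. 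Signs and multiplicities should combine to give exactly $-\rho(H_r)-\sum_l \rho(K_l)\lambda_l$.

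Finally the cases involving $0$ remain. The mixed case $[\rho(E_0){}_{\boldsymbol\lambda}\rho(F_r)] = [-a_{1,n+1}{}_{\boldsymbol\lambda}\rho(F_r)]$ for $r\neq 0$ vanishes because $\rho(F_r)$ contains no $a^*_{1,n+1}$. The symmetric case $[\rho(E_r){}_{\boldsymbol\lambda}\rho(F_0)]$ for $r\neq 0$ should likewise vanish, by a partition-bookkeeping argument parallel to the $[\rho(H_k){}_{\boldsymbol\lambda}\rho(F_0)]$ computation already performed, the key tool being the telescoping identity (\ref{equ:ID}). The principal obstacle is $r=s=0$: one must trace $[-a_{1,n+1}{}_{\boldsymbol\lambda}\rho(F_0)]$ through each of the three pieces $A$, $B$, $C$ of $\rho(F_0)$, exploiting that $q_{i+1}=n+1$ in every partition $\mathbf q$ so that $a_{1,n+1}$ contracts with a single designated $a^*_{r,n+1}$ in each summand, and then reassemble the result as $-\rho(H_0)-\sum_l \rho(K_l)\lambda_l$ using $\rho(H_0)=-\sum_{r=1}^n\rho(H_r)$. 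The most delicate step will be matching the coefficient of $\kappa\cdot\boldsymbol\lambda$, which must arise from the $C$ piece via Lemma \ref{prelimlem}(c) in precisely the right multiplicity.
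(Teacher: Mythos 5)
Your overall strategy (case analysis on $(r,s)$, reduction to the elementary contractions of Lemma~\ref{prelimlem} together with $[\Phi(b_r)_{\boldsymbol \lambda}\Phi(b_s)]=-A_{rs}\kappa\cdot{\boldsymbol \lambda}$) is the same as the paper's, and your $r=s\neq 0$ sketch and your dismissal of $[\rho(E_0)_{\boldsymbol \lambda}\rho(F_r)]$ for $r\neq 0$ are sound. There is, however, a concrete false step in the case $r\neq s$, $rs\neq 0$, $|r-s|>1$: the index pairs occurring in $\rho(E_s)$ and $\rho(F_r)$ are \emph{not} disjoint when $r\geq s+2$. Indeed $\rho(F_r)$ then contains the summands $-a_{s,r+1}a^*_{sr}$ and $-a_{s+1,r+1}a^*_{s+1,r}$ (the $j=s$ and $j=s+1$ terms of $-\sum_{j<r}a_{j,r+1}a^*_{jr}$), while $\rho(E_s)$ contains $-a_{s+1,r}a^*_{sr}$ and $-a_{s+1,r+1}a^*_{s,r+1}$ from its second sum and $\left(a_{sj}a^*_{sj}-a_{s+1,j}a^*_{s+1,j}\right)a^*_{s,s+1}$ at $j=r,\,r+1$ from its fourth sum. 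Several elementary contractions are therefore nonzero, and the bracket vanishes only after a genuine cancellation: the $j=s$ summand of $\rho(F_r)$ contributes $-a_{s+1,r+1}a^*_{sr}$ in total, which is cancelled by the $+a_{s+1,r+1}a^*_{sr}$ produced by the $j=s+1$ summand. So ``Lemma~\ref{prelimlem}(a) gives zero outright'' is not a proof here. (Disjointness does hold in the other direction $r<s-1$, since then every second index of a generator in $\rho(F_r)$ is at most $r+1<s$ while every second index in $\rho(E_s)$ is at least $s$.)

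A second, lesser point concerns where the real work lies. You name $r=s=0$ as the principal obstacle, but $[\rho(E_0)_{\boldsymbol \lambda}\rho(F_0)]=[-a_{1,n+1}\,{}_{\boldsymbol \lambda}\,\rho(F_0)]$ is comparatively tame, since $a_{1,n+1}$ can only contract with the terminal factors $a^*_{r,n+1}$ (forcing $r=1$) or with $a^*_{q_iq_{i+1}}$ when $(q_i,q_{i+1})=(1,n+1)$; the paper treats the whole family $[\rho(E_0)_{\boldsymbol \lambda}\rho(F_r)]$ as routine. The case that carries essentially all of the weight is $[\rho(E_s)_{\boldsymbol \lambda}\rho(F_0)]$ for $s>0$, which you dispatch in one sentence: there one must split the sums over partitions $\mathbf q$ according to whether $(s,s+1)$, $(k,s)$ or $(s+1,k)$ occurs as a consecutive pair in $\mathbf q$, and then show separately that the $\Phi(b)$-terms, the $\kappa\cdot D$-terms and the purely bosonic terms cancel. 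Your plan points in the right direction, but without this bookkeeping (and with the $|r-s|>1$ assertion corrected) it does not yet constitute a proof.
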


\begin{proof}
For $r\neq 0$ and $s\neq 0$, the proof is nearly the same as those in  \cite[Lemma 3.4]{MR2003g:17034} where
$$
-\gamma b_r(z)-\frac{1}{2}\left(b_{r-1}^+(z)+b_{r+1}^+(z)\right)
$$
 is replaced by $\Phi(b_r)$ and 
$$
-\frac{\gamma}{2}\dot a_{r,r+1}^*(z)
$$
is replace by $\kappa\cdot D a_{r,r+1}^*$. 
We refer the interested reader to that paper for the proof.

It is also straightforward to check that 
$$
 [\rho(E_0){_{\boldsymbol \lambda}}\rho(F_r)]=-\delta_{0,r}\left(\rho(H_{r})+\frac{2}{A_{r0}}\sum_{l=0}^N\rho(K_{l})\lambda_l.\right)
$$

Now we consider $[\rho(E_s){_{\boldsymbol \lambda}}\rho(F_0)]$ with $s>0$:   We break this up into pieces:
\begin{align*}
 -[a_{s,s+1}(a_{s,s+1}^*)^2&{_{\boldsymbol \lambda}}\sum_{1\leq r<j  \leq n+1} a_{rj}  \sum_{\mathbf q;j= q_i;r\geq q_{i-1}} \prod_{l=1}^{i-1} a_{q_l q_{l+1}}^* a_{r,n+1}^*  ] \\
&=-\delta_{s,n}a_{n,n+1}(a_{n,n+1}^*)^2   \sum_{\mathbf q;n+1= q_i} \prod_{l=1}^{i-1} a_{q_l q_{l+1}}^*       \\  
&\quad -\sum_{1\leq r<j  \leq n+1} a_{rj}  \sum_{\stack{\mathbf q;j= q_i;r\geq q_{i-1}}{ \exists t:(q_t,q_{t+1})=(s,s+1)}} \prod_{l=1,l\neq t}^{i-1} a_{q_l q_{l+1}}^*  (a_{q_t,q_{t+1}}^*)^2  
    a_{r,n+1}^*  \\ 
&\quad +2a_{s,s+1}a_{s,s+1}^*  \sum_{\mathbf q;s+1= q_i;s\geq q_{i-1}} \prod_{l=1}^{i-1} a_{q_l q_{l+1}}^* 
    a_{s,n+1}^*,
\end{align*}
\begin{align*}
    -[a_{s,s+1}(a_{s,s+1}^*)^2&{_{\boldsymbol \lambda}} \sum_{1 \leq r <n+1}\sum_{\mathbf q; r \geq  q_i} 
    \prod_{j=1}^{i} a_{q_{j}q{_{j +1} }}^*  \Phi(b_r)]  \\
 & = -\sum_{1 \leq r <n+1}\sum_{\stack{\mathbf q; r \geq  q_i}{\exists t:(q_t,q_{t+1})=(s,s+1)} }
    \prod_{j=1,j\neq t}^{i} a_{q_{j}q{_{j +1} }}^*   (a_{s,s+1}^*)^2 \Phi(b_r),  
\end{align*}
\begin{align*}
 -[a_{s,s+1}(a_{s,s+1}^*)^2&{_{\boldsymbol \lambda}}\sum_{1\leq r <n+1}\sum_{\mathbf q; r=q_i} 
   \prod_{j=1}^{i -1}a_{q_{j}q{_{j +1} }}^* \kappa \cdot Da_{r,n+1}^* ] \\
   & = -\sum_{1\leq r <n+1}\sum_{\stack{\mathbf q; r=q_i }{\exists t:(q_t,q_{t+1})=(s,s+1)} }
   \prod_{j=1,j\neq t}^{i -1}a_{q_{j}q{_{j +1} }}^*(a_{s,s+1}^*)^2 \kappa \cdot Da_{r,n+1}^*   \\ 
  & \quad  -\delta_{s,n} \sum_{\mathbf q; n=q_i} 
   \prod_{j=1}^{i -1}a_{q_{j}q{_{j +1} }}^*\kappa\cdot ({\boldsymbol \lambda} +D)(a_{n,n+1}^*)^2 .
  \end{align*} 
                      
The second summand in  $\rho(E_s)$ contributes the following:
\begin{align*}
 [ \sum_{k=s+2}^{n+1} a_{s+1,k}a_{sk}^*&{_{\boldsymbol \lambda}}\sum_{1\leq r<j  \leq n+1} a_{rj}  \sum_{\mathbf q;j= q_i;r\geq q_{i-1}} \left(\prod_{l=1}^{i-1} a_{q_l q_{l+1}}^*\right) a_{r,n+1}^*  ] \\
&=  -\sum_{k=s+2}^{n+1}a_{s+1,k}\sum_{\stack{\mathbf q: k=q_i } {s\geq q_{i-1} }  } \left(\prod_{l=1}^{i-1}a_{q_l,q_{l+1}}^*\right)a_{s,n+1}^*\\ 
&\quad  +\sum_{k=s+2}^{n+1}\sum_{1\leq r<j\leq n+1}a_{rj}\sum_{\stack{ \mathbf q: j=q_i }{r \geq q_{i-1},\exists t:(q_t,q_{t+1})=(s+1,k)} }
\left(\prod_{\stack{l=1}{l\neq t}}^{i-1} a_{q_l,q_{l+1}}^* \right)a_{s,k}^* a_{r,n+1}^*\\ 
&\quad  + \sum_{ s+1<j\leq n+1}a_{s+1,j}\sum_{\stack{\mathbf q: j=q_i } { s+1\geq q_{i-1}}}\left(\prod_{l=1}^{i-1}a_{q_l,q_{l+1}}^*\right)a_{s,n+1}^* ,
\end{align*}
\begin{align*}
   [ \sum_{k=s+2}^{n+1} a_{s+1,k}a_{sk}^*&{_{\boldsymbol \lambda}} \sum_{1 \leq r <n+1}\sum_{\mathbf q; r \geq  q_i} 
    \prod_{j=1}^{i} a_{q_{j}q{_{j +1} }}^*  \Phi(b_r)]  \\
 & = \sum_{k=s+2}^{n+1} \sum_{1 \leq r <n+1}\sum_{\stack{\mathbf q; r \geq  q_i }{ \exists t:(q_t,q_{t+1})=(s+1,k)} }
    \prod_{j=1,j\neq t}^{i} a_{q_{j}q{_{j +1} }}^*   a_{s,k}^* \Phi(b_r),
\end{align*}

\begin{align*}
 [ \sum_{k=s+2}^{n+1} a_{s+1,k}a_{sk}^*&{_{\boldsymbol \lambda}}\sum_{1\leq r <n+1}\sum_{\mathbf q; r=q_i} 
   \prod_{j=1}^{i -1}a_{q_{j}q{_{j +1} }}^* \kappa \cdot Da_{r,n+1}^* ] \\
   & = \sum_{k=s+2}^{n+1} \sum_{1\leq r <n+1}\sum_{\stack{\mathbf q; r=q_i }{ \exists t:(q_t,q_{t+1})=(s+1,k)} }
   \prod_{j=1,j\neq t}^{i -1}a_{q_{j}q{_{j +1} }}^*a_{sk}^* \kappa \cdot Da_{r,n+1}^*   \\ 
  & \quad  + (1-\delta_{s,n})\sum_{\mathbf q; s+1=q_i} 
   \prod_{j=1}^{i -1}a_{q_{j}q{_{j +1} }}^*\kappa\cdot (\boldsymbol \lambda +D)a_{s,n+1}^* .
\end{align*} 

Next we consider the third summation in $\rho(E_s)$:

\begin{align*}
 -&[\sum_{k=1}^{s-1}
      a_{ks}a^*_{k,s+1}{_{\boldsymbol \lambda}}\sum_{1\leq r<j  \leq n+1} a_{rj}  \sum_{\stack{\mathbf q;j= q_i}{r\geq q_{i-1}} }\left(\prod_{l=1}^{i-1} a_{q_l q_{l+1}}^*\right) a_{r,n+1}^*  ] \\   
& = \sum_{k=1}^{s-1}a_{ks}  \sum_{\stack{\mathbf q;s+1= q_i}{k\geq q_{i-1}} }\left(\prod_{l=1}^{i-1} a_{q_l q_{l+1}}^*\right) a_{k,n+1}^*\\ 
& -\sum_{k=1}^{s-1}\sum_{1\leq r<j  \leq n+1} a_{rj}  \sum_{\stack{\mathbf q;j= q_i;r\geq q_{i-1} }{
\exists t\leq i-1:(q_t,q_{t+1})=(k,s)} } \left(\prod_{l=1,l\neq t}^{i-1} a_{q_l q_{l+1}}^*\right) a_{k,s+1}^*a_{r,n+1}^*,
\end{align*}
 
\begin{align*}
  - &[ \sum_{k=1}^{s-1}
      a_{ks}a^*_{k,s+1}{_{\boldsymbol \lambda}} \sum_{1 \leq r <n+1}\sum_{\mathbf q; r \geq  q_i} 
    \prod_{j=1}^{i} a_{q_{j}q{_{j +1} }}^*  \Phi(b_r)]  
    \\
   &=- \sum_{k=1}^{s-1}
        \sum_{1 \leq r <n+1}\sum_{ \stack{\mathbf q;\, r \geq  q_i}{ \exists t\,:\,
       (q_t,q_{t+1})=(k,s) } }
   \left( \prod_{j=1,j\neq t}^{i} a_{q_{j}q{_{j +1} }}^* \right) a^*_{k,s+1} \Phi(b_r) ,
\end{align*}
 
\begin{align*}
- &[\sum_{k=1}^{s-1}
      a_{ks}a^*_{k,s+1} {_{\boldsymbol \lambda}}\sum_{1\leq r <n+1}\sum_{\stack{\mathbf q}{ r=q_i} }
   \prod_{j=1}^{i -1}a_{q_{j}q{_{j +1} }}^* \kappa \cdot Da_{r,n+1}^* ] \\
  = &-\sum_{k=1}^{s-1}
      \sum_{1\leq r <n+1}\sum_{ \stack{\mathbf q; r=q_i}{\exists t\,:\,
       (q_t,q_{t+1})=(k,s)} }
  \left( \prod_{j=1,j\neq t}^{i -1}a_{q_{j}q{_{j +1} }}^*  \right)a^*_{k,s+1} \kappa \cdot Da_{r,n+1}^*.
  \end{align*} 
  
The fourth summation in $\rho(E_s)$ ${\boldsymbol \lambda}$-brackets with the summands of $\rho(F_0)$ as follows:
\begin{align*}
 -& \sum_{k=s+2}^{n+1} \Big[\left(
    a_{sk}a_{sk}^*
        -a_{s+1,k}a_{s+1,k}^* \right)a_{s,s+1}^*{_{\boldsymbol \lambda}}   \sum_{1\leq r<j  \leq n+1} a_{rj}  \sum_{\stack{\mathbf q}{j= q_i;r\geq q_{i-1}} }\prod_{l=1}^{i-1} a_{q_l q_{l+1}}^* a_{r,n+1}^*  \Big]   \\
 = &\sum_{j=s+2}^{n+1}  a_{sj}  a_{s,s+1}^* 
        \sum_{\stack{\mathbf q}{j= q_i;s\geq q_{i-1}}} \prod_{l=1}^{i-1} a_{q_l q_{l+1}}^* a_{s,n+1}^*   
 -\sum_{j=s+2}^{n+1}   a_{s+1,j}   a_{s,s+1}^* 
        \sum_{\stack{\mathbf q}{j= q_i;s+1\geq q_{i-1}} }\prod_{l=1}^{i-1} a_{q_l q_{l+1}}^* a_{s+1,n+1}^*  \\ 
 & +\sum_{k=s+2}^{n+1}  \left(
    a_{sk}a_{sk}^*
        -a_{s+1,k}a_{s+1,k}^* \right) 
         \sum_{\stack{\mathbf q}{s+1= q_i }} \prod_{l=1}^{i-1} a_{q_l q_{l+1}}^* a_{s,n+1}^*   \\
& -\sum_{k=s+2}^{n+1} \sum_{1\leq r<j  \leq n+1} a_{rj} 
         \sum_{\stack{\mathbf q, j= q_i;r\geq q_{i-1}  }{ \exists t;\, (q_t,q_{t+1})=(s,k)}  }
           \prod_{l=1}^{i-1} a_{q_l q_{l+1}}^*  
        a_{s,s+1}^* a_{r,n+1}^*  \\ 
 &+\sum_{k=s+2}^{n+1} \sum_{1\leq r<j  \leq n+1} a_{rj} 
         \sum_{\stack{\mathbf q;j= q_i;r\geq q_{i-1}}{ \exists t;\, (q_t,q_{t+1})=(s+1,k)} }
          \prod_{l=1}^{i-1} a_{q_l q_{l+1}}^*  
        a_{s,s+1}^* a_{r,n+1}^*  \\ 
&  -   (1-\delta_{s,n})\sum_{ s<j  \leq n+1} a_{sj}  
        \sum_{\stack{ \mathbf q;j= q_i}{s\geq q_{i-1}} }\prod_{l=1}^{i-1} a_{q_l q_{l+1}}^*a_{s,n+1}^*a_{s,s+1}^*  \\ 
  &+   \sum_{s+1<j  \leq n+1} a_{s+1,j}  
        \sum_{\stack{\mathbf q; j= q_i}{s+1\geq q_{i-1}}} \prod_{l=1}^{i-1} a_{q_l q_{l+1}}^* a_{s+1,n+1}^*a_{s,s+1}^* .
\end{align*}
 \begin{align*} 
-\sum_{k=s+2}^{n+1} &\Big[\left(
    a_{sk}a_{sk}^*
        -a_{s+1,k}a_{s+1,k}^* \right)a_{s,s+1}^*{_{\boldsymbol \lambda}}   \sum_{1 \leq r <n+1}
        \sum_{\mathbf q; r \geq  q_i} 
    \prod_{j=1}^{i} a_{q_{j}q{_{j +1} }}^*  \Phi(b_r)  \Big]  \\
    &=-\sum_{k=s+2}^{n+1}    \sum_{1 \leq r <n+1}\sum_{\stack{\mathbf q; r \geq  q_i}{ \exists t;\, 
    (q_t,q_{t+1})=(s,k)   } }
    \prod_{j=1}^{i} a_{q_{j}q{_{j +1} }}^* a_{s,s+1}^*  \Phi(b_r)   \\
&+\sum_{k=s+2}^{n+1} \sum_{1 \leq r <n+1}\sum_{\mathbf q; r\geq q_i\atop \exists t;\, (q_t,q_{t+1})=(s+1,k)  } 
    \prod_{j=1}^{i} a_{q_{j}q{_{j +1} }}^*  a_{s,s+1}^* \Phi(b_r) .
\end{align*}
and
\begin{align*}
-&\sum_{k=s+2}^{n+1} \Big[\left(
    a_{sk}a_{sk}^*
        -a_{s+1,k}a_{s+1,k}^* \right)a_{s,s+1}^*{_{\boldsymbol \lambda}}  \sum_{1\leq r <n+1}\sum_{\mathbf q; r=q_i} 
   \prod_{j=1}^{i -1}a_{q_{j}q{_{j +1} }}^* \kappa \cdot Da_{r,n+1}^* \Big]  \\
&= -   \sum_{s\leq r <n+1}\sum_{k=s+2}^{r}\sum_{\mathbf q\ ; r=q_i, \exists t\leq i-1, (q_t,q_{t+1})=(s,k)} 
   \prod_{j=1}^{i -1}a_{q_{j}q{_{j +1} }}^*a_{s,s+1}^* \kappa \cdot Da_{r,n+1}^*  \\
&\quad+ \sum_{s+1\leq r <n+1}\sum_{k=s+2}^{r}\sum_{\mathbf q\ ; r=q_i, \exists t\leq i-1, (q_t,q_{t+1})=(s+1,k)} 
   \prod_{j=1}^{i -1}a_{q_{j}q{_{j +1} }}^*a_{s,s+1}^* \kappa \cdot Da_{r,n+1}^*  \\
&\quad -(1-\delta_{s,n})  \sum_{ \mathbf q; s=q_i} 
   \prod_{j=1}^{i -1}a_{q_{j}q{_{j +1} }}^* \kappa\cdot ({\boldsymbol \lambda} +D)(a_{s,n+1}^*
        a_{s,s+1}^*) \\
&\quad +(1-\delta_{s,n})  \sum_{ \mathbf q; s+1=q_i} 
   \prod_{j=1}^{i -1}a_{q_{j}q{_{j +1} }}^*\kappa\cdot( {\boldsymbol \lambda} +D)(a_{s+1,n+1}^*  a_{s,s+1}^*).
\end{align*}
Now we consider the last two summands in $\rho(E_s)$:
\begin{align*}
- &[(a_{s,s+1}^* \Phi(b_s)+\kappa\cdot Da_{s,s+1}^*){_{\boldsymbol \lambda}}\sum_{1\leq r<j  \leq n+1} a_{rj}  \sum_{\mathbf q;j= q_i;r\geq q_{i-1}} \left(\prod_{l=1}^{i-1} a_{q_l q_{l+1}}^*\right) a_{r,n+1}^*  ] \\ 
 &=   \sum_{\mathbf q;s+1= q_i;s\geq q_{i-1}} \left(\prod_{l=1}^{i-1} a_{q_l q_{l+1}}^*\right) a_{s,n+1}^* \Phi(b_s)  - \sum_{\mathbf q;s+1= q_i;s\geq q_{i-1}} \left(\prod_{l=1}^{i-1} a_{q_l q_{l+1}}^*\right) a_{s,n+1}^* \kappa\cdot {\boldsymbol \lambda} 
 \end{align*}
 and
 \begin{align*}
- [(a_{s,s+1}^* \Phi(b_s)+\kappa\cdot Da_{s,s+1}^*)&{_{\boldsymbol \lambda}}   \sum_{1 \leq r <n+1}
        \sum_{\mathbf q; r \geq  q_i} 
    \prod_{j=1}^{i} a_{q_{j}q{_{j +1} }}^*  \Phi(b_r)    ] \\ 
 &=   \sum_{1 \leq r <n+1}
        \sum_{\mathbf q; r \geq  q_i} 
    \prod_{j=1}^{i} a_{q_{j}q{_{j +1} }}^* A_{sr}\kappa\cdot ({\boldsymbol \lambda}+D)a_{s,s+1}^*   .
 \end{align*}
 
 Summing up the terms with $\Phi(b_k)$'s in them we get
 \begin{align*}
&-\sum_{1 \leq r <n+1}\sum_{\mathbf q; r \geq  q_i\atop \exists t:(q_t,q_{t+1})=(s,s+1)} 
    \prod_{j=1,j\neq t}^{i} a_{q_{j}q{_{j +1} }}^*   (a_{s,s+1}^*)^2 \Phi(b_r),  \\
&+\sum_{k=s+2}^{n+1} \sum_{1 \leq r <n+1}\sum_{\mathbf q; r \geq  q_i\atop \exists t:(q_t,q_{t+1})=(s+1,k)} 
    \prod_{j=1,j\neq t}^{i} a_{q_{j}q{_{j +1} }}^*   a_{s,k}^* \Phi(b_r) \\ 
&- \sum_{k=1}^{s-1}
        \sum_{1 \leq r <n+1}\sum_{ \mathbf q;\, r \geq  q_i\atop \exists t\,:\,
       (q_t,q_{t+1})=(k,s) } 
   \left( \prod_{j=1,j\neq t}^{i} a_{q_{j}q{_{j +1} }}^* \right) a^*_{k,s+1} \Phi(b_r) \\ 
  & -\sum_{k=s+2}^{n+1}    \sum_{1 \leq r <n+1}\sum_{\mathbf q; r \geq  q_i\atop \exists t;\enspace 
    (q_t,q_{t+1})=(s,k)   } 
    \prod_{j=1}^{i} a_{q_{j}q{_{j +1} }}^* a_{s,s+1}^*  \Phi(b_r)   \\
&+\sum_{k=s+2}^{n+1} \sum_{1 \leq r <n+1}\sum_{\mathbf q; r\geq q_i\atop \exists t;\enspace (q_t,q_{t+1})=(s+1,k)  } 
    \prod_{j=1}^{i} a_{q_{j}q{_{j +1} }}^*  a_{s,s+1}^* \Phi(b_r)  \\ 
 &+\sum_{\mathbf q;s+1= q_i;s\geq q_{i-1}} \left(\prod_{l=1}^{i-1} a_{q_l q_{l+1}}^*\right) a_{s,n+1}^* \Phi(b_s) \\  \\ 
&=-\sum_{r=s+1}^n\sum_{\mathbf q; r \geq  q_i\atop \exists t:(q_t,q_{t+1})=(s,s+1)} 
    \prod_{j=1 }^{i} a_{q_{j}q{_{j +1} }}^*   a_{s,s+1}^* \Phi(b_r),  \\
&\quad+\sum_{k=s+2}^{n+1} \sum_{r=s+1}^n\sum_{\mathbf q; r \geq  q_i\atop \exists t:(q_t,q_{t+1})=(s+1,k)} 
    \prod_{j=1,j\neq t}^{i} a_{q_{j}q{_{j +1} }}^*   a_{s,k}^* \Phi(b_r) \\ 
& \quad- \sum_{k=1}^{s-1}\sum_{r=s+1}^n\sum_{ \mathbf q;\, r \geq  q_i\atop \exists t\,:\,
       (q_t,q_{t+1})=(k,s) } 
   \left( \prod_{j=1,j\neq t}^{i} a_{q_{j}q{_{j +1} }}^* \right) a^*_{k,s+1} \Phi(b_r) \\ 
& \quad- \sum_{k=1}^{s-1}
         \sum_{ \mathbf q;\, s = q_i } 
   \left( \prod_{j=1}^{i-1} a_{q_{j}q{_{j +1} }}^* \right) a^*_{k,s+1} \Phi(b_s)
    \\ 
  & \quad -  \sum_{  \mathbf q,s=  q_i ; 
    (q_i,q_{i+1})=(s,n+1)   } 
    \prod_{j=1}^{i} a_{q_{j}q{_{j +1} }}^* a_{s,s+1}^*  \Phi(b_s)   \\
 & \quad -\sum_{k=s+2}^{n+1}    \sum_{r=s+1}^n\sum_{\mathbf q; r \geq  q_i\atop \exists t;\enspace 
    (q_t,q_{t+1})=(s,k)   } 
    \prod_{j=1}^{i} a_{q_{j}q{_{j +1} }}^* a_{s,s+1}^*  \Phi(b_r)   \\
& \quad +\sum_{k=s+2}^{n+1} \sum_{r =s+1}^n\sum_{\mathbf q; r\geq q_i\atop \exists t;\enspace (q_t,q_{t+1})=(s+1,k)  } 
    \prod_{j=1}^{i} a_{q_{j}q{_{j +1} }}^*  a_{s,s+1}^* \Phi(b_r)  \\ 
 & \quad +\sum_{\mathbf q;s+1= q_i;s\geq q_{i-1}} \left(\prod_{l=1}^{i-1} a_{q_l q_{l+1}}^*\right) a_{s,n+1}^* \Phi(b_s) \\
\end{align*}
\begin{align*}
&=-\sum_{r=s+1}^n\sum_{\mathbf q; r \geq  q_i\atop \exists t:(q_t,q_{t+1})=(s,s+1)} 
    \prod_{j=1 }^{i} a_{q_{j}q{_{j +1} }}^*   a_{s,s+1}^* \Phi(b_r),  \\
& \quad +\sum_{k=s+2}^{n+1} \sum_{r=s+1}^n\sum_{\mathbf q; r \geq  q_i\atop \exists t:(q_t,q_{t+1})=(s+1,k)} 
    \prod_{j=1,j\neq t}^{i} a_{q_{j}q{_{j +1} }}^*   a_{s,k}^* \Phi(b_r) \\ 
& \quad- \sum_{k=1}^{s-1}\sum_{r=s+1}^n\sum_{ \mathbf q;\, r \geq  q_i\atop \exists t\,:\,
       (q_t,q_{t+1})=(k,s) } 
   \left( \prod_{j=1,j\neq t}^{i} a_{q_{j}q{_{j +1} }}^* \right) a^*_{k,s+1} \Phi(b_r) \\ 
 & \quad -\sum_{k=s+2}^{n+1}    \sum_{r=s+1}^n\sum_{\mathbf q; r \geq  q_i\atop \exists t;\enspace 
    (q_t,q_{t+1})=(s,k)   } 
    \prod_{j=1}^{i} a_{q_{j}q{_{j +1} }}^* a_{s,s+1}^*  \Phi(b_r)   \\
& \quad +\sum_{k=s+2}^{n+1} \sum_{r =s+1}^n\sum_{\mathbf q; r\geq q_i\atop \exists t;\enspace (q_t,q_{t+1})=(s+1,k)  } 
    \prod_{j=1}^{i} a_{q_{j}q{_{j +1} }}^*  a_{s,s+1}^* \Phi(b_r)  \\ 
& \quad- \sum_{k=1}^{s-1}
         \sum_{ \mathbf q;\, s = q_i } 
   \left( \prod_{j=1}^{i-1} a_{q_{j}q{_{j +1} }}^* \right) a^*_{k,s+1} \Phi(b_s)
    \\ 
& \quad-   \sum_{ \mathbf q,s=  q_i  ; 
    (q_i,q_{i+1})=(s,n+1)   } 
    \prod_{j=1}^{i} a_{q_{j}q{_{j +1} }}^* a_{s,s+1}^*  \Phi(b_s)   \\
& \quad +\sum_{\mathbf q;s+1= q_i;s\geq q_{i-1}} \left(\prod_{l=1}^{i-1} a_{q_l q_{l+1}}^*\right) a_{s,n+1}^* \Phi(b_s) \\  \\
&=-\sum_{r=s+1}^n\sum_{\mathbf q; r \geq  q_i } a_{1,q_1}^*
    \cdots a_{q_{t-1}s}^* a_{s,s+1 }^* a_{s+1,q{_{j +1} }}^* \cdots a_{q_{i},n+1}^*  a_{s,s+1}^* \Phi(b_r),  \\
& \quad + \sum_{r=s+1}^n \sum_{k=s+2}^{n+1}\sum_{\mathbf q; r \geq  q_i } 
     a_{1,q_1}^*\cdots a_{q_{t-1}s+1}^*   a_{k,q{_{t +2} }}^* \cdots a_{q_{i},n+1}^*   a_{s,k}^* \Phi(b_r) \\ 
& \quad- \sum_{r=s+1}^n\sum_{k=1}^{s-1}\sum_{ \mathbf q;\, r \geq  q_i\atop \exists t\,:\,
       (q_t,q_{t+1})=(k,s) } 
     a_{1,q_1}^*\cdots a_{q_{t-1}k}^*   a_{s,q{_{t +2} }}^* \cdots a_{q_{i},n+1}^*a^*_{k,s+1} \Phi(b_r) \\ 
 & \quad-  \sum_{r=s+1}^n\sum_{k=s+2}^{n+1}  \sum_{\mathbf q; r \geq  q_i } 
   	a_{1,q_1}^*\cdots a_{s,k}^*    \cdots a_{q_{i},n+1}^* a_{s,s+1}^*  \Phi(b_r)   \\
& \quad + \sum_{r =s+1}^n\sum_{k=s+2}^{n+1}\sum_{\mathbf q; r\geq q_i  } 
     	a_{1,q_1}^*\cdots a_{s+1,k}^*    \cdots
    a_{q_{i},n+1}^*  a_{s,s+1}^* \Phi(b_r)  
\end{align*}
\begin{align*}
&= \sum_{r=s+1}^n \sum_{k=s+2}^{n+1}\sum_{\mathbf q; r \geq  q_i } 
     a_{1,q_1}^*\cdots a_{q_{t-1}s+1}^*   a_{k,q{_{t +2} }}^* \cdots a_{q_{i},n+1}^*   a_{s,k}^* \Phi(b_r) \\ 
& \quad- \sum_{r=s+1}^n\sum_{k=1}^{s-1}\sum_{ \mathbf q;\, r \geq  q_i } 
     a_{1,q_1}^*\cdots a_{q_{t-1}k}^*   a_{s,q{_{t +2} }}^* \cdots a_{q_{i},n+1}^*a^*_{k,s+1} \Phi(b_r) \\ 
 & \quad -  \sum_{r=s+1}^n\sum_{k=s+2}^{n+1}  \sum_{\mathbf q; r \geq  q_i } 
   	a_{1,q_1}^*\cdots a_{s,k}^*    \cdots a_{q_{i},n+1}^* a_{s,s+1}^*  \Phi(b_r)   \\
& \quad + \sum_{r =s+1}^n\sum_{k=1}^{s-1}\sum_{\mathbf q; r\geq q_i  } 
     	a_{1,q_1}^*\cdots a_{q_{t-1},k}^*a_{k,s+1}^*     \cdots
    a_{q_{i},n+1}^*  a_{s,s+1}^* \Phi(b_r)   \\  \\
&= \sum_{r=s+1}^n \sum_{k=s+2}^{n+1}\sum_{\mathbf q; r \geq  q_i } 
     a_{1,q_1}^*\cdots a_{q_{t-1}s+1}^*   a_{k,q{_{t +2} }}^* \cdots a_{q_{i},n+1}^*   a_{s,k}^* \Phi(b_r) \\ 
&- \sum_{r=s+1}^n\sum_{k=1}^{s-1}\sum_{l=k+2}^{n+1}\sum_{ \mathbf q;\, r \geq  q_i } 
     a_{1,q_1}^*\cdots a_{q_{t-1}k}^*   a_{s,l}^* \cdots a_{q_{i},n+1}^*a^*_{k,s+1} \Phi(b_r) \\ 
&- \sum_{r=s+1}^n\sum_{k=1}^{s-1} \sum_{ \mathbf q;\, r \geq  q_i } 
     a_{1,q_1}^*\cdots a_{q_{t-1}k}^*   a_{s,s+1}^* a_{s+1,q_{t+2}}^*\cdots a_{q_{i},n+1}^*a^*_{k,s+1} \Phi(b_r) \\ 
 & -  \sum_{r=s+1}^n\sum_{k=s+2}^{n+1}  \sum_{\mathbf q; r \geq  q_i } 
   	a_{1,q_1}^*\cdots a_{s,k}^*    \cdots a_{q_{i},n+1}^* a_{s,s+1}^*  \Phi(b_r)   \\
&+ \sum_{r =s+1}^n\sum_{k=1}^{s-1}\sum_{\mathbf q; r\geq q_i  } 
     	a_{1,q_1}^*\cdots a_{q_{t-1},k}^*a_{k,s+1}^*     \cdots
    a_{q_{i},n+1}^*  a_{s,s+1}^* \Phi(b_r)   \\ 
    \end{align*}
    \begin{align*}
&= \sum_{r=s+1}^n \sum_{k=s+2}^{n+1} \sum_{\mathbf q; r \geq  q_i } 
     a_{1,q_1}^*\cdots a_{s,s+1}^*   a_{k,q{_{t +2} }}^* \cdots a_{q_{i},n+1}^*   a_{s,k}^* \Phi(b_r) \\ 
&+ \sum_{r=s+1}^n \sum_{k=s+2}^{n+1}\sum_{l=1}^{s-1}\sum_{\mathbf q; r \geq  q_i } 
     a_{1,q_1}^*\cdots a_{ls+1}^*   a_{k,q{_{t +2} }}^* \cdots a_{q_{i},n+1}^*   a_{s,k}^* \Phi(b_r) \\ 
&- \sum_{r=s+1}^n\sum_{k=1}^{s-1}\sum_{l=s+2}^{n+1}\sum_{ \mathbf q;\, r \geq  q_i } 
     a_{1,q_1}^*\cdots a_{q_{t-1}k}^*   a_{s,l}^* \cdots a_{q_{i},n+1}^*a^*_{k,s+1} \Phi(b_r) \\ 
 & -  \sum_{r=s+1}^n\sum_{k=s+2}^{n+1}  \sum_{\mathbf q; r \geq  q_i } 
   	a_{1,q_1}^*\cdots a_{s,k}^*    \cdots a_{q_{i},n+1}^* a_{s,s+1}^*  \Phi(b_r)   \\ 
&= \sum_{r=s+1}^n \sum_{k=s+2}^{n+1} \sum_{l=1}^{s-1}\sum_{\mathbf q; r \geq  q_i } 
     a_{1,q_1}^*\cdots a_{l,s}^*a_{s,s+1}^*   a_{k,q{_{t +2} }}^* \cdots a_{q_{i},n+1}^*   a_{s,k}^* \Phi(b_r) \\ 
&+ \sum_{r=s+1}^n \sum_{k=s+2}^{n+1}\sum_{l=1}^{s-1}\sum_{\mathbf q; r \geq  q_i } 
     a_{1,q_1}^*\cdots  a_{q_{t-1}l}^* a_{ls+1}^*   a_{k,q{_{t +2} }}^* \cdots a_{q_{i},n+1}^*   a_{s,k}^* \Phi(b_r) \\ 
&- \sum_{r=s+1}^n\sum_{l=1}^{s-1}\sum_{k=s+2}^{n+1}\sum_{ \mathbf q;\, r \geq  q_i } 
     a_{1,q_1}^*\cdots a_{q_{t-1}l}^*   a_{s,k}^*a_{k,q{_{t +2} }}^* \cdots a_{q_{i},n+1}^*a^*_{l,s+1} \Phi(b_r) \\ 
 & -  \sum_{r=s+1}^n\sum_{k=s+2}^{n+1}  \sum_{l=1}^{s-1}\sum_{\mathbf q; r \geq  q_i } 
   	a_{1,q_1}^*\cdots a_{l,s}^*a_{s,k}^*  a_{k,q_{t+2}}^*    \cdots a_{q_{i},n+1}^* a_{s,s+1}^*  \Phi(b_r)  =0. 
\end{align*}

Next we consider the terms with the $\kappa\cdot {\boldsymbol \lambda}$'s and $D$'s in them (here $A_{1,0}:=0=A_{n+1,n+2}:=0$):
\begin{align*}
 &  -\sum_{s+1\leq r <n+1}\sum_{\mathbf q; r=q_i\atop \exists t:(q_t,q_{t+1})=(s,s+1)} 
   	\prod_{j=1,j\neq t}^{i -1}a_{q_{j}q{_{j +1} }}^*(a_{s,s+1}^*)^2 \kappa \cdot Da_{r,n+1}^*   \\ 
& \quad  -\delta_{s,n} \sum_{\mathbf q; n=q_i} 
   	\prod_{j=1}^{i -1}a_{q_{j}q{_{j +1} }}^*\kappa\cdot ({\boldsymbol \lambda} +D)(a_{n,n+1}^*)^2  \\
& \quad +\sum_{s+2\leq r <n+1} \sum_{k=s+2}^{r} 
	\sum_{\mathbf q; r=q_i\atop \exists t:(q_t,q_{t+1})=(s+1,k)} 
   	\prod_{j=1,j\neq t}^{i -1}a_{q_{j}q{_{j +1} }}^*a_{sk}^* \kappa \cdot Da_{r,n+1}^*   \\ 
& \quad  +  (1-\delta_{s,n}) \sum_{\mathbf q; s+1=q_i} 
  	 \prod_{j=1}^{i -1}a_{q_{j}q{_{j +1} }}^*\kappa\cdot ( {\boldsymbol \lambda} +D)a_{s,n+1}^*  \\ 
& \quad-
      \sum_{s\leq r \leq n+1}\sum_{k=1}^{s-1}\sum_{ \mathbf q; r=q_i,\atop \exists t\leq i-1\,:\,
       (q_t,q_{t+1})=(k,s)} 
  \left( \prod_{j=1,j\neq t}^{i -1}a_{q_{j}q{_{j +1} }}^*  \right)a^*_{k,s+1} \kappa \cdot Da_{r,n+1}^* \\
&\quad- \sum_{s+2\leq r <n+1} \sum_{k=s+2}^{r}\sum_{\mathbf q\ ; r=q_i, \exists t\leq i-1, (q_t,q_{t+1})=(s,k)} 
   \prod_{j=1}^{i -1}a_{q_{j}q{_{j +1} }}^*a_{s,s+1}^* \kappa \cdot Da_{r,n+1}^*  \\
&\quad+  \sum_{s+2\leq r <n+1} \sum_{k=s+2}^{r}\sum_{\mathbf q\ ; r=q_i, \exists t\leq i-1, (q_t,q_{t+1})=(s+1,k)} 
   \prod_{j=1}^{i -1}a_{q_{j}q{_{j +1} }}^*a_{s,s+1}^* \kappa \cdot Da_{r,n+1}^*  \\
&\quad -  (1-\delta_{s,n}) \sum_{ \mathbf q; s=q_i} 
  	 \prod_{j=1}^{i -1}a_{q_{j}q{_{j +1} }}^* \kappa\cdot( {\boldsymbol \lambda} +D)(a_{s,n+1}^* a_{s,s+1}^*) \\
&\quad  +(1-\delta_{s,n})   \sum_{ \mathbf q; s+1=q_i} 
   	\prod_{j=1}^{i -1}a_{q_{j}q{_{j +1} }}^*\kappa\cdot( {\boldsymbol \lambda} +D)(a_{s+1,n+1}^*  a_{s,s+1}^*) \\ 
&\quad  - \sum_{\mathbf q;s+1= q_i;s\geq q_{i-1}} 
	\left(\prod_{l=1}^{i-1} a_{q_l q_{l+1}}^*\right) a_{s,n+1}^* \kappa\cdot {\boldsymbol \lambda}   \\
&\quad+  
        \sum_{\mathbf q; s-1 \geq  q_i } 
    \prod_{j=1}^{i} a_{q_{j}q{_{j +1} }}^* A_{s,s-1}\kappa\cdot({\boldsymbol \lambda}+D)a_{s,s+1}^*    \\
&\quad+2 
        \sum_{ \mathbf q; s \geq  q_i } 
    \prod_{j=1}^{i} a_{q_{j}q{_{j +1} }}^* \kappa\cdot({\boldsymbol \lambda}+D)a_{s,s+1}^*    \\
&\quad+ 
        \sum_{ s+1 \geq  q_i ,\mathbf q} 
    \prod_{j=1}^{i} a_{q_{j}q{_{j +1} }}^* A_{s,s+1}\kappa\cdot({\boldsymbol \lambda}+D)a_{s,s+1}^*   .
\end{align*} 
\begin{align*}
 & = - \sum_{\mathbf q; s+1=q_i }
   	 a_{1,q_2}^*\cdots a_{q_{i-2},s}^*(a_{s,s+1}^*)^2 \kappa \cdot Da_{s+1,n+1}^*   \\ 
&\quad -\sum_{s+2\leq r <n+1}\sum_{\mathbf q; r=q_i\atop \exists t:(q_t,q_{t+1})=(s,s+1)} 
   	 a_{1,q_2}^*\cdots a_{q_{i-2},s}^*a_{s,s+1}^*\cdots a_{q_{i-1}r}^*a_{s,s+1}^* \kappa \cdot Da_{r,n+1}^*   \\ 
& \quad  -\delta_{s,n} \sum_{\mathbf q; n=q_i} 
   	\prod_{j=1}^{i -1}a_{q_{j}q{_{j +1} }}^*\kappa\cdot (\mathbf {\boldsymbol \lambda} +D)(a_{n,n+1}^*)^2  \\
& \quad +\sum_{s+2\leq r <n+1} \sum_{k=s+2}^{r} \sum_{l=1}^{s-1}
	\sum_{\mathbf q; r=q_i\atop \exists t:(q_t,q_{t+1})=(s+1,k)} a_{1,q_2}^*\cdots a_{l,s+1}^*
	a_{k,q_{t+2}}^*\cdots a_{q_{i-1},r}^* a_{sk}^* \kappa \cdot Da_{r,n+1}^*   \\  
& \quad +\sum_{s+2\leq r <n+1} \sum_{k=s+2}^{r} 
	\sum_{\mathbf q; r=q_i\atop \exists t:(q_t,q_{t+1})=(s+1,k)} a_{1,q_2}^*\cdots a_{q_{t-2},s}^*
	a_{s,s+1}^*
	a_{k,q_{t+2}}^*\cdots a_{q_{i-1},r}^* a_{sk}^* \kappa \cdot Da_{r,n+1}^*   \\  
& \quad  +  (1-\delta_{s,n}) \sum_{\mathbf q; s+1=q_i} 
  	 \prod_{j=1}^{i -1}a_{q_{j}q{_{j +1} }}^*\kappa\cdot (   {\boldsymbol \lambda} +D)a_{s,n+1}^*  \\  
& \quad-
      \sum_{s+2\leq r \leq n+1}\sum_{k=1}^{s-1}\sum_{ \mathbf q; r=q_i,\atop \exists t\leq i-1\,:\,
       (q_t,q_{t+1})=(k,s)} 
  \left( \prod_{j=1,j\neq t}^{i -1}a_{q_{j}q{_{j +1} }}^*  \right)a^*_{k,s+1} \kappa \cdot Da_{r,n+1}^* \\
& \quad-
     \sum_{k=1}^{s-1}\sum_{ \mathbf q; s=q_i,\atop \exists t\leq i-1\,:\,
       (q_t,q_{t+1})=(k,s)} 
  \left( \prod_{j=1,j\neq t}^{i -1}a_{q_{j}q{_{j +1} }}^*  \right)a^*_{k,s+1} \kappa \cdot Da_{s,n+1}^* \\
& \quad-
      \sum_{k=1}^{s-1}\sum_{ \mathbf q; s+1=q_i,\atop \exists t\leq i-1\,:\,
       (q_t,q_{t+1})=(k,s)} 
  \left( \prod_{j=1,j\neq t}^{i -1}a_{q_{j}q{_{j +1} }}^*  \right)a^*_{k,s+1} \kappa \cdot Da_{s+1,n+1}^* \\
&\quad- \sum_{s+2\leq r <n+1} \sum_{k=s+2}^{r}
	\sum_{\mathbf q\ ; r=q_i, \exists t\leq i-1, (q_t,q_{t+1})=(s,k)} 
   	\prod_{j=1}^{i -1}a_{q_{j}q{_{j +1} }}^*a_{s,s+1}^* \kappa \cdot Da_{r,n+1}^*  \\ \\
&\quad+  \sum_{s+2\leq r <n+1}
	 \sum_{k=s+2}^{r}a_{1q_2}^*\cdots 
	a_{s+1,k}^*a_{k,q{_{t=2} }}^*\cdots 
	a_{q_{i-1},r}^*a_{s,s+1}^* \kappa \cdot Da_{r,n+1}^*  \\
&\quad -  (1-\delta_{s,n}) \sum_{ \mathbf q; s=q_i} 
  	 \prod_{j=1}^{i -1}a_{q_{j}q{_{j +1} }}^* a_{s,n+1}^*\kappa\cdot( {\boldsymbol \lambda} +D)(
   	a_{s,s+1}^*) \\
	& \quad -  (1-\delta_{s,n})  \sum_{ \mathbf q; s=q_i} 
  	 \prod_{j=1}^{i -1}a_{q_{j}q{_{j +1} }}^*a_{s,s+1}^* \kappa\cdot D(a_{s,n+1}^*)\\
&\quad  + (1-\delta_{s,n})  \sum_{ \mathbf q; s+1=q_i} 
   	\prod_{j=1}^{i -1}a_{q_{j}q{_{j +1} }}^*a_{s+1,n+1}^* \kappa\cdot( {\boldsymbol \lambda} +D) (a_{s,s+1}^*) \\
&\quad	+  (1-\delta_{s,n}) \sum_{ \mathbf q; s+1=q_i} 
   	\prod_{j=1}^{i -1}a_{q_{j}q{_{j +1} }}^*a_{s,s+1}^*\kappa\cdot D(a_{s+1,n+1}^*  ) \\ 
&\quad  - \sum_{\mathbf q;s+1= q_i;s\geq q_{i-1}} 
	\left(\prod_{l=1}^{i-1} a_{q_l q_{l+1}}^*\right) a_{s,n+1}^* \kappa\cdot {\boldsymbol \lambda}   +  
        \sum_{\mathbf q; s-1 \geq  q_i } 
    \prod_{j=1}^{i} a_{q_{j}q{_{j +1} }}^* A_{s,s-1}\kappa\cdot({\boldsymbol \lambda}+D)a_{s,s+1}^*    \\
&\quad+2 
        \sum_{ \mathbf q; s \geq  q_i } 
    \prod_{j=1}^{i} a_{q_{j}q{_{j +1} }}^* \kappa\cdot({\boldsymbol \lambda}+D)a_{s,s+1}^*   + 
        \sum_{ s+1 \geq  q_i ,\mathbf q} 
    \prod_{j=1}^{i} a_{q_{j}q{_{j +1} }}^* A_{s,s+1}\kappa\cdot({\boldsymbol \lambda}+D)a_{s,s+1}^*   .
\end{align*} 

We break the above into three summations; collecting terms with  $\kappa\cdot Da_{s+1,n+1}$,  with $\kappa\cdot Da_{s,n+1}$, with  $\kappa \cdot Da_{r,n+1}, r\geq s+2$ and over $\kappa\cdot Da_{s,s+1}$ (there is some overlap with $\kappa\cdot Da_{s,n+1}$ and $\kappa\cdot Da_{s,s+1}$ when $s=n$ ):
\begin{align*}
 - \sum_{\mathbf q\atop  s+1=q_i }&
   	 a_{1,q_2}^*\cdots a_{q_{i-2},s}^*(a_{s,s+1}^*)^2 \kappa \cdot Da_{s+1,n+1}^*  \\
	  &-
     \sum_{ \mathbf q\atop s+1=q_i} \sum_{k=1}^{s-1} a_{1q_2}^*\cdots a_{q_{t-1}k}^*a_{s,s+1}^* a^*_{k,s+1} \kappa \cdot Da_{s+1,n+1}^* \\
& +   \sum_{ \mathbf q,s+1=q_i}  a_{1q_2}^*\cdots a_{q_{i-1},s+1}^*a_{s,s+1}^*\kappa\cdot D(a_{s+1,n+1}^*  ) 
 =0 .
\end{align*} 
Next we have summands with $ (  {\boldsymbol \lambda} +D)a_{s,n+1}^*$ in them:
\begin{align*}
    (1&-\delta_{s,n})  \sum_{\mathbf q; s+1=q_i} 
  	 \prod_{j=1}^{i -1}a_{q_{j}q{_{j +1} }}^*\kappa\cdot ( {\boldsymbol \lambda} +D)a_{s,n+1}^*  \\
	&  -
     \sum_{k=1}^{s-1}\sum_{ \mathbf q; s=q_i,\atop \exists t\leq i-1\,:\,
       (q_t,q_{t+1})=(k,s)} 
 \prod_{j=1,j\neq t}^{i -1}a_{q_{j}q{_{j +1} }}^*  a^*_{k,s+1} \kappa \cdot Da_{s,n+1}^* \\
&  \ -  (1-\delta_{s,n})\sum_{ \mathbf q; s=q_i} 
  	 \prod_{j=1}^{i -1}a_{q_{j}q{_{j +1} }}^*a_{s,s+1}^* \kappa\cdot D(a_{s,n+1}^*)  -  \sum_{\stack{\mathbf q;s+1= q_i}{s\geq q_{i-1}}   }
	\left(\prod_{l=1}^{i-1} a_{q_l q_{l+1}}^*\right) a_{s,n+1}^* \kappa\cdot {\boldsymbol \lambda}  \\   \\
 & =   \sum_{\mathbf q \atop s+1=q_i} a_{1,q_2}^*\cdots a_{q_{i-1},s+1}^*
	\kappa\cdot (  {\boldsymbol \lambda} +D)a_{s,n+1}^*  -
     \sum_{k=1}^{s-1}\sum_{\stack{ \mathbf q; s=q_i, \exists t\leq i-1\,:\,}{
       (q_t,q_{t+1})=(k,s)}} a_{1,q_2}\cdots a_{q_{i-2}k}^* a^*_{k,s+1} \kappa \cdot Da_{s,n+1}^* \\
&\  -  \sum_{ \mathbf q; s=q_i} 
  	a_{1,q_2}^*\cdots a_{q_{i-1},s}^*a_{s,s+1}^* \kappa\cdot D(a_{s,n+1}^*)  - \sum_{\mathbf q;s+1= q_i} a_{1,q_2}^*\cdots a_{q_{i-1},s+1}^*
	 a_{s,n+1}^* \kappa\cdot {\boldsymbol \lambda} \\
& \   -\delta_{s,n} \left( \sum_{\mathbf q; s+1=q_i} 
  	 \prod_{j=1}^{i -1}a_{q_{j}q{_{j +1} }}^*\kappa\cdot ( {\boldsymbol \lambda} +D)a_{s,n+1}^*-\sum_{\stack{\mathbf q;s= q_i}{s\geq q_{i-1}}   }
	\left(\prod_{l=1}^{i-1} a_{q_l q_{l+1}}^*\right) a_{s,s+1}^* \kappa\cdot D  (a_{s,n+1}^*)\right)  \\ \\
& =     -\delta_{s,n} \left( \sum_{\mathbf q; n+1=q_i} 
  	 \prod_{j=1}^{i -1}a_{q_{j}q{_{j +1} }}^*\kappa\cdot (  {\boldsymbol \lambda} +D)a_{n,n+1}^*-\sum_{\mathbf q;n= q_i}   
	\left(\prod_{l=1}^{i-1} a_{q_l q_{l+1}}^*\right) a_{n,n+1}^* \kappa\cdot D  (a_{n,n+1}^*)\right).
\end{align*} 

Moreover the summands with $( {\boldsymbol \lambda} +D) a_{s,s+1}^*$ sum to 
\begin{align*}
  - & \delta_{s,n}  \sum_{\mathbf q; n=q_i} 
   	\prod_{j=1}^{i -1}a_{q_{j}q{_{j +1} }}^*\kappa\cdot ( {\boldsymbol \lambda} +D)(a_{n,n+1}^*)^2  
	 -(1-\delta_{s,n})\sum_{ \stack{\mathbf q;}{s=q_i}} a_{1,q_2}^*\cdots a_{q_{i-1},s}^*  
	a_{s,n+1}^*\kappa\cdot( {\boldsymbol \lambda} +D)(a_{s,s+1}^*) \\
&\   +(1-\delta_{s,n})  \sum_{ \stack{\mathbf q;}{ s+1=q_i} }
   	a_{1,q_2}^*\cdots a_{q_{i-1},s+1}^* a_{s+1,n+1}^* \kappa\cdot( {\boldsymbol \lambda} +D) (a_{s,s+1}^*)   \\
	&\ - 
        \sum_{\stack{\mathbf q;}{ s-1 \geq  q_i } }a_{1,q_2}^*\cdots a_{q_{i},n+1}^* \kappa\cdot({\boldsymbol \lambda}+D)(a_{s,s+1}^*)    \\
&\ +2 
        \sum_{\stack{ \mathbf q; }{s \geq  q_i } } a_{1,q_2}^*\cdots a_{q_{i},n+1}^* 
     \kappa\cdot({\boldsymbol \lambda}+D)(a_{s,s+1}^*)    +(\delta_{s,n}-1)
        \sum_{ s+1 \geq  q_i ,\mathbf q} 
    a_{1,q_2}^*\cdots a_{q_{i},n+1}^*  \kappa\cdot({\boldsymbol \lambda}+D)(a_{s,s+1}^*)  \\   
 =&  -  \sum_{ \mathbf q\atop s=q_i} a_{1,q_2}^*\cdots a_{q_{i-1},s}^*  
 	a_{s,n+1}^*\kappa\cdot( {\boldsymbol \lambda} +D)(a_{s,s+1}^*)   +  \sum_{ \mathbf q\atop s+1=q_i} 
   	a_{1,q_2}^*\cdots a_{q_{i-1},s+1}^* a_{s+1,n+1}^* \kappa\cdot( {\boldsymbol \lambda} +D) (a_{s,s+1}^*)  \\ 
&\  -  
        \sum_{\mathbf q\atop s-1 \geq  q_i } a_{1,q_2}^*\cdots a_{q_{i},n+1}^* \kappa\cdot({\boldsymbol \lambda}+D)(a_{s,s+1}^*) +
        \sum_{ \mathbf q; s \geq  q_i } a_{1,q_2}^*\cdots a_{q_{i},n+1}^* 
     \kappa\cdot({\boldsymbol \lambda}+D)(a_{s,s+1}^*)    \\
&  \ - 
        \sum_{ \mathbf q; s+1=  q_i } 
    a_{1,q_2}^*\cdots a_{q_{i-1},s+1}^*a_{s+1,n+1}  \kappa\cdot({\boldsymbol \lambda}+D)(a_{s,s+1}^*)    \\
& \ -\delta_{s,n} \sum_{\mathbf q; n=q_i} 
   	\prod_{j=1}^{i -1}a_{q_{j}q{_{j +1} }}^*\kappa\cdot (  {\boldsymbol \lambda} +D)(a_{n,n+1}^*)^2  
	 +\delta_{s,n}\sum_{ \mathbf q; s=q_i} a_{1,q_2}^*\cdots a_{q_{i-1},s}^*  
	a_{s,n+1}^*\kappa\cdot( {\boldsymbol \lambda} +D)(a_{s,s+1}^*) \\
&\    -\delta_{s,n}  \sum_{ \mathbf q; s+1=q_i} 
   	a_{1,q_2}^*\cdots a_{q_{i-1},s+1}^* a_{s+1,n+1}^* \kappa\cdot( {\boldsymbol \lambda} +D) (a_{s,s+1}^*)    \\
	& \quad +\delta_{s,n} 
        \sum_{ \stack{ \mathbf q}{s+1 \geq  q_i}} 
    a_{1,q_2}^*\cdots a_{q_{i},n+1}^*  \kappa\cdot({\boldsymbol \lambda}+D)(a_{s,s+1}^*)  \\    
 =& -\delta_{s,n} \sum_{\mathbf q; n=q_i} 
   	\prod_{j=1}^{i -1}a_{q_{j}q{_{j +1} }}^*a_{n,n+1}^*\kappa\cdot (  {\boldsymbol \lambda} +D)(a_{n,n+1}^*)
	 -\delta_{s,n} \sum_{\mathbf q; n=q_i} 
   	\prod_{j=1}^{i -1}a_{q_{j}q{_{j +1} }}^*a_{n,n+1}^*\kappa\cdot D(a_{n,n+1}^*)    \\
&\ 
	 +\delta_{s,n}\sum_{ \mathbf q; n=q_i} a_{1,q_2}^*\cdots a_{q_{i-1},n}^*  
	a_{n,n+1}^*\kappa\cdot( {\boldsymbol \lambda} +D)(a_{n,n+1}^*) \\
&\     +\delta_{s,n} 
        \sum_{ \stack{\mathbf q}{n+1 \geq  q_i }}
    a_{1,q_2}^*\cdots a_{q_{i},n+1}^*  \kappa\cdot({\boldsymbol \lambda}+D)(a_{n,n+1}^*)  \\    \\
&= 	 -\delta_{s,n} \sum_{\mathbf q; n=q_i} 
   	\prod_{j=1}^{i -1}a_{q_{j}q{_{j +1} }}^*a_{n,n+1}^*\kappa\cdot D(a_{n,n+1}^*)          +\delta_{s,n} 
        \sum_{ \stack{ \mathbf q}{n\geq  q_i }} 
    a_{1,q_2}^*\cdots a_{q_{i},n+1}^*  \kappa\cdot({\boldsymbol \lambda}+D)(a_{n,n+1}^*) .
\end{align*} 
Hence the summands with $({\boldsymbol \lambda}+D)a_{s,s+1}^*$ and $ ( {\boldsymbol \lambda} +D)a_{s,n+1}^*$, sum to zero.

 Next we have the summands containing $\kappa \cdot Da_{r,n+1}^* $, which contribute
 \begin{align*}
 & -\sum_{s+2\leq r <n+1}\sum_{\mathbf q; r=q_i\atop \exists t:(q_t,q_{t+1})=(s,s+1)} 
   	 a_{1,q_2}^*\cdots a_{q_{i-2},s}^*a_{s,s+1}^*\cdots a_{q_{i-1}r}^*
	 a_{s,s+1}^* \kappa \cdot Da_{r,n+1}^*   \\ 
& \quad +\sum_{s+2\leq r <n+1} \sum_{k=s+2}^{r} \sum_{l=1}^{s-1}
	\sum_{\mathbf q; r=q_i\atop \exists t:(q_t,q_{t+1})=(s+1,k)} a_{1,q_2}^*\cdots a_{l,s+1}^*
	a_{k,q_{t+2}}^*\cdots a_{q_{i-1},r}^* a_{sk}^* \kappa \cdot Da_{r,n+1}^*   \\  
& \quad +\sum_{s+2\leq r <n+1} \sum_{k=s+2}^{r} 
	\sum_{\mathbf q; r=q_i\atop \exists t:(q_t,q_{t+1})=(s+1,k)} a_{1,q_2}^*\cdots a_{q_{t-2},s}^*
	a_{s,s+1}^*
	a_{k,q_{t+2}}^*\cdots a_{q_{i-1},r}^* a_{sk}^* \kappa \cdot Da_{r,n+1}^*   \\  
& \quad-
      \sum_{s+2\leq r \leq n+1}\sum_{k=1}^{s-1}\sum_{ \mathbf q; r=q_i,\atop \exists t\leq i-1\,:\,
       (q_t,q_{t+1})=(k,s)} a_{1,q_2}^*\cdots a_{q_{t-1},k}^*
	a_{s,q_{t+2}}^*  \cdots a_{q_{i-1},r}^* a^*_{k,s+1} \kappa \cdot Da_{r,n+1}^* \\
&\quad- \sum_{s+2\leq r <n+1} \sum_{k=s+2}^{r}
	\sum_{\stack{\mathbf q\ ; r=q_i,}{ \exists t\leq i-1, (q_t,q_{t+1})=(s,k)}} 
	a_{1,q_2}^*\cdots a_{q_{t-1},s}^*a_{s,k}^*
	a_{k,q_{t+2}}^*  \cdots a_{q_{i-1},r}^* 
   	 a_{s,s+1}^* \kappa \cdot Da_{r,n+1}^*  \\ \\
&\quad+  \sum_{s+2\leq r <n+1}
	 \sum_{k=s+2}^{r}a_{1q_2}^*\cdots 
	a_{s+1,k}^*a_{k,q{_{t+2} }}^*\cdots 
	a_{q_{i-1},r}^*a_{s,s+1}^* \kappa \cdot Da_{r,n+1}^*  \\ \\
&= -\sum_{s+2\leq r <n+1}\sum_{\mathbf q; r=q_i\atop \exists t:(q_t,q_{t+1})=(s,s+1)} 
   	 a_{1,q_2}^*\cdots a_{q_{i-2},s}^*a_{s,s+1}^*\cdots a_{q_{i-1}r}^*
	 a_{s,s+1}^* \kappa \cdot Da_{r,n+1}^*   \\ 
& \quad +\sum_{s+2\leq r <n+1} \sum_{k=s+2}^{r} \sum_{l=1}^{s-1}
	\sum_{\mathbf q; r=q_i\atop \exists t:(q_t,q_{t+1})=(s+1,k)} a_{1,q_2}^*\cdots a_{l,s+1}^*
	a_{k,q_{t+2}}^*\cdots a_{q_{i-1},r}^* a_{sk}^* \kappa \cdot Da_{r,n+1}^*   \\  
& \quad-
      \sum_{s+2\leq r \leq n+1}\sum_{k=1}^{s-1}\sum_{ \mathbf q; r=q_i,\atop \exists t\leq i-1\,:\,
       (q_t,q_{t+1})=(k,s)} a_{1,q_2}^*\cdots a_{q_{t-1},k}^*
	a_{s,q_{t+2}}^*  \cdots a_{q_{i-1},r}^* a^*_{k,s+1} \kappa \cdot Da_{r,n+1}^* \\
&\quad+  \sum_{s+2\leq r <n+1}
	 \sum_{k=s+2}^{r}a_{1q_2}^*\cdots 
	a_{s+1,k}^*a_{k,q{_{t+2} }}^*\cdots 
	a_{q_{i-1},r}^*a_{s,s+1}^* \kappa \cdot Da_{r,n+1}^*  
\end{align*}
\begin{align*}
&= -\sum_{s+2\leq r <n+1}\sum_{\mathbf q; r=q_i\atop \exists t:(q_t,q_{t+1})=(s,s+1)} 
   	 a_{1,q_2}^*\cdots a_{q_{i-2},s}^*a_{s,s+1}^*\cdots a_{q_{i-1}r}^*
	 a_{s,s+1}^* \kappa \cdot Da_{r,n+1}^*   \\ 
& \quad +\sum_{s+2\leq r <n+1} \sum_{k=s+2}^{r} \sum_{l=1}^{s-1}
	\sum_{\mathbf q; r=q_i\atop \exists t:(q_t,q_{t+1})=(s+1,k)} a_{1,q_2}^*\cdots a_{l,s+1}^*
	a_{k,q_{t+2}}^*\cdots a_{q_{i-1},r}^* a_{sk}^* \kappa \cdot Da_{r,n+1}^*   \\  
& \quad-
      \sum_{s+2\leq r \leq n+1}\sum_{k=1}^{s-1}\sum_{ \mathbf q; r=q_i,\atop \exists t\leq i-1\,:\,
       (q_t,q_{t+1})=(k,s)} a_{1,q_2}^*\cdots a_{q_{t-1},k}^*
	a_{s,q_{t+2}}^*  \cdots a_{q_{i-1},r}^* a^*_{k,s+1} \kappa \cdot Da_{r,n+1}^* \\
&\quad+  \sum_{s+2\leq r <n+1}
	\sum_{l=1}^{s-1}a_{1q_2}^*\cdots a_{l,s+1}^*\cdots 
	a_{q_{i-1},r}^*a_{s,s+1}^* \kappa \cdot Da_{r,n+1}^*  \\ 
&\quad+  \sum_{s+2\leq r <n+1}
	\sum_{\mathbf q; r=q_i\atop \exists t:(q_t,q_{t+1})=(s,s+1)} a_{1q_2}^*\cdots a_{s,s+1}^*
	\cdots 
	a_{q_{i-1},r}^*a_{s,s+1}^* \kappa \cdot Da_{r,n+1}^*  \\ \\
&= \sum_{s+2\leq r <n+1} \sum_{k=s+2}^{r} \sum_{l=1}^{s-1}
	\sum_{\mathbf q; r=q_i\atop \exists t:(q_t,q_{t+1})=(s+1,k)} a_{1,q_2}^*\cdots a_{l,s+1}^*
	a_{sk}^*a_{k,q_{t+2}}^*\cdots a_{q_{i-1},r}^*  \kappa \cdot Da_{r,n+1}^*   \\  
& \quad-
      \sum_{s+2\leq r \leq n+1}\sum_{k=1}^{s-1}\sum_{ \mathbf q; r=q_i,\atop \exists t\leq i-1\,:\,
       (q_t,q_{t+1})=(k,s)} a_{1,q_2}^*\cdots a_{q_{t-1},k}^*a^*_{k,s+1} 
	a_{s,q_{t+2}}^*  \cdots a_{q_{i-1},r}^* \kappa \cdot Da_{r,n+1}^* \\
&\quad+  \sum_{s+2\leq r <n+1}
	\sum_{l=1}^{s-1}a_{1q_2}^*\cdots a_{l,s+1}^*\cdots 
	a_{q_{i-1},r}^*a_{s,s+1}^* \kappa \cdot Da_{r, n+1}^*  \\  \\
&= \sum_{s+2\leq r <n+1} \sum_{k=s+2}^{r} \sum_{l=1}^{s-1}
	\sum_{\mathbf q; r=q_i\atop \exists t:(q_t,q_{t+1})=(s+1,k)} a_{1,q_2}^*\cdots a_{l,s+1}^*
	a_{sk}^*a_{k,q_{t+2}}^*\cdots a_{q_{i-1},r}^*  \kappa \cdot Da_{r,n+1}^*   \\  
& \quad-
      \sum_{s+2\leq r \leq n+1}\sum_{k=1}^{s-1}\sum_{l=s+2}^{r}\sum_{ \mathbf q; r=q_i,\atop \exists t\leq i-1\,:\,
       (q_t,q_{t+1})=(k,s)} a_{1,q_2}^*\cdots a_{q_{t-1},k}^*a^*_{k,s+1} 
	a_{s,l}^*a_{l,q_{t+3}}^*  \cdots a_{q_{i-1},r}^* \kappa \cdot Da_{r,n+1}^* \\
& \quad-
      \sum_{s+2\leq r \leq n+1}\sum_{k=1}^{s-1}\sum_{ \mathbf q; r=q_i,\atop \exists t\leq i-1\,:\,
       (q_t,q_{t+1})=(k,s)} a_{1,q_2}^*\cdots a_{q_{t-1},k}^*a^*_{k,s+1} 
	a_{s+1,q_{t+3}}^*  \cdots a_{q_{i-1},r}^* a_{s,s+1}^*\kappa \cdot Da_{r,n+1}^* \\
&\quad+  \sum_{s+2\leq r <n+1}
	\sum_{l=1}^{s-1}a_{1q_2}^*\cdots a_{l,s+1}^*\cdots 
	a_{q_{i-1},r}^*a_{s,s+1}^* \kappa \cdot Da_{r,n+1}^*  \\ 
&=0.
 \end{align*}
Now for the terms without any $\kappa\cdot D$ or $b_i$'s in them.
 First we consider the summands with $a_{k,j}$ terms with $k\leq s-1$:
\begin{align*}
&\quad \sum_{k=1}^{s-1}a_{ks}  \sum_{\mathbf q;s+1= q_i;k\geq q_{i-1}} 		
	 a_{1,q_2}^*\cdots a_{q_{i-1},s+1}^*   a_{k,n+1}^*\\ 
&\quad  -\sum_{k=1}^{s-1}\sum_{1\leq r<j  \leq n+1} a_{rj}  
	\sum_{\mathbf q;j= q_i;r\geq q_{i-1}\atop
	\exists t\leq i-1:(q_t,q_{t+1})=(k,s)}a_{1,q_2}^*\cdots a_{q_{t-1},k}^*a_{s,q_{t+2}}^*\cdots 
	a_{q_{i-1},j}^*   a_{k,s+1}^*a_{r,n+1}^* \\ 
&=\quad  \sum_{k=1}^{s-1}a_{ks}  \sum_{\mathbf q;s+1= q_i;k\geq q_{i-1}} 		
	 a_{1,q_2}^*\cdots a_{q_{i-1},s+1}^*   a_{k,n+1}^*\\ 
&\quad  -\sum_{k=1}^{s-1}\sum_{1\leq r\leq s-1} a_{rs}  
	\sum_{\mathbf q;s= q_i;r\geq q_{i-1}\atop
	\exists t\leq i-1:(q_t,q_{t+1})=(k,s)}a_{1,q_2}^*\cdots a_{q_{t-1},k}^*   a_{k,s+1}^*a_{r,n+1}^* \\ 
&\quad  -\sum_{k=1}^{s-1}\sum_{s\leq r<j  \leq n+1;j\geq s+1} a_{rj}  
	\sum_{\mathbf q;j= q_i;r\geq q_{i-1}\atop
	\exists t\leq i-1:(q_t,q_{t+1})=(k,s)}a_{1,q_2}^*\cdots a_{q_{t-1},k}^*a_{s,q_{t+2}}^*\cdots 
	a_{q_{i-1},j}^*   a_{k,s+1}^*a_{r,n+1}^* \\ 
&= -\sum_{k=1}^{s-1}\sum_{s\leq r<j  \leq n+1 } a_{rj}  
	\sum_{\mathbf q;j= q_i;r\geq q_{i-1}\atop
	\exists t\leq i-1:(q_t,q_{t+1})=(k,s)}a_{1,q_2}^*\cdots a_{q_{t-1},k}^*a_{s,q_{t+2}}^*\cdots 
	a_{q_{i-1},j}^*   a_{k,s+1}^*a_{r,n+1}^* \\ 
\end{align*}
This last summation splits up into three summations which contribute to later summations:  The first summation has a factor of $a_{sj}$, the second a factor of $a_{s+1,j}$ and the last summation has factors of the form $a_{rj}$ with $s+1<r<j$.   We deal with these consecutively in the following:

We consider the summands with a factor of $a_{sj}$:
\begin{align*}
&\quad -\delta_{s,n}a_{n,n+1}(a_{n,n+1}^*)^2   \sum_{\mathbf q}  
	a_{1,q_2}^*\cdots a_{q_{i-1},n+1}^*       \\  
&\quad -\sum_{s<j  \leq n+1} a_{sj}  
	\sum_{\mathbf q;j= q_i;s\geq q_{i-1}\atop \exists t: (q_t,q_{t+1})=(s,s+1)} 
	a_{1,q_2}\cdots a_{q_{t-1},s}^* a_{s,s+1}^*\cdots a_{q_{i-1},j}^*  a_{s,s+1}^*
    	a_{s,n+1}^*  \\ 
&\quad +2a_{s,s+1}a_{s,s+1}^*  \sum_{\mathbf q;s+1= q_i} a_{1,q_2}^*\cdots a_{q_{i-1},s+1}^*  
	a_{s,n+1}^*  \\
&\quad-\sum_{k=1}^{s-1}\sum_{s<j  \leq n+1 } a_{sj}  
	\sum_{\mathbf q;j= q_i;s\geq q_{i-1}\atop
	\exists t\leq i-1:(q_t,q_{t+1})=(k,s)}a_{1,q_2}^*\cdots a_{q_{t-1},k}^*a_{s,q_{t+2}}^*\cdots 
	a_{q_{i-1},j}^*   a_{k,s+1}^*a_{s,n+1}^*  \\ 
& \quad +\sum_{j=s+2}^{n+1}  a_{sj}    a_{s,s+1}^* 
        \sum_{\mathbf q;j= q_i;s\geq q_{i-1}} a_{1,q_2}^*\cdots 
	a_{q_{i-1},j}^*  a_{s,n+1}^*  \\ 
& \quad +\sum_{k=s+2}^{n+1}  
    a_{sk}a_{sk}^*
         \sum_{\mathbf q;s+1= q_i }a_{1,q_2}^*\cdots 
	a_{q_{i-1},s+1}^*a_{s,n+1}^*   \\
&\quad -\sum_{k=s+2}^{n+1} \sum_{s<j  \leq n+1} a_{sj} 
         \sum_{\mathbf q;j= q_i;s\geq q_{i-1}\atop \exists t;\enspace (q_t,q_{t+1})=(s,k)}  
         a_{1,q_2}^*\cdots a_{q_{t-1},s}^*a_{s,k}^*\cdots a_{q_{i-1},j}^*   a_{s,s+1}^* a_{s,n+1}^*  \\ 
&\quad  -   (1-\delta_{s,n}) \sum_{ s<j  \leq n+1} a_{sj}  
        \sum_{\mathbf q;j= q_i;s\geq q_{i-1}} a_{1,q_2}^*\cdots 
	a_{q_{i-1},j}^*a_{s,n+1}^*a_{s,s+1}^*  \\   \\
&= -a_{s,s+1}  
	\sum_{\mathbf q;s+1= q_i} 
	a_{1,q_2}\cdots a_{q_{i-2},s}^* a_{s,s+1}^*  a_{s,s+1}^*
    	a_{s,n+1}^*  \\ 
&\quad +a_{s,s+1}a_{s,s+1}^*  \sum_{\mathbf q;s+1= q_i} a_{1,q_2}^*\cdots a_{q_{i-1},s+1}^*  
	a_{s,n+1}^*  \\
&\quad-\sum_{k=1}^{s-1}\sum_{s<j  \leq n+1 } a_{sj}  
	\sum_{\mathbf q;j= q_i}a_{1,q_2}^*\cdots a_{q_{t-1},k}^*a_{s,j}^*   a_{k,s+1}^*a_{s,n+1}^*  \\ 
& \quad +\sum_{k=s+2}^{n+1}  
    a_{sk}a_{sk}^*
         \sum_{\mathbf q;s+1= q_i }a_{1,q_2}^*\cdots 
	a_{q_{i-1},s+1}^*a_{s,n+1}^*   \\
&\quad -\sum_{k=s+2}^{n+1} \sum_{s<j  \leq n+1} a_{sj} 
         \sum_{\mathbf q;j= q_i;s\geq q_{i-1}\atop \exists t;\enspace (q_t,q_{t+1})=(s,k)}  
         a_{1,q_2}^*\cdots a_{q_{t-1},s}^*a_{s,k}^*\cdots a_{q_{i-1},j}^*   a_{s,s+1}^* a_{s,n+1}^*  \\  \\
&=a_{s,s+1}a_{s,s+1}^*  \sum_{\mathbf q; q_{i}\leq s-1} a_{1,q_2}^*\cdots a_{q_{i-1},s+1}^*  
	a_{s,n+1}^* -\sum_{k=1}^{s-1}\sum_{s<j  \leq n+1 } a_{sj}  
	\sum_{\mathbf q;j= q_i}a_{1,q_2}^*\cdots a_{q_{t-1},k}^*a_{s,j}^*   a_{k,s+1}^*a_{s,n+1}^*  \\ 
& \quad +\sum_{k=s+2}^{n+1}  
    a_{sk}a_{sk}^*
         \sum_{\mathbf q;s+1= q_i }a_{1,q_2}^*\cdots 
	a_{q_{i-1},s+1}^*a_{s,n+1}^*    -\sum_{k=s+2}^{n+1}  a_{sk} 
         \sum_{\mathbf q;k= q_i}  
         a_{1,q_2}^*\cdots a_{q_{i-2},s}^*a_{s,k}^*    a_{s,s+1}^* a_{s,n+1}^*   
\end{align*}

\begin{align*}
=  & -\sum_{j=s+2 } ^{  n+1}\sum_{k=1}^{s-1}a_{sj}  a_{s,j}^* 
	\sum_{\mathbf q;j= q_i}a_{1,q_2}^*\cdots a_{q_{t-1},k}^*  a_{k,s+1}^*a_{s,n+1}^*   +\sum_{k=s+2}^{n+1}  
    a_{sk}a_{sk}^*
         \sum_{\mathbf q;s+1= q_i }a_{1,q_2}^*\cdots 
	a_{q_{i-1},s+1}^*a_{s,n+1}^*   \\
&\quad -\sum_{k=s+2}^{n+1}  a_{sk} a_{sk}^*
         \sum_{\mathbf q;k= q_i}  
         a_{1,q_2}^*\cdots a_{q_{i-2},s}^*    a_{s,s+1}^* a_{s,n+1}^*  \\ 
&= 0. 
\end{align*}
Next we calculate the terms with $a_{s+1,j}$ in them:
\begin{align*}
-&\sum_{s+1<j  \leq n+1} a_{s+1,j}  
	\sum_{\stack{\mathbf q;j= q_i;s= q_{i-1}}{ \exists t: (q_t,q_{t+1})=(s,s+1)} }
	a_{1,q_2}^*\cdots a_{q_{i-3},s}^* a_{s,s+1}^* a_{s+1,j}^*  a_{s,s+1}^*
    	a_{s+1,n+1}^*  \\ 
  & -\sum_{k=s+2}^{n+1}a_{s+1,k}\sum_{\stack{\mathbf q: k=q_i}{ s\geq q_{i-1}}} a_{1,q_2}^*\cdots 		a_{q_{i-1},k}^*   a_{s,n+1}^*\\ 
  & +\sum_{k=s+2}^{n+1} a_{s+1,k}
	\sum_{\stack{\mathbf q: k=q_i;s+1=q_{i-1}}{\exists t:(q_t,q_{t+1})=(s+1,k)}}
	a_{1,q_2}^*\cdots a_{q_{t-1},s+1}^*   
	a_{s,k}^* a_{s+1,n+1}^*\\ 
 & + \sum_{ s+1<j\leq n+1}a_{s+1,j}\sum_{\mathbf q: j=q_i\atop s+1\geq q_{i-1}}
	a_{1,q_2}^*\cdots a_{q_{i-1},j}^*  a_{s,n+1}^*  \\
  & -\sum_{k=1}^{s-1}\sum_{s+1<j  \leq n+1} a_{s+1,j}  
	\sum_{\mathbf q;j= q_i;s+1\geq q_{i-1}\atop
	\exists t\leq i-1:(q_t,q_{t+1})=(k,s)}a_{1,q_2}^*\cdots a_{q_{t-1},k}^*a_{s,q_{t+2}}^*\cdots 
	a_{q_{i-1},j}^*   a_{k,s+1}^*a_{s+1,n+1}^* \\ 
  &-\sum_{j=s+2}^{n+1}   a_{s+1,j}    a_{s,s+1}^* 
        \sum_{\mathbf q;j= q_i;s+1\geq q_{i-1}}a_{1,q_2}^*\cdots 
	a_{q_{i-1},j}^* a_{s+1,n+1}^*  \\ 
 & -\sum_{k=s+2}^{n+1}          a_{s+1,k}a_{s+1,k}^* 
         \sum_{\mathbf q;s+1= q_i }a_{1,q_2}^*\cdots 
	a_{q_{i-1},s+1}^*a_{s,n+1}^*   \\
& -\sum_{k=s+2}^{n+1} \sum_{s+1<j  \leq n+1} a_{s+1,j} 
         \sum_{\mathbf q;j= q_i;s+1\geq q_{i-1}\atop \exists t;\enspace (q_t,q_{t+1})=(s,k)}  
         a_{1,q_2}^*\cdots a_{q_{t-1},s}^*a_{s,k}^*\cdots a_{q_{i-1},j}^*   a_{s,s+1}^* a_{s+1,n+1}^*  \\ 
  & +\sum_{k=s+2}^{n+1} a_{s+1,k} 
         \sum_{\mathbf q;k= q_i;s+1\geq q_{i-1}\atop \exists t;\enspace (q_t,q_{t+1})=(s+1,k)} 
           a_{1,q_2}^*\cdots a_{q_{t-1},s+1}^*a_{s+1,k}^*  
        a_{s,s+1}^* a_{s+1,n+1}^*  \\ 
  &+   \sum_{s+1<j  \leq n+1} a_{s+1,j}  
        \sum_{\mathbf q;j= q_i;s+1\geq q_{i-1}}a_{1,q_2}^*\cdots 
	a_{q_{i-1},j}^* a_{s+1,n+1}^* a_{s,s+1}^*\\
\end{align*}
\begin{align*}
 =& -\sum_{k=s+2}^{n+1} a_{s+1,k}  
	\sum_{\mathbf q;k= q_i;s= q_{i-1}\atop \exists t: (q_t,q_{t+1})=(s,s+1)} 
	a_{1,q_2}^*\cdots a_{q_{i-3},s}^* a_{s,s+1}^* a_{s+1,k}^*  a_{s,s+1}^*
    	a_{s+1,n+1}^*  \\ 
  &+\sum_{k=s+2}^{n+1} a_{s+1,k} 
         \sum_{\mathbf q;k= q_i;s+1\geq q_{i-1}\atop \exists t;\enspace (q_t,q_{t+1})=(s+1,k)} 
           a_{1,q_2}^*\cdots a_{q_{t-1},s+1}^*a_{s+1,k}^*  
        a_{s,s+1}^* a_{s+1,n+1}^*  \\ 
   &-\sum_{k=s+2}^{n+1}a_{s+1,k}\sum_{\mathbf q: k=q_i\atop s\geq q_{i-1}}a_{1,q_2}^*\cdots 		a_{q_{i-1},k}^*   a_{s,n+1}^*\\ 
  &+ \sum_{k=s+2}^{n+1}a_{s+1,k}\sum_{\mathbf q: k=q_i\atop s+1\geq q_{i-1}}
	a_{1,q_2}^*\cdots a_{q_{i-1},k}^*  a_{s,n+1}^*  \\
   &+\sum_{k=s+2}^{n+1} a_{s+1,k}
	\sum_{\stack{\mathbf q: k=q_i;s+1=q_{i-1}}{\exists t:(q_t,q_{t+1})=(s+1,k)}}
	a_{1,q_2}^*\cdots a_{q_{t-1},s+1}^*   
	a_{s,k}^* a_{s+1,n+1}^*\\ 
  &-\sum_{k=s+2}^{n+1}          a_{s+1,k}a_{s+1,k}^* 
         \sum_{\mathbf q;s+1= q_i }a_{1,q_2}^*\cdots 
	a_{q_{i-1},s+1}^*a_{s,n+1}^*   \\
  &-\sum_{k=1}^{s-1}\sum_{s+1<j  \leq n+1} a_{s+1,j}  
	\sum_{\mathbf q;j= q_i;s+1\geq q_{i-1}\atop
	\exists t\leq i-1:(q_t,q_{t+1})=(k,s)}a_{1,q_2}^*\cdots a_{q_{t-1},k}^*a_{s,q_{t+2}}^*\cdots 
	a_{q_{i-1},j}^*   a_{k,s+1}^*a_{s+1,n+1}^* \\ 
  &-\sum_{k=s+2}^{n+1} \sum_{j=s+2}^{n+1}  a_{s+1,k} 
         \sum_{\mathbf q;k= q_i;s+1\geq q_{i-1}\atop \exists t;\enspace (q_t,q_{t+1})=(s,j)}  
         a_{1,q_2}^*\cdots a_{q_{t-1},s}^*a_{s,j}^*\cdots a_{q_{i-1},k}^*   a_{s,s+1}^* a_{s+1,n+1}^*   \\
         \end{align*}
         \begin{align*}
 = & \sum_{k=s+2}^{n+1} \sum_{l=1}^{s-1}a_{s+1,k} 
         \sum_{\stack{ \mathbf q;k= q_i}{s+1\geq q_{i-1}} }
           a_{1,q_2}^*\cdots a_{l,s+1}^*a_{s+1,k}^*  
        a_{s,s+1}^* a_{s+1,n+1}^*  \\ 
  &+ \sum_{k=s+2}^{n+1}a_{s+1,k}\sum_{\mathbf q: k=q_i }
	a_{1,q_2}^*\cdots a_{q_{i-2},s+1}^*  a_{s+1,k}^*a_{s,n+1}^*  \\
  &+\sum_{k=s+2}^{n+1} a_{s+1,k}
	\sum_{\stack{\mathbf q: k=q_i;s+1=q_{i-1}}{\exists t:(q_t,q_{t+1})=(s+1,k)}}
	a_{1,q_2}^*\cdots a_{q_{t-1},s+1}^*   
	a_{s,k}^* a_{s+1,n+1}^*\\ 
 & -\sum_{k=s+2}^{n+1}          a_{s+1,k}a_{s+1,k}^* 
         \sum_{\mathbf q;s+1= q_i }a_{1,q_2}^*\cdots 
	a_{q_{i-1},s+1}^*a_{s,n+1}^*   \\
  &- \sum_{k=s+2}^{n+1}  \sum_{j=1}^{s-1}a_{s+1,k}  
	\sum_{\mathbf q;k= q_i;s+1\geq q_{i-1}\atop
	\exists t\leq i-1:(q_t,q_{t+1})=(j,s)}a_{1,q_2}^*\cdots a_{q_{t-1},j}^*a_{s,q_{t+2}}^*\cdots 
	a_{q_{i-1},k}^*   a_{j,s+1}^*a_{s+1,n+1}^* \\ 
  &-\sum_{k=s+2}^{n+1} \sum_{j=s+2}^{n+1}  a_{s+1,k} 
         \sum_{\mathbf q;k= q_i;s+1\geq q_{i-1}\atop \exists t;\enspace (q_t,q_{t+1})=(s,j)}  
         a_{1,q_2}^*\cdots a_{q_{t-1},s}^*a_{s,j}^*\cdots a_{q_{i-1},k}^*   a_{s,s+1}^* a_{s+1,n+1}^*  \\
        \end{align*}
\begin{align*}
 =  &\sum_{k=s+2}^{n+1} \sum_{l=1}^{s-1}a_{s+1,k} 
         \sum_{\mathbf q;k= q_i;s+1\geq q_{i-1}} 
           a_{1,q_2}^*\cdots a_{l,s+1}^*a_{s+1,k}^*  
        a_{s,s+1}^* a_{s+1,n+1}^*  \\ 
  &+\sum_{k=s+2}^{n+1} a_{s+1,k}a_{s,k}^*
	\sum_{\mathbf q: k=q_i;s+1=q_{i-1},\exists t:(q_t,q_{t+1})=(s+1,k)}
	a_{1,q_2}^*\cdots a_{q_{t-1},s+1}^*   
	 a_{s+1,n+1}^*\\ 
  &-\sum_{k=s+2}^{n+1}   a_{s+1,k}    
        \sum_{\mathbf q;k= q_i;s+1\geq q_{i-1}}a_{1,q_2}^*\cdots 
	a_{q_{i-1},k}^* a_{s,s+1}^* a_{s+1,n+1}^*  \\ 
  &- \sum_{k=s+2}^{n+1}  \sum_{j=1}^{s-1}a_{s+1,k}  
	\sum_{\mathbf q;k= q_i;s+1\geq q_{i-1}\atop
	\exists t\leq i-1:(q_t,q_{t+1})=(j,s)}a_{1,q_2}^*\cdots a_{q_{t-1},j}^*a_{s,q_{t+2}}^*\cdots 
	a_{q_{i-1},k}^*   a_{j,s+1}^*a_{s+1,n+1}^* \\ 
 & -\sum_{k=s+2}^{n+1} \sum_{j=s+2}^{n+1}  a_{s+1,k} 
         \sum_{\mathbf q;k= q_i;s+1\geq q_{i-1}\atop \exists t;\enspace (q_t,q_{t+1})=(s,j)}  
         a_{1,q_2}^*\cdots a_{q_{t-1},s}^*a_{s,j}^*\cdots a_{q_{i-1},k}^*   a_{s,s+1}^* a_{s+1,n+1}^*  \\ 
   &+   \sum_{k=s+2}^{n+1}  a_{s+1,k}  
        \sum_{\mathbf q;k= q_i;s+1\geq q_{i-1}}a_{1,q_2}^*\cdots 
	a_{q_{i-1},k}^* a_{s+1,n+1}^* a_{s,s+1}^*  
	\end{align*}
	\begin{align*}
 = & \sum_{k=s+2}^{n+1} \sum_{l=1}^{s-1}a_{s+1,k} 
         \sum_{\mathbf q;k= q_i;s+1\geq q_{i-1}} 
           a_{1,q_2}^*\cdots a_{l,s+1}^*a_{s+1,k}^*  
        a_{s,s+1}^* a_{s+1,n+1}^*  \\ 
   &+\sum_{k=s+2}^{n+1} a_{s+1,k}a_{s,k}^*
	\sum_{\stack{\mathbf q: k=q_i;s+1=q_{i-1}}{\exists t:(q_t,q_{t+1})=(s+1,k)}}
	a_{1,q_2}^*\cdots a_{q_{t-1},s+1}^*   
	 a_{s+1,n+1}^*\\ 
   &- \sum_{k=s+2}^{n+1}  \sum_{j=1}^{s-1}a_{s+1,k}  
	\sum_{\mathbf q;k= q_i;s+1\geq q_{i-1}\atop
	\exists t\leq i-1:(q_t,q_{t+1})=(j,s)}a_{1,q_2}^*\cdots a_{q_{t-1},j}^*a_{s,q_{t+2}}^*\cdots 
	a_{q_{i-1},k}^*   a_{j,s+1}^*a_{s+1,n+1}^* \\ 
  &-\sum_{k=s+2}^{n+1} \sum_{j=s+2}^{n+1}  a_{s+1,k} 
         \sum_{\mathbf q;k= q_i;s+1\geq q_{i-1}\atop \exists t;\enspace (q_t,q_{t+1})=(s,j)}  
         a_{1,q_2}^*\cdots a_{q_{t-1},s}^*a_{s,j}^*\cdots a_{q_{i-1},k}^*   a_{s,s+1}^* a_{s+1,n+1}^*  \\
\end{align*}
\begin{align*}
 =  &\sum_{k=s+2}^{n+1} \sum_{l=1}^{s-1}a_{s+1,k} 
         \sum_{\stack{\mathbf q;k= q_i}{s+1\geq q_{i-1}} }
           a_{1,q_2}^*\cdots a_{l,s+1}^*a_{s+1,k}^*  \\
   &+\sum_{k=s+2}^{n+1} a_{s+1,k}a_{s,k}^*
	\sum_{\stack{\mathbf q: k=q_i;s+1=q_{i-1}}{\exists t:(q_t,q_{t+1})=(s+1,k)}}
	a_{1,q_2}^*\cdots a_{q_{t-1},s+1}^*   
	 a_{s+1,n+1}^*\\ 
  &- \sum_{k=s+2}^{n+1}  \sum_{j=1}^{s-1}a_{s+1,k}  
	\sum_{\mathbf q;k= q_i;s+1\geq q_{i-1}\atop
	\exists t\leq i-1:(q_t,q_{t+1})=(j,s)}a_{1,q_2}^*\cdots a_{q_{t-1},j}^*a_{s,q_{t+2}}^*\cdots 
	a_{q_{i-1},k}^*   a_{j,s+1}^*a_{s+1,n+1}^* \\ 
&  -\sum_{k=s+2}^{n+1}   a_{s+1,k} a_{s,k}^* 
         \sum_{\mathbf q;k= q_i;s+1\geq q_{i-1}\atop \exists t;\enspace (q_t,q_{t+1})=(s,j)}  
         a_{1,q_2}^*\cdots a_{q_{t-1},s}^*   a_{s,s+1}^* a_{s+1,n+1}^*  \\
%
 =  &\sum_{k=s+2}^{n+1} \sum_{l=1}^{s-1}a_{s+1,k} 
         \sum_{\stack{\mathbf q;k= q_i}{s+1\geq q_{i-1}} }
           a_{1,q_2}^*\cdots a_{l,s+1}^*a_{s+1,k}^*  
        a_{s,s+1}^* a_{s+1,n+1}^*  \\ 
  & +\sum_{k=s+2}^{n+1} \sum_{l=1}^{s-1}a_{s+1,k}a_{s,k}^*
	\sum_{\mathbf q: k=q_i;s+1=q_{i-1},\exists t:(q_t,q_{t+1})=(s+1,k)}
	a_{1,q_2}^*\cdots a_{l,s+1}^* a_{s+1,n+1}^*\\ 
 &  - \sum_{k=s+2}^{n+1}  \sum_{j=1}^{s-1}a_{s+1,k}  a_{s,k}^*
	\sum_{\mathbf q;k= q_i;s= q_{i-1}\atop
	\exists t\leq i-1:(q_t,q_{t+1})=(j,s)}a_{1,q_2}^*\cdots a_{q_{t-1},j}^*   a_{j,s+1}^*a_{s+1,n+1}^* \\ 
  & - \sum_{k=s+2}^{n+1}  \sum_{j=1}^{s-1}a_{s+1,k}  
	\sum_{\mathbf q;k= q_i;s+1=q_{i-1}}a_{1,q_2}^*\cdots a_{q_{t-1},j}^*a_{s,s+1}^* 
	a_{s+1,k}^*   a_{j,s+1}^*a_{s+1,n+1}^* \\ 
 =& 0.
\end{align*}
Finally we calculate the last summations with $a_{r,j}$ $r\geq s+2$:
\begin{align*}
&-\sum_{s+2\leq r<j  \leq n+1} a_{rj}  
	\sum_{\mathbf q;j= q_i;r\geq q_{i-1}\atop \exists t: (q_t,q_{t+1})=(s,s+1)} 
	a_{1,q_2}^*\cdots a_{q_{t-1},s}^* a_{s,s+1}^*\cdots a_{q_{i-1},j}^*  a_{s,s+1}^*
    	a_{r,n+1}^*  \\ 
&\quad  +\sum_{k=s+2}^{n+1}\sum_{s+2\leq r<j\leq n+1}a_{rj}
	\sum_{\mathbf q: j=q_i\atop r \geq q_{i-1},\exists t:(q_t,q_{t+1})=(s+1,k)}
	a_{1,q_2}^*\cdots a_{q_{t-1},s+1}^*a_{k,q_{t+2}}^*\cdots a_{q_{i-1},j}^*   
	a_{s,k}^* a_{r,n+1}^*\\ 
&\quad  -\sum_{k=1}^{s-1}\sum_{s+2\leq r<j  \leq n+1} a_{rj}  
	\sum_{\mathbf q;j= q_i;r\geq q_{i-1}\atop
	\exists t\leq i-1:(q_t,q_{t+1})=(k,s)}a_{1,q_2}^*\cdots a_{q_{t-1},k}^*a_{s,q_{t+2}}^*\cdots 
	a_{q_{i-1},j}^*   a_{k,s+1}^*a_{r,n+1}^* \\ 
&\quad -\sum_{k=s+2}^{n+1} \sum_{s+2\leq r<j  \leq n+1} a_{rj} 
         \sum_{\mathbf q;j= q_i;r\geq q_{i-1}\atop \exists t;\enspace (q_t,q_{t+1})=(s,k)}  
         a_{1,q_2}^*\cdots a_{q_{t-1},s}^*a_{s,k}^*\cdots a_{q_{i-1},j}^*   a_{s,s+1}^* a_{r,n+1}^*  \\ 
&\quad +\sum_{k=s+2}^{n+1} \sum_{s+2\leq r<j  \leq n+1} a_{rj} 
         \sum_{\mathbf q;j= q_i;r\geq q_{i-1}\atop \exists t;\enspace (q_t,q_{t+1})=(s+1,k)} 
           a_{1,q_2}^*\cdots a_{q_{t-1},s+1}^*a_{s+1,k}^*\cdots a_{q_{i-1},j}^*   
        a_{s,s+1}^* a_{r,n+1}^*  \\
&= \sum_{k=s+2}^{n+1}\sum_{s+2\leq r<j\leq n+1}a_{rj}
	\sum_{\mathbf q: j=q_i\atop r \geq q_{i-1},\exists t:(q_t,q_{t+1})=(s+1,k)}
	a_{1,q_2}^*\cdots a_{q_{t-1},s+1}^*a_{k,q_{t+2}}^*\cdots a_{q_{i-1},j}^*   
	a_{s,k}^* a_{r,n+1}^*\\ 
&\quad  -\sum_{k=1}^{s-1}\sum_{s+2\leq r<j  \leq n+1} a_{rj}  
	\sum_{\mathbf q;j= q_i;r\geq q_{i-1}\atop
	\exists t\leq i-1:(q_t,q_{t+1})=(k,s)}a_{1,q_2}^*\cdots a_{q_{t-1},k}^*a_{s,q_{t+2}}^*\cdots 
	a_{q_{i-1},j}^*   a_{k,s+1}^*a_{r,n+1}^* \\ 
&\quad - \sum_{s+2\leq r<j  \leq n+1} \sum_{k=s+2}^{n+1}a_{rj} 
         \sum_{\mathbf q;j= q_i;r\geq q_{i-1}\atop \exists t;\enspace q_t =s}  
         a_{1,q_2}^*\cdots a_{q_{t-1},s}^*a_{s,k}^*\cdots a_{q_{i-1},j}^*   a_{s,s+1}^* a_{r,n+1}^*  \\ 
&\quad - \sum_{s+2\leq r<j  \leq n+1} a_{rj} 
         \sum_{\mathbf q;j= q_i;r\geq q_{i-1}\atop \exists t;\enspace q_t =s}  
         a_{1,q_2}^*\cdots a_{q_{t-1},s}^*a_{s,s+1}^*\cdots a_{q_{i-1},j}^*   a_{s,s+1}^* a_{r,n+1}^*  \\ 
&\quad +  \sum_{s+2\leq r<j  \leq n+1} \sum_{l=1}^{s-1}a_{rj} 
         \sum_{\mathbf q;j= q_i;r\geq q_{i-1}\atop \exists t;\enspace  q_t =s+1} 
           a_{1,q_2}^*\cdots a_{l,s+1}^*a_{s+1,k}^*\cdots a_{q_{i-1},j}^*   
        a_{s,s+1}^* a_{r,n+1}^*  \\
&\quad +  \sum_{s+2\leq r<j  \leq n+1} a_{rj} 
         \sum_{\mathbf q;j= q_i;r\geq q_{i-1}\atop \exists t;\enspace  q_t =s+1} 
           a_{1,q_2}^*\cdots a_{s,s+1}^*a_{s+1,q_{t+2}}^*\cdots a_{q_{i-1},j}^*   
        a_{s,s+1}^* a_{r,n+1}^*
\end{align*}

\begin{align*}
&= \sum_{k=s+2}^{n+1}\sum_{s+2\leq r<j\leq n+1}a_{rj}
	\sum_{\mathbf q: j=q_i\atop r \geq q_{i-1},\exists t:(q_t,q_{t+1})=(s+1,k)}
	a_{1,q_2}^*\cdots a_{q_{t-1},s+1}^*a_{k,q_{t+2}}^*\cdots a_{q_{i-1},j}^*   
	a_{s,k}^* a_{r,n+1}^*\\ 
&\quad  -\sum_{k=1}^{s-1}\sum_{s+2\leq r<j  \leq n+1} a_{rj}  
	\sum_{\mathbf q;j= q_i;r\geq q_{i-1}\atop
	\exists t\leq i-1:(q_t,q_{t+1})=(k,s)}a_{1,q_2}^*\cdots a_{q_{t-1},k}^*a_{s,q_{t+2}}^*\cdots 
	a_{q_{i-1},j}^*   a_{k,s+1}^*a_{r,n+1}^* \\ 
&\quad - \sum_{s+2\leq r<j  \leq n+1} \sum_{k=s+2}^{n+1}a_{rj} 
         \sum_{\mathbf q;j= q_i;r\geq q_{i-1}\atop \exists t;\enspace q_t =s}  
         a_{1,q_2}^*\cdots a_{q_{t-1},s}^*a_{s,k}^*\cdots a_{q_{i-1},j}^*   a_{s,s+1}^* a_{r,n+1}^*  \\ 
&\quad +  \sum_{s+2\leq r<j  \leq n+1} \sum_{l=1}^{s-1}a_{rj} 
         \sum_{\mathbf q;j= q_i;r\geq q_{i-1}\atop \exists t;\enspace  q_t =s+1} 
           a_{1,q_2}^*\cdots a_{l,s+1}^*a_{s+1,q_{t+2}}^*\cdots a_{q_{i-1},j}^*   
        a_{s,s+1}^* a_{r,n+1}^*  \\
&= \sum_{k=s+2}^{n+1}\sum_{s+2\leq r<j\leq n+1}a_{rj}
	\sum_{\mathbf q: j=q_i\atop r \geq q_{i-1},\exists t:(q_t,q_{t+1})=(s+1,k)}
	a_{1,q_2}^*\cdots a_{q_{t-1},s+1}^*a_{k,q_{t+2}}^*\cdots a_{q_{i-1},j}^*   
	a_{s,k}^* a_{r,n+1}^*\\ 
&\quad  -\sum_{k=1}^{s-1}\sum_{l=s+2}^{n+1}\sum_{s+2\leq r<j  \leq n+1} a_{rj}  
	\sum_{\mathbf q;j= q_i;r\geq q_{i-1}\atop
	\exists t\leq i-1:(q_t,q_{t+1})=(k,s)}a_{1,q_2}^*\cdots a_{q_{t-1},k}^*a_{s,l}^*\cdots 
	a_{q_{i-1},j}^*   a_{k,s+1}^*a_{r,n+1}^* \\ 
&\quad  -\sum_{k=1}^{s-1}\sum_{s+2\leq r<j  \leq n+1} a_{rj}  
	\sum_{\mathbf q;j= q_i;r\geq q_{i-1}\atop
	\exists t\leq i-1:(q_t,q_{t+1})=(k,s)}a_{1,q_2}^*
		\cdots a_{q_{t-1},k}^* a_{k,s+1}^*a_{s+1,q_{t+3}}^*\cdots 
	a_{q_{i-1},j}^*  a_{s,s+1}^*a_{r,n+1}^* \\ 
&\quad - \sum_{s+2\leq r<j  \leq n+1} \sum_{k=s+2}^{n+1}a_{rj} 
         \sum_{\mathbf q;j= q_i;r\geq q_{i-1}\atop \exists t;\enspace q_t =s}  
         a_{1,q_2}^*\cdots a_{q_{t-1},s}^*a_{s,k}^*\cdots a_{q_{i-1},j}^*   a_{s,s+1}^* a_{r,n+1}^*  \\ 
&\quad +  \sum_{s+2\leq r<j  \leq n+1} \sum_{l=1}^{s-1}a_{rj} 
         \sum_{\mathbf q;j= q_i;r\geq q_{i-1}\atop \exists t;\enspace  q_t =s+1} 
           a_{1,q_2}^*\cdots a_{l,s+1}^*a_{s+1,q_{t+2}}^*\cdots a_{q_{i-1},j}^*   
        a_{s,s+1}^* a_{r,n+1}^*   
\end{align*}

\begin{align*}
&= \sum_{k=s+2}^{n+1}\sum_{l=1}^{s-1}\sum_{s+2\leq r<j\leq n+1}a_{rj}
	\sum_{\mathbf q: j=q_i\atop r \geq q_{i-1},\exists t:(q_t,q_{t+1})=(s+1,k)}
	a_{1,q_2}^*\cdots a_{q_{t-2},l}^*a_{l,s+1}^*a_{k,q_{t+2}}^*\cdots a_{q_{i-1},j}^*   
	a_{s,k}^* a_{r,n+1}^*\\ 
&\quad +\sum_{k=s+2}^{n+1}\sum_{s+2\leq r<j\leq n+1}a_{rj}
	\sum_{\mathbf q: j=q_i\atop r \geq q_{i-1},\exists t:(q_t,q_{t+1})=(s+1,k)}
	a_{1,q_2}^*\cdots a_{q_{t-2},s}a_{s,s+1}^*a_{k,q_{t+2}}^*\cdots a_{q_{i-1},j}^*   
	a_{s,k}^* a_{r,n+1}^*\\ 
&\quad - \sum_{s+2\leq r<j  \leq n+1} \sum_{k=s+2}^{n+1}a_{rj} 
         \sum_{\mathbf q;j= q_i;r\geq q_{i-1}\atop \exists t;\enspace q_t =s}  
         a_{1,q_2}^*\cdots a_{q_{t-1},s}^*a_{s,k}^*\cdots a_{q_{i-1},j}^*   a_{s,s+1}^* a_{r,n+1}^*  \\ 
&\quad  -\sum_{k=1}^{s-1}\sum_{l=s+2}^{n+1}\sum_{s+2\leq r<j  \leq n+1} a_{rj}  
	\sum_{\mathbf q;j= q_i;r\geq q_{i-1}\atop
	\exists t\leq i-1:(q_t,q_{t+1})=(k,s)}a_{1,q_2}^*\cdots a_{q_{t-1},k}^*a_{s,l}^*\cdots 
	a_{q_{i-1},j}^*   a_{k,s+1}^*a_{r,n+1}^* \\ 
&\quad  -\sum_{k=1}^{s-1}\sum_{s+2\leq r<j  \leq n+1} a_{rj}  
	\sum_{\mathbf q;j= q_i;r\geq q_{i-1}\atop
	\exists t\leq i-1:(q_t,q_{t+1})=(k,s)}a_{1,q_2}^*
		\cdots a_{q_{t-1},k}^* a_{k,s+1}^*a_{s+1,q_{t+3}}^*\cdots 
	a_{q_{i-1},j}^*  a_{s,s+1}^*a_{r,n+1}^* \\ 
&\quad +  \sum_{s+2\leq r<j  \leq n+1} \sum_{l=1}^{s-1}a_{rj} 
         \sum_{\mathbf q;j= q_i;r\geq q_{i-1}\atop \exists t;\enspace  q_t =s+1} 
           a_{1,q_2}^*\cdots a_{l,s+1}^*a_{s+1,q_{t+2}}^*\cdots a_{q_{i-1},j}^*   
        a_{s,s+1}^* a_{r,n+1}^*  \\  \\
&= \sum_{k=s+2}^{n+1}\sum_{l=1}^{s-1}\sum_{s+2\leq r<j\leq n+1}a_{rj}
	\sum_{\mathbf q: j=q_i\atop r \geq q_{i-1},\exists t:(q_t,q_{t+1})=(s+1,k)}
	a_{1,q_2}^*\cdots a_{q_{t-2},l}^*a_{l,s+1}^*a_{k,q_{t+2}}^*\cdots a_{q_{i-1},j}^*   
	a_{s,k}^* a_{r,n+1}^*\\ 
&\quad  -\sum_{k=1}^{s-1}\sum_{l=s+2}^{n+1}\sum_{s+2\leq r<j  \leq n+1} a_{rj}  
	\sum_{\mathbf q;j= q_i;r\geq q_{i-1}\atop
	\exists t\leq i-1:(q_t,q_{t+1})=(k,s)}a_{1,q_2}^*\cdots a_{q_{t-1},k}^*a_{s,l}^*\cdots 
	a_{q_{i-1},j}^*   a_{k,s+1}^*a_{r,n+1}^* \\ 
&= \sum_{k=s+2}^{n+1}\sum_{l=1}^{s-1}\sum_{s+2\leq r<j\leq n+1}a_{rj}
	\sum_{\mathbf q: j=q_i\atop r \geq q_{i-1},\exists t:(q_t,q_{t+1})=(s+1,k)}
	a_{1,q_2}^*\cdots a_{q_{t-2},l}^*a_{s,k}^*a_{k,q_{t+2}}^*\cdots a_{q_{i-1},j}^*   
	 a_{l,s+1}^*a_{r,n+1}^*\\ 
&\quad  -\sum_{k=1}^{s-1}\sum_{l=s+2}^{n+1}\sum_{s+2\leq r<j  \leq n+1} a_{rj}  
	\sum_{\mathbf q;j= q_i;r\geq q_{i-1}\atop
	\exists t\leq i-1:(q_t,q_{t+1})=(k,s)}a_{1,q_2}^*\cdots a_{q_{t-1},k}^*a_{s,l}^*\cdots 
	a_{q_{i-1},j}^*   a_{k,s+1}^*a_{r,n+1}^* \\ 
	&=0.
\end{align*}
\end{proof}

We are now left with the Serre relations:

\begin{lem}[T4]

\begin{align*} 
[\rho(F_r) (\mathbf w)_{\boldsymbol \lambda} \rho(F_s) (\mathbf w)]&=[\rho(E_r) (\mathbf w)_{\boldsymbol \lambda} \rho(E_s) (\mathbf w)]=0
  \ \text{if}\ 
    A_{rs}\neq -1 \\
 [\rho(F_r) (\mathbf w)_{\boldsymbol \lambda}[\rho(F_r) (\mathbf w)_{\boldsymbol \mu}\rho(F_s) (\mathbf w)]]
   &=[\rho(E_r) (\mathbf w)_{\boldsymbol \lambda}[\rho(E_r) (\mathbf w)_{\boldsymbol \mu}\rho(E_s) (\mathbf w)]]=0
    \ \text{if}\  A_{rs}= -1
\end{align*} 
\end{lem}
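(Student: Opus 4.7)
The plan is to divide the verification into cases by the value of $A_{rs}$ and by whether either index is zero. Write $P := \{(r,s) : 1 \le r,s \le n\}$ and treat:
(a) $r,s \in P$ with $|r-s|>1$,
(b) $r = s \in P$,
(c) $r,s \in P$ with $|r-s|=1$,
(d) at least one of $r,s$ equals $0$.

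For case (a), the index sets $\{(i,j) : a_{ij}, a_{ij}^* \text{ appears in } \rho(E_r)\}$ and the analogous set for $\rho(E_s)$ are disjoint, so Lemma \ref{prelimlem}(a) forces every cross $\boldsymbol \lambda$-bracket among the $a$- and $a^*$-pieces to vanish. The bosonic pieces yield $[\Phi(b_r)_{\boldsymbol \lambda}\Phi(b_s)] = -A_{rs}\kappa\cdot{\boldsymbol \lambda} = 0$ since $A_{rs}=0$, and the $a^*_{r,r+1}\Phi(b_r)$ and $\kappa\cdot D a_{r,r+1}^*$ tails similarly decouple using Lemma \ref{prelimlem}(c) and Lemma \ref{lemks}. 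Thus $[\rho(E_r)_{\boldsymbol \lambda}\rho(E_s)] = 0$, and the same disjointness argument handles $[\rho(F_r)_{\boldsymbol \lambda}\rho(F_s)]$.

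For case (b), I would expand $[\rho(E_r)_{\boldsymbol \lambda}\rho(E_r)]$ by the Leibniz rule (\ref{rule2}) and the Jacobi-like identity, then organize terms by which $a_{ij}$ or $a_{ij}^*$ is being contracted. The $(a_{r,r+1}a^*_{r,r+1})^2$ pieces vanish by Lemma \ref{prelimlem}(b); each cross term like $[a_{r,r+1}(a^*_{r,r+1})^2{}_{\boldsymbol \lambda} a^*_{r,r+1}\Phi(b_r)]$ produces a bosonic tail that must cancel the anomaly $[\Phi(b_r)_{\boldsymbol \lambda}\Phi(b_r)] = -2\kappa\cdot {\boldsymbol \lambda}$ together with the contribution $[a_{r,r+1}(a^*_{r,r+1})^2{}_{\boldsymbol \lambda} \kappa\cdot Da^*_{r,r+1}]$ from Lemma \ref{lemks}. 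The cancellation is exactly parallel to the affine case in \cite{MR2003g:17034,C2}, with $b_r(z)$ replaced by $\Phi(b_r)$ and the factor $\frac{\gamma}{2}\partial a^*_{r,r+1}$ replaced by $\kappa\cdot D a^*_{r,r+1}$. The same organization proves $[\rho(F_r)_{\boldsymbol \lambda}\rho(F_r)]=0$ directly, by Lemma \ref{prelimlem}(a).

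For case (c), say $s = r+1$, I would apply the Jacobi rule to reduce $[\rho(E_r)_{\boldsymbol \lambda}[\rho(E_r)_{\boldsymbol \mu}\rho(E_{r+1})]]$ to brackets that have already been computed while proving (T2) and the $s \ne r$ subcases of case (b); the key point is that $[\rho(E_r)_{\boldsymbol\mu}\rho(E_{r+1})]$ is a specific element of the Fock algebra whose further bracket with $\rho(E_r)$ can be organized by the same Lemma \ref{prelimlem} book-keeping as in (b). Since $A_{r,r+1} = -1$, both the bosonic anomaly from $[\Phi(b_r)_{\boldsymbol \lambda}\Phi(b_{r+1})] = \kappa\cdot {\boldsymbol \lambda}$ and the $\kappa\cdot D$ derivative terms must telescope; the bookkeeping is controlled because each monomial of $[\rho(E_r)_{\boldsymbol \mu}\rho(E_{r+1})]$ contains at most one of the sensitive factors $a^*_{r,r+1}$ or $a^*_{r+1,r+2}$. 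The analogous argument handles the $F$-side when neither index is $0$.

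Case (d) is the main obstacle. For $s \ne 0$ the identity $\rho(H_0) = -\sum_{r=1}^n \rho(H_r)$ does not help directly, but the simple form $\rho(E_0) = -a_{1,n+1}$ makes $[\rho(E_0)_{\boldsymbol \lambda}\rho(E_s)]$ tractable: only the $a^*_{1,\cdot}$ or $a^*_{\cdot,n+1}$ factors in $\rho(E_s)$ can contribute, and by the structure of $\rho(E_s)$ they only appear in a controlled way, isolating the cases $s=1$ and $s=n$ (where $A_{0s} = -1$) versus $s$ in between (where $A_{0s}=0$). The genuinely hard subcase is the $F_0$-Serre relation, since $\rho(F_0)$ is a sum over chains of partitions $\mathbf q$; here the strategy is to use the template identity (\ref{equ:ID}) together with Lemma \ref{lemks} in the same way as in the (T2) proof for $[\rho(H_k)_{\boldsymbol \lambda}\rho(F_0)]$, reducing each bracket $[\rho(F_0)_{\boldsymbol \mu}\rho(F_s)]$ or $[\rho(F_0)_{\boldsymbol \lambda}[\rho(F_0)_{\boldsymbol \mu}\rho(F_s)]]$ to sums indexed by partitions that admit pairwise telescoping. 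I expect the bulk of the work to be matching sums of products $\prod a^*_{q_l q_{l+1}}$ that differ only by insertion or removal of a consecutive pair $(s,s+1)$ or $(s+1, s+2)$, exactly as in the relations (\ref{HkF0part1}) and (\ref{HkF0part2}); the cancellation of $\kappa\cdot D$ and $\Phi(b_r)$ tails must be handled separately, in parallel with the long $[\rho(E_s)_{\boldsymbol \lambda}\rho(F_0)]$ computation already given.
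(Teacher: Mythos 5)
Your case division and your identification of where the difficulty lies both match the paper's own organization: the paper likewise disposes of the cases $r,s\neq 0$ by the disjoint-index argument and by citing \cite[Lemma 3.5]{MR2003g:17034} with $\Phi(b_r)$ and $\kappa\cdot Da^*_{r,r+1}$ substituted for the affine expressions, and it likewise isolates the $F_0$ relations as the real content. However, for precisely those $F_0$ cases your proposal stops at the level of strategy, and here the strategy is not yet a proof. What the paper actually has to do is: (i) compute $[\rho(F_0)_{\boldsymbol \lambda}\rho(F_s)]$ in closed form by organizing roughly ten families of partition-indexed sums and showing they telescope pairwise except when $s\in\{1,n\}$ --- this already occupies several pages and depends on delicate bookkeeping of where the consecutive pair $(q_k,q_{k+1})=(s,s+1)$ can sit inside a chain $1=q_1<\cdots<q_i<n+1$, with the boundary cases $s=1$ and $s=n$ behaving differently because the pair collides with the endpoints; (ii) verify \emph{both} orderings of the double bracket, namely $[[\rho(F_0)_{\boldsymbol \lambda}\rho(F_n)]_{\boldsymbol \mu}\rho(F_n)]$ and $[\rho(F_0)_{\boldsymbol \lambda}[\rho(F_0)_{\boldsymbol \mu}\rho(F_1)]]$, which are genuinely different computations (your sketch only mentions the second shape); and (iii) close the argument with combinatorial identities for the partition generating sums $\mathcal B(t)$, $\mathcal C(t)$, which are what force $[B_{\boldsymbol \lambda}B_{01}]+[A_{\boldsymbol \lambda}C_{01}]+[C_{\boldsymbol \lambda}A_{01}]=0$; these identities are not a consequence of \lemref{prelimlem} or of (\ref{equ:ID}) alone and have to be proved separately. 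Saying that you ``expect the bulk of the work to be matching sums of products'' names the right mechanism but does not establish that the matching succeeds, and in this computation the cancellations are exact only after the $\kappa\cdot({\boldsymbol \lambda}+D)$ terms generated by \lemref{lemks} are tracked through every family.

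A secondary gap: in your case (c) you assert that each monomial of $[\rho(E_r)_{\boldsymbol \mu}\rho(E_{r+1})]$ contains at most one of the factors $a^*_{r,r+1}$, $a^*_{r+1,r+2}$, and you lean on this to control the second bracket. This is plausible but unverified, and $\rho(E_r)$ itself contains $a_{r,r+1}(a^*_{r,r+1})^2$ with a repeated sensitive factor, so the claim needs an actual check of which contractions survive. The paper avoids this issue entirely by importing the $rs\neq 0$ Serre relations wholesale from \cite{MR2003g:17034}; if you intend to prove them from scratch rather than cite, you must either verify that claim or compute $[\rho(E_r)_{\boldsymbol \mu}\rho(E_{r+1})]$ explicitly before taking the outer bracket.
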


\begin{proof}  As in the previous lemmas we first assume $rs\neq 0$.   In this case the proof is exactly the same as in \cite[Lemma 3.5]{MR2003g:17034} with the exception of a sign change in the formulation of the $\rho(F_r)$.
Let us check the relations for $\rho(F_r)$. 

Now suppose $r=0$, so that
for $s\neq 0$ we get 
\begin{align}
[\rho(F_0)&_{\boldsymbol \lambda} \rho(F_s)]\label{f0fs}\\
&=\Big[ \big(- \sum_{1\leq r<j  \leq n+1} a_{rj}  \sum_{\mathbf q;j= q_i,r\geq q_{i-1}} \prod_{l=1}^{i-1} 
	a_{q_l q_{l+1}}^* a_{r,n+1}^*   -\sum_{1\leq r <n+1}\sum_{\mathbf q;\,r\geq q_i } 
    	\prod_{j=1}^i a_{q_{j}q{_{j +1} }}^*   \Phi b_r   \notag \\ 
&\quad -\sum_{1\leq r <n+1}\sum_{\mathbf q;\, r= q_i } 
   \prod_{j=1}^{i -1}a_{q_{j}q{_{j +1} }}^* \kappa\cdot Da_{r,n+1}^* \big) _{\boldsymbol \lambda} 
   \big(
  a_{s,s+1}-\sum_{j=1}^{s-1}a_{j,s+1}a_{js}^*\big)\Big]  \notag\\ \notag\\
&=\Big[ \big(-\sum_{1\leq r<j  \leq n+1} a_{rj}  \sum_{\mathbf q;j= q_i,r\geq q_{i-1}}  \prod_{l=1}^{i-1} a_{q_l q_{l+1}}^* a_{r,n+1}^*   \big) {_{\boldsymbol \lambda}}(
 a_{s,s+1})\Big] \notag\\
&\quad -\Big[\big(\sum_{1\leq r <n+1}\sum_{\mathbf q;\, r\geq q_i } 
    \prod_{j=1}^i a_{q_{j}q{_{j +1} }}^*   \Phi b_r \big){_{\boldsymbol \lambda} }(
 a_{s,s+1} )\Big]  \notag\\ 
&\quad -\big(\sum_{1\leq r <n+1}\sum_{\mathbf q;\, r= q_i } 
   \prod_{j=1}^{i -1}a_{q_{j}q{_{j +1} }}^* \kappa\cdot Da_{r,n+1}^* \big) _{\boldsymbol \lambda} 
   \big(
 a_{s,s+1} \big)\Big]  \notag\\ \notag\\
 &\quad-\Big[ \big( -\sum_{1\leq r<j  \leq n+1} a_{rj}  \sum_{j= q_i\mathbf q} \prod_{l=1}^{i-1} a_{q_l q_{l+1}}^* a_{r,n+1}^*\big)_{\boldsymbol \lambda} 
   \big( \sum_{j=1}^{s-1}a_{j,s+1}a_{js}^*\big)\Big] \notag\\
&\quad +\Big[\big(\sum_{1 \leq r <n+1}\sum_{\mathbf q\atop r\geq q_i} 
    \prod_{j=1}^i a_{q_{j}q{_{j +1} }}^*   \Phi b_r  )_{\boldsymbol \lambda} 
   \big( \sum_{j=1}^{s-1}a_{j,s+1}a_{js}^*\big)\Big] \notag  \\ 
&\quad +\Big[\big(\sum_{1\leq r <n+1}\sum_{\mathbf q,r=q_i} 
   \prod_{j=1}^{i -1}a_{q_{j}q{_{j +1} }}^* \kappa\cdot Da_{r,n+1}^* \big) _{\boldsymbol \lambda} 
   \big( \sum_{j=1}^{s-1}a_{j,s+1}a_{js}^*\big)\Big] \notag
\end{align} 
(In the above we use the notation $\mathbf q=(q_1,q_2,\dots, q_i)$ where we sum over $1=q_1<\cdots <q_i<q_{i+1}=n+1$.  We also some times let $l(\mathbf q)$ denote the second to th last index in the multi-indexed element $\mathbf q$ if its last index is $n+1$, otherwise we let $l(\mathbf q)$ denote the last index.  For example if $n=5$ and $\mathbf q=(1,3,5)$ then $l(\mathbf q)=2$ and if $\mathbf q=(1,2,4)$, then $l(\mathbf q)=3$. ) Now we break this up into the following calculations simplifying the six summations above:   The first summand becomes

\begin{align*}
&-\Big[   \sum_{1\leq r<j  \leq n+1} a_{rj}  \sum_{j= q_i;r\geq q_{i-1}\mathbf q} \prod_{l=1}^{i-1} a_{q_l q_{l+1}}^* a_{r,n+1}^*  {_{\boldsymbol \lambda}}(
 a_{s,s+1})\Big] \\
&=   \delta_{s,n}a_{n,n+1}   \sum_{\mathbf q\atop n+1= q_i  } \prod_{k=1}^{i-1} a_{q_k q_{k+1}}^*   
 - \sum_{s\leq r< j \leq n+1} a_{rj}   \sum_{\mathbf q ;\, j=q_i;r\geq q_{i-1}} \prod_{k=1}^{i-1} \Big[ a_{q_k q_{k+1}}^*   {_{\boldsymbol \lambda}}
  a_{s,s+1}\Big]a_{r,n+1}^* \\
&=    \delta_{s,n}a_{n,n+1}   \sum_{\mathbf q\atop n+1= q_i  } \prod_{k=1}^{i-1} a_{q_k q_{k+1}}^*  
 + \sum_{s\leq r< j \leq n+1}  \sum_{\mathbf q ;\, j=q_i;r\geq q_{i-1} \atop \exists 1\leq k\leq i-1: \,(q_k,q_{k+1})=(s,s+1)}  a_{rj}  \prod_{j=1,j\neq k}^{i-1} a_{q_j q_{j+1}}^*   a_{r,n+1}^*  
\end{align*} 
The second and third summands in \eqnref{f0fs} simplify to 
\begin{align*}
\sum_{r\leq n}\Big[&\big(\sum_{\mathbf q \atop  q_i\leq r } 
    \prod_{j=1}^i a_{q_{j}q{_{j +1} }}^*   \Phi b_r \big){_{\boldsymbol \lambda} }(
 a_{s,s+1} )\Big]  =-\sum_{r\leq n}\sum_{\mathbf q \atop r\geq q_i \exists 1\leq k\leq i: \,(q_k,q_{k+1})=(s,s+1)} 
    \prod_{j=1,j\neq k}^i a_{q_{j}q{_{j +1} }}^*   \Phi b_r 
\end{align*}
and 
\begin{align*}
\sum_{1\leq r\leq n}\Big[\big(&\sum_{\mathbf q  ;q_i=r} 
   \prod_{j=1}^{i -1}a_{q_{j}q{_{j +1} }}^* \kappa\cdot Da_{r,n+1}^* \big) _{\boldsymbol \lambda} 
   \big(
 a_{s,s+1} \big)\Big]   \\
 &=-(1-\delta_{s,n})\sum_{1\leq r\leq n}\sum_{\mathbf q \atop   \exists 1\leq k\leq i-1: \,(q_k,q_{k+1})=(s,s+1);q_i=r} 
   \prod_{j=1,j\neq k}^{i -1}a_{q_{j}q{_{j +1} }}^* \kappa\cdot Da_{r,n+1}^* \\
 &\quad +\delta_{s,n}\sum_{\mathbf q ;q_i=n} \kappa\cdot (D+{\boldsymbol \lambda})
   \prod_{j=1}^{i -1}a_{q_{j}q{_{j +1} }}^*  .
\end{align*}
The last three summands in \eqnref{f0fs} are
\begin{align*}
 &-\Big[ \big( -\sum_{1\leq r<j  \leq n+1} a_{rj}  \sum_{\mathbf q;j= q_i;r\geq q_{i-1}} \prod_{l=1}^{i-1} a_{q_l q_{l+1}}^* a_{r,n+1}^*  \big) )_{\boldsymbol \lambda} 
   \big( \sum_{l=1}^{s-1}a_{l,s+1}a_{ls}^*\big)\Big] \\
 &= \sum_{l=1}^{s-1} \sum_{1\leq r<j  \leq n+1} a_{rj}  \sum_{\mathbf q;j= q_i;r\geq q_{i-1}} \prod_{l=1}^{i-1} a_{q_l q_{l+1}}^* \Big[a_{r,n+1}^*  {_{\boldsymbol \lambda}} 
    a_{l,s+1}\Big]a_{ls}^*  \\
 &\quad +\sum_{l=1}^{s-1} \sum_{1\leq r<j  \leq n+1}\Big[ a_{rj} {_{\boldsymbol \lambda}} a_{ls}^* \Big]   a_{l,s+1} \sum_{j= q_i\mathbf q} \prod_{l=1}^{i-1} a_{q_l q_{l+1}}^* a_{r,n+1}^* 
    \\
&\quad+\sum_{l=1}^{s-1} \sum_{1\leq r<j  \leq n+1} a_{rj}  \sum_{\mathbf q;j= q_i;r\geq q_{i-1}} \prod_{l=1}^{i-1} \Big[a_{q_l q_{l+1}}^*   {_{\boldsymbol \lambda}} 
    a_{l,s+1}\Big]a_{ls}^* a_{r,n+1}^*\\
&= -\delta_{s,n} \sum_{l=1}^{s-1}\sum_{l<j\leq n+1}a_{lj}  \sum_{\mathbf q;j= q_i;l\geq q_{i-1}} \prod_{l=1}^{i-1} a_{q_l q_{l+1}}^*a_{ls}^* \\
 &\quad +\sum_{l=1}^{s-1} a_{l,s+1} \sum_{\mathbf q;s= q_i;l\geq q_{i-1}} \prod_{l=1}^{i-1} a_{q_l q_{l+1}}^* a_{l,n+1}^* 
    \\
 &\quad -\sum_{l=1}^{s-1} \sum_{r< j } a_{rj}   \sum_{\mathbf q; j=q_i; \,r\geq q_{i-1}  \atop \exists a,(q_a,q_{a+1})=(l,s+1) } \left(\prod_{k=1,k\neq a}^i a_{q_k q_{k+1}}^* \right),
  a_{ls}^* a_{r,n+1}^*,
\end{align*}

\begin{align*}
  \sum_{ 1\leq  r<n+1}&\sum_{\mathbf q \atop r\geq q_i }\sum_{l=1}^{s-1} \Big[
    \prod_{j=1}^i a_{q_{j}q{_{j +1} }}^*   \Phi b_r { _{\boldsymbol \lambda} }
  a_{l,s+1}a_{ls}^* \Big]  \\
  =& -\sum_{l=1}^{s-1}\sum_{  1\leq  r<n+1}\ \sum_{\mathbf q; r\geq q_i,\atop\exists a,(q_a,q_{a+1})=(l,s+1) }a_{ls}^* \prod_{k=1,k\neq a}^i a_{q_k q_{k+1}}^* 
 \Phi b_r    \end{align*}
and
\begin{align*}
  \sum_{  1\leq  r<n+1}&\Big[ \sum_{\mathbf q;\,q_i=r}  
  \big( \prod_{j=1}^{i -1}a_{q_{j}q{_{j +1} }}^* \kappa\cdot Da_{r,n+1}^* \big) _{\boldsymbol \lambda} 
   \big( \sum_{l=1}^{s-1}a_{l,s+1}a_{ls}^*\big)\Big] \\
&= - (1-\delta_{s,n})\sum_{  1\leq  r<n+1} \sum_{l=1}^{s-1}  \sum_{\mathbf q;\,q_i=r ,\exists a,(q_a,q_{a+1})=(l,s+1) } \left(\prod_{k=1,k\neq a}^{i-1} a_{q_k q_{k+1}}^* \right)a_{ls}^*\kappa\cdot Da_{r,n+1}^*   \\
 &+\delta_{s,n}\sum_{l=1}^{n-1} \sum_{\mathbf q;q_i=l} \kappa\cdot(D+{\boldsymbol \lambda})
  \left( \prod_{j=1}^{i -1}a_{q_{j}q{_{j +1} }}^*\right)  a_{ln}^*.
\end{align*}

Putting it all together we get
\begin{align}\label{f0fs.0}
[\rho(F_0)_{\boldsymbol \lambda} \rho(F_s)]&=  \delta_{s,n}a_{n,n+1}   \sum_{\mathbf q\atop n+1= q_i  } \prod_{k=1}^{i-1} a_{q_k q_{k+1}}^*   \\
 &+\sum_{s\leq r< j \leq n+1} a_{rj}   \sum_{\mathbf q ;\, j=q_i;\exists 1\leq k\leq i: \,(q_k,q_{k+1})=(s,s+1)}  	\prod_{j=1,j\neq k}^{i-1} a_{q_j q_{j+1}}^* a_{r,n+1}^*  \notag \\
&+\sum_{r\leq n}\sum_{\mathbf q \atop r\geq q_i \exists 1\leq k\leq i: \,(q_k,q_{k+1})=(s,s+1)} 
    	\prod_{j=1,j\neq k}^i a_{q_{j}q{_{j +1} }}^*   \Phi b_r \notag \\
&+ (1-\delta_{s,n})\sum_{1\leq r\leq n}\sum_{\mathbf q \atop   \exists 1\leq k\leq i-1: \,(q_k,q_{k+1})=(s,s+1);q_i=r} 
   	\prod_{j=1,j\neq k}^{i -1}a_{q_{j}q{_{j +1} }}^* \kappa\cdot Da_{r,n+1}^* \notag \\
& -\delta_{s,n}\sum_{\mathbf q ;q_i=n}  \kappa\cdot (D+{\boldsymbol \lambda})
   	\prod_{j=1}^{i -1}a_{q_{j}q{_{j +1} }}^* \notag \\
&-\delta_{s,n} \sum_{l=1}^{s-1}\sum_{l<j\leq n+1}a_{lj}  \sum_{\mathbf q;j= q_i;l\geq q_{i-1}} 	
	\prod_{l=1}^{i-1} a_{q_l q_{l+1}}^*a_{ls}^* \notag \\
 &+\sum_{l=1}^{s-1} a_{l,s+1} \sum_{\mathbf q;s= q_i;r\geq q_{i-1}} \prod_{l=1}^{i-1} 
 	a_{q_l q_{l+1}}^* a_{l,n+1}^* \notag \\
 &  -\sum_{l=1}^{s-1} \sum_{r< j } a_{rj}   
 	\sum_{\mathbf q; j=q_i; \,r\geq q_{i-1}  \atop \exists a,(q_a,q_{a+1})=(l,s+1) } 
	\left(\prod_{k=1,k\neq a}^{i-1} a_{q_k q_{k+1}}^* \right)
  	a_{ls}^* a_{r,n+1}^* \notag\\ 
&- \sum_{l=1}^{s-1}\sum_{  1\leq  r<n+1}\ \sum_{\mathbf q\atop r\geq q_i,\exists a,(q_a,q_{a+1})=(l,s+1) } 	\left(\prod_{k=1,k\neq a}^i a_{q_k q_{k+1}}^* \right)a_{ls}^* \Phi b_r    \notag\\
&- (1-\delta_{s,n}) \sum_{  1\leq  r<n+1} \sum_{l=1}^{s-1} 
	 \sum_{\stack{\mathbf q;\,q_i=r }{\exists a,(q_a,q_{a+1})=(l,s+1) } }	
	\left(\prod_{k=1,k\neq a}^{i-1} a_{q_k q_{k+1}}^* \right)a_{ls}^*\kappa\cdot Da_{q_i,n+1}^*  \notag \\
 &+\delta_{s,n}\sum_{l=1}^{n-1} \sum_{\mathbf q;q_i=l} \kappa\cdot (D+{\boldsymbol \lambda})
 	 \left( \prod_{j=1}^{i -1}a_{q_{j}q{_{j +1} }}^*\right)  a_{ln}^* \notag
\end{align}

Case I: $1<s<n$:   If $s>1$, then the third summation with $\Phi(b_r)$ in it, sums over all partitions $\mathbf q$ where there exists some $1\leq k\leq i$ and $q_i\leq r$,  such that 
$$
(1,q_2,\dots,q_k,q_{k+1},\dots ,q_i,n+1)=(1,q_2,\dots,s,s+1\dots ,q_i,n+1)
$$
where the $s$ and $s+1$ are in the $k$th, respectively $k+1$-st entry. Note $k>1$ as otherwise $s=1$.  Thus if $s\leq n-1$, then products appearing in this third summation look like
$$
\prod_{j=1,j\neq k}^i a_{q_{j}q{_{j +1} }}^*  \Phi(b_r) =a_{1,q_2}^*\cdots a_{q_{k-1},s}^*a_{s+1,q_{k+2}}^*\cdots a_{q_{i-1},q_i}^*a_{q_i,n+1}^* \Phi(b_r)
$$
In the ninth summation with $\Phi(b_r)$ in it, the sum is over all partitions $\mathbf q$ where there exists some $1\leq a\leq i-1$ and $q_i\leq r$ such that 
$$
(1,q_2,\dots,q_a,q_{a+1},\dots ,q_i,n+1)=(1,q_2,\dots,l,s+1\dots ,q_i,n+1)
$$
where the $l$ and $s+1$ are in the $a$th, respectively $a+1$-st entry. Note $1<a\leq n$ as otherwise $s=1$.  Thus if $s\leq n-1$,  products appearing in this ninth summation look like
$$
\left(\prod_{k=1,k\neq a}^i a_{q_k q_{k+1}}^* \right)a_{ls}^* \Phi(b_r)  =a_{1,q_2}^*\cdots a_{q_{k-1},l}^*a_{s+1,q_{k+2}}^*\cdots a_{q_{i-1},q_i}^* a_{q_i,n+1}^*a_{ls}^* \Phi(b_r)
$$
where $l\leq s-1$ and $r\geq q_i\geq l$.  Thus the third and ninth summations are equal but of opposite sign and they cancel when $s\leq n-1$.

   If $1<s<n$, then the fourth summation with $\kappa\cdot Da_{q_i,n+1}^*$ in it,  sums over all partitions $\mathbf q$ where there exists some $1\leq k\leq i-1$ such that 
$$
(1,q_2,\dots,q_k,q_{k+1},\dots ,q_{i-1},q_i,n+1)=(1,q_2,\dots,s,s+1\dots ,q_{i-1},r,n+1)
$$
where the $s$ and $s+1$ are in the $k$th, respectively $k+1$-st entry. Note $k>1$ as otherwise $s=1$.  The products appearing in this fourth summation look like
$$
\prod_{j=1,j\neq k}^{i -1}a_{q_{j}q{_{j +1} }}^* \kappa\cdot Da_{q_i,n+1}^*
=a_{1,q_2}^*\cdots a_{q_{k-1},s}^*a_{s+1,q_{k+2}}^*\cdots a_{q_{i-1}q_i}^*\kappa\cdot Da_{r ,n+1}^*.
$$
In the tenth summation with $\kappa\cdot Da_{q_i,n+1}^*$ in it, the sum is over all partitions $\mathbf q$ where there exists some $1\leq a\leq i-1$ such that 
$$
(1,q_2,\dots,q_a,q_{a+1},\dots ,q_{i-1},q_i,n+1)=(1,q_2,\dots,l,s+1\dots ,q_{i-1},r,n+1)
$$
where the $l$ and $s+1$ are in the $a$th, respectively $a+1$-st entry. Note $a>1$ as otherwise $s=1$.  The products appearing in this tenth summation look like (after a change of indices $k\mapsto j$ and $a\mapsto k$)
$$
 \left(\prod_{j=1,j\neq k}^{i-1} a_{q_j q_{j+1}}^* \right)a_{ls}^*\kappa\cdot Da_{q_i,n+1}^*  =a_{1,q_2}^*\cdots a_{q_{k-1},l}^*a_{s+1,q_{k+2}}^*\cdots a_{q_{i-1}q_i}^*a_{ls}^*\kappa\cdot Da_{r,n+1}^* 
$$
where $l\leq s-1$ and $q_i= r<n+1$.  Thus the forth and the tenth summations are equal but of opposite sign and they cancel.  Moreover  the fifth and the last summations are zero as $s<n$.

Case II : $s=n$:  

If $s=n$, then the indices in the third summation are $n=s=q_k$, $k=i$, $s+1=q_{i+1}=n+1$ and $r= q_i=n$. Then in this caee the third summation consists of products of the form
$$
\prod_{j=1}^{l-1} a_{q_{j}q{_{j +1} }}^*  \Phi(b_n) =a_{1,q_2}^*\cdots a_{q_{i-1},n}^*  \Phi(b_n)
$$
 as 
 $$
(1,q_2,\dots,q_k,q_{k+1} )=(1,q_2,\dots,q_{i-1},n,n+1),
$$
whereas the ninth summation consists of products of the form
$$
\left(\prod_{k=1,k\neq i-1}^i a_{q_k q_{k+1}}^* \right)a_{ln}^* \Phi(b_r)  =a_{1,q_2}^*\cdots a_{q_{i-1},l}^*a_{ln}^* \Phi(b_r)
$$
where $l\leq n-1$ and $r\geq q_i=l$ (as $(q_a,q_{a+1})=(l,n+1)$ implies that $a+1=i+1$ and hence $q_i=l$).   Note the difference in the coefficient in front of $\Phi(b_n)$ in the two summands, so that not all the terms with the $\Phi(b_r)$ in them cancel when $s=n$.  Thus for $s=n$, we are left  with 
\begin{align*}
&+\sum_{r\leq n}\sum_{\mathbf q \atop r\geq q_i \exists 1\leq k\leq i: \,(q_k,q_{k+1})=(n,n+1)} 
    	\prod_{j=1,j\neq k}^i a_{q_{j}q{_{j +1} }}^*   \Phi b_r  \\
&- \sum_{l=1}^{n-1}\sum_{  1\leq  r<n+1}\ \sum_{\mathbf q\atop r\geq q_i,\exists a,(q_a,q_{a+1})=(l,n+1) } 	\left(\prod_{k=1,k\neq a}^i a_{q_k q_{k+1}}^* \right)a_{ln}^* \Phi b_r    \\
&=- \sum_{l=1}^{n-1}\sum_{  1\leq  r<n}\ \sum_{\mathbf q\atop r\geq q_{l(\mathbf q)}=l} 	\left(\prod_{k=1}^{l(\mathbf q)-1}a_{q_k q_{k+1}}^* \right)a_{ln}^* \Phi b_r .
\end{align*}

If $s=n$, 
then the fourth summation  and the tenth are both zero due to the factor $(1-\delta_{s,n}$ in both. 
Moreover the second summation in \eqnref{f0fs.0} has products of the form
\begin{equation}\label{mess6.1}
 a_{rj}\prod_{j=1,j\neq k}^{i-1} a_{q_j q_{j+1}}^*= a_{rj}a_{1q_1}^*\cdots a_{q_{k-1}s}^*a_{s+1,q_{k+1}}^*\cdots a_{q_{i-1}j}^*a_{r,n+1}^*
\end{equation}
with $s\leq r<j=q_i$, $s+1\leq q_i=j$, whereas the seventh summation, after setting $r=l$, has products of the form
\begin{equation}\label{seventh}
 a_{r,s+1}\prod_{t=1}^{i-1} a_{q_t q_{t+1}}^*a_{r,n+1}^*= a_{r,s+1}a_{1q_1}^*\cdots a_{q_{i-1}s}^*a_{r,n+1}^*,
 \end{equation}
with $r<s=q_i$ and $q_{i-1}\leq r<j$.  
The eighth summation 
\begin{align*}
a_{rj} \left(\prod_{k=1,k\neq a}^i a_{q_k q_{k+1}}^* \right)
  a_{ls}^* a_{r,n+1}^*&=a_{rj} a_{1,q_1}^*\cdots a_{q_{a-1},l}^*a_{s+1,q_{a+2}}^*a_{q_{i-1} j}^*  
  a_{ls}^* a_{r,n+1}^*  \\
  &=a_{rj} a_{1,q_1}^*\cdots a_{q_{a-1},l}^*a_{ls}^* a_{s+1,q_{a+2}}^*a_{q_{i-1} j}^*  
  a_{r,n+1}^*  
\end{align*}
where $r$ has the restriction that $q_{i-1}\leq r<j=q_i$, $l\leq s-1$ and $s+1\leq q_i=j$.   If we consider the eighth summation when $a=i-1$ so $(q_{i-1},j)=(q_{i-1},q_{i})=(l,s+1)$, then this part of the eighth summation has products of the form 
\begin{equation}\label{mess6.4}
a_{r,s+1} a_{1,q_1}^*\cdots a_{q_{a-1},l}^*a_{ls}^*
  a_{r,n+1}^*.
\end{equation}
These summands cancel when $r<s$ with summands in the seventh summation in \eqnref{seventh}.  When $r=s$ (and $a=i-1$ so $(q_{i-1},j)=(q_{i-1},q_{i})=(l,s+1)$), the summands \eqnref{mess6.4} in the eighth summation cancel with the summands in the second summation in \eqnref{mess6.1} where $(r,j)=(s,s+1)$.  
If $a<i-1$, then in the eighth summation $a+1\leq i-1$, so that $s+1=q_{a+1}\leq q_{i-1}\leq r$ and these  terms in the eighth summation cancel with the remaining summands in the second summation of the form \eqnref{mess6.1}.

Hence for $s>1$, we get from \eqnref{f0fs.0} 
\begin{align}
[\rho(F_0)_{\boldsymbol \lambda} \rho(F_s)]&=\delta_{s,n}a_{n,n+1}   \sum_{\mathbf q\atop  q_i=n+1 } \prod_{k=1}^{i-1} a_{q_k q_{k+1}}^*  - \delta_{s,n}\sum_{l=1}^{n-1}\sum_{l<j\leq n+1}a_{lj}  \sum_{\mathbf q;j= q_i;l\geq q_{i-1}} \prod_{m=1}^{i-1} a_{q_m q_{m+1}}^*a_{ln}^*   \\ 
&\quad - \delta_{s,n}\sum_{\mathbf q ;q_i=n}  \kappa\cdot (D+{\boldsymbol \lambda})
   	\prod_{j=1}^{i -1}a_{q_{j}q{_{j +1} }}^* \notag \\
  &\quad - \delta_{s,n} \sum_{l=1}^{n-1}\sum_{  1\leq  r<n}
  	\sum_{\mathbf q\atop r\geq q_i,\exists a,(q_a,q_{a+1})=(l,n+1) } 	
	\left(\prod_{k=1,k\neq a}^i a_{q_k q_{k+1}}^* \right)a_{ln}^* \Phi b_r    \notag\\
&\quad +\delta_{s,n}\sum_{l=1}^{n-1} \sum_{\mathbf q;q_i=l} \kappa\cdot (D+{\boldsymbol \lambda})
 	 \left( \prod_{j=1}^{i -1}a_{q_{j}q{_{j +1} }}^*\right)  a_{ln}^*.   \notag
\end{align} 
This proves the Serre relation for $s\neq 0,1,n$.    If $s=n$ we have 
\begin{align*}
[\rho(F_0)_{\boldsymbol \lambda} \rho(F_n)]&=a_{n,n+1}   \sum_{\mathbf q\atop  q_i=n+1 } \prod_{k=1}^{i-1} a_{q_k q_{k+1}}^*  - \sum_{l=1}^{n-1}\sum_{l<j\leq n+1}a_{lj}  \sum_{\mathbf q;j= q_i;l\geq q_{i-1}} \prod_{m=1}^{i-1} a_{q_m q_{m+1}}^*a_{ln}^*   \\ 
&\quad  - \sum_{\mathbf q ;q_i=n}  \kappa\cdot (D+{\boldsymbol \lambda})
   	\prod_{j=1}^{i -1}a_{q_{j}q{_{j +1} }}^* \\
  &\quad -  \sum_{l=1}^{n-1}\sum_{  1\leq  r<n}
  	\sum_{\mathbf q\atop r\geq q_i,\exists a,(q_a,q_{a+1})=(l,n+1) } 	
	\left(\prod_{k=1,k\neq a}^i a_{q_k q_{k+1}}^* \right)a_{ln}^* \Phi b_r    \\
&\quad +\sum_{l=1}^{n-1} \sum_{\mathbf q;q_i=l} \kappa\cdot (D+{\boldsymbol \lambda})
 	 \left( \prod_{j=1}^{i -1}a_{q_{j}q{_{j +1} }}^*\right)  a_{ln}^* .
\end{align*} 
We want to show $[[\rho(F_0)_{\boldsymbol \lambda} \rho(F_n)]_{\boldsymbol \mu}\rho(F_n)]=0$.  To prove this first recall
$\displaystyle{
\rho(F_n)=a_{n,n+1}-\sum_{p=1}^{n-1}a_{p,n+1}a_{p,n}^*
}$.

Now 
\begin{align*}
[[\rho(F_0)_{\boldsymbol \lambda}& \rho(F_n)]_{\boldsymbol \mu} a_{n,n+1}]\\
=  &
 [\big(a_{n,n+1}   \sum_{\mathbf q\atop  q_i=n+1 } \prod_{k=1}^{i-1} a_{q_k q_{k+1}}^*  -  \sum_{l=1}^{n-1}\sum_{l<j\leq n+1}a_{lj}  \sum_{r,\mathbf q;j= q_i;l\geq q_{i-1}} \prod_{m=1}^{i-1} a_{q_m q_{m+1}}^*a_{ln}^* \big){_{\boldsymbol \mu}} a_{n,n+1}]  \\
 =&-a_{n,n+1}   \sum_{\mathbf q\atop  q_{i-1}=n } \prod_{k=1}^{i-2} a_{q_k q_{k+1}}^* ,
\end{align*} 
whereas 
\begin{align*}
-\sum_{p=1}^{n-1}&[[\rho(F_0)_{\boldsymbol \lambda}  \rho(F_n)]_{\boldsymbol \mu} a_{p,n+1}a_{p,n}^*] \\
=  
&-\sum_{p=1}^{n-1}[\big(a_{n,n+1}   \sum_{\mathbf q\atop q_i=n+1 } \prod_{k=1}^{i -1}a_{q_k q_{k+1}}^*  - \sum_{1\leq l<j\leq n+1}a_{lj}  \sum_{\mathbf q;j= q_i;l\geq q_{i-1}} \prod_{m=1}^{i-1} a_{q_m q_{m+1}}^*a_{ln}^* \big){_{\boldsymbol \mu}} a_{p,n+1}a_{p,n}^*]  \\ \\
 =&a_{n,n+1}  \sum_{p=1}^{n-1}  \sum_{\mathbf q\atop q_{i-1}=p } \prod_{k=1}^{i-2} a_{q_k q_{k+1}}^*  
a_{p,n}^* +\sum_{l=1}^{n-1}a_{l,n+1}  \sum_{\mathbf q;q_i=n;l\geq q_{i-1}} \prod_{m=1}^{i-1} a_{q_m q_{m+1}}^*a_{l,n}^*   \\
&  - \sum_{l=1}^{n-1}  a_{l,n+1} \sum_{p=1}^{l} \sum_{\mathbf q;n+1= q_i;l\geq q_{i-1}=p} \prod_{m=1}^{i-2} a_{q_m q_{m+1}}^*  a_{p,n}^*a_{l,n}^*  
\end{align*} 
Hence 
\begin{equation}
\label{f0fnfn }
[[\rho(F_0)_{\boldsymbol \lambda} \rho(F_n)]_{\boldsymbol \mu}  \rho(F_n)]=0.
\end{equation}

For $s=1$ we get $\rho(F_1)=a_{1,2}$ and hence
\begin{align*}
[\rho(F_0)_{\boldsymbol \lambda} \rho(F_1)]&= \sum_{1\leq r< j \leq n+1} a_{rj}   
	\sum_{\mathbf q ;\, j=q_i, r\geq q_{i-1},  q_2=2}  \prod_{j=2}^{i-1} a_{q_j q_{j+1}}^*a_{r,n+1}^*     \\
&\quad +\sum_{r< n+1}\sum_{\mathbf q \atop r=q_i: q_2=2} 
    \prod_{j=2}^i a_{q_{j}q{_{j +1} }}^*   \Phi b_r  
+\sum_{1\leq r\leq n}\sum_{\mathbf q \atop  q_2=2;q_i=r} 
   \prod_{j=2}^{i -1}a_{q_{j}q{_{j +1} }}^* \kappa\cdot Da_{r,n+1}^* \\
\end{align*} 
Thus \begin{equation}
\label{f0f1f1 }
[[\rho(F_0)_{\boldsymbol \lambda} \rho(F_1)]_{\boldsymbol \mu}\rho(F_1)]=0.
\end{equation}

Next up is the calculation for $[\rho(F_0)_{\boldsymbol \lambda}[\rho(F_0)_{\boldsymbol \mu} \rho(F_1)]]$:  For a partition $\mathbf q=(1=q_1,q_2,\dots, q_i,n+1)$ recall we set $l(\mathbf q)=i$.  We now write
\begin{align*}
[\rho(F_0)&_{\boldsymbol \lambda} \rho(F_1)]=A_{01}+B_{01}+C_{01} \quad\text{where}\\
A_{01}&= \sum_{1\leq r< j \leq n+1} a_{rj}  
	 \sum_{\mathbf q ;\, j=q_{l(\mathbf q)};  r\geq q_{l(\mathbf p)-1},q_2=2}  
	 	\prod_{j=2}^{l(\mathbf q)-1} a_{q_j q_{j+1}}^* a_{r,n+1}^*    \\
B_{01}&=\sum_{r\leq n}\sum_{\mathbf q \atop r\geq q_{l(\mathbf q)}: q_2=2} 
    \prod_{j=2}^{l(\mathbf q)} a_{q_{j}q{_{j +1} }}^*   \Phi b_r  \\ 
C_{01}&=\sum_{1\leq r\leq n}\sum_{\mathbf q \atop  q_2=2;q_{l(\mathbf q)}=r} 
   \prod_{j=2}^{l(\mathbf q)-1}a_{q_{j}q{_{j +1} }}^* \kappa\cdot Da_{r,n+1}^* \\
\end{align*} 
and $F_0=A+B+C$ where
\begin{align*}
A& =\sum_{1\leq r<j  \leq n+1} -a_{rj}  \sum_{\mathbf q;j=q_{ l(\mathbf q)};r\geq q_{l(\mathbf p)-1}} \prod_{l=1}^{l(\mathbf q)-1} a_{q_l q_{l+1}}^* a_{r,n+1}^*  \\
B&= -\sum_{1 \leq r <n+1}\sum_{ r \geq  q_{l(\mathbf q)} ,\mathbf q} 
    \prod_{j=1}^{l(\mathbf q)} a_{q_{j}q{_{j +1} }}^*  \Phi(b_r)    \\ 
C&= -\sum_{1\leq r <n+1}\sum_{ r=q_{l(\mathbf q)},\mathbf q} 
   \prod_{j=1}^{l(\mathbf q)-1}a_{q_{j}q{_{j +1} }}^* \kappa \cdot Da_{r,n+1}^* .
\end{align*}
Then
\begin{align*}
[F_0{_{\boldsymbol \lambda}} [F_0{_{\boldsymbol \mu}}F_1]]
& =[A_{\boldsymbol \lambda} A_{01}]+[B_{\boldsymbol \lambda} B_{01}]   +[A_{\boldsymbol \lambda} B_{01}]+[B_{\boldsymbol \lambda} A_{01}]+[A_{\boldsymbol \lambda} C_{01}]+[C_{\boldsymbol \lambda} A_{01}] .
 \end{align*}
 Now we calculate each summand above
\begin{align*}
[A_{\boldsymbol \lambda} &A_{01}] = - \sum_{1\leq r<j  \leq n+1\atop 1\leq s<k  \leq n+1} 
	\sum_{\mathbf q;j= q_{l(\mathbf q)};r\geq q_{l(\mathbf q)-1}
	\atop \mathbf p;k= p_l(\mathbf p) ;s\geq p_{l(\mathbf p) -1},p_2=2}[a_{rj}   
	\prod_{l=1}^{l(\mathbf q)-1} a_{q_l q_{l+1}}^*	
	a_{r,n+1}^*{_{\boldsymbol \lambda}} a_{sk}    \prod_{\xi=2}^{l(\mathbf p) -1} a_{p_\xi p_{\xi+1}}^* a_{s,n+1}^*]   \\
 = & -\sum_{1\leq r<j  \leq n+1\atop 1\leq s<k  \leq n+1} 
	\sum_{\mathbf q;j= q_{l(\mathbf q)};r\geq q_{l(\mathbf q)-1}
	\atop \mathbf p;k= p_{l(\mathbf p)} ;s\geq p_{{l(\mathbf p)} -1},p_2=2} a_{sk}  
	\left( [a_{rj}{_{\boldsymbol \lambda}} 	 \prod_{\xi=2}^{l(\mathbf p)-1}
	  a_{p_\xi p_{\xi+1}}^*] a_{s,n+1}^* \right. \\ 
	  & \hskip 150pt \left. +   
	   \prod_{\xi=2}^{l(\mathbf p)-1}a_{p_\xi p_{\xi+1}}^*[a_{rj} {_{\boldsymbol \lambda}} 
	a_{s,n+1}^*]\right) \prod_{l=1}^{l(\mathbf q)-1} a_{q_l q_{l+1}}^* a_{r,n+1}^*  \\  
 & -  \sum_{1\leq r<j  \leq n+1\atop 1\leq s<k  \leq n+1} 
	\sum_{\mathbf q;j= q_{l(\mathbf q)};r\geq q_{l(\mathbf q)-1}
	\atop \mathbf p;k= p_{l(\mathbf p)} ;s\geq p_{{l(\mathbf p)} -1},p_2=2}
	a_{rj}   \left([\prod_{l=1}^{l(\mathbf q)-1}a_{q_l q_{l+1}}^* {_{\boldsymbol \lambda}}  
	a_{sk}]a_{r,n+1}^*
	\right. \\ 
	  &  \hskip 150pt  \left.  + \prod_{l=1}^{l(\mathbf q)-1}a_{q_l q_{l+1}}^*
	[a_{r,n+1}^*{_{\boldsymbol \lambda}} a_{sk} ] \right) \prod_{\xi=2}^{l(\mathbf p) -1} 
	a_{p_\xi p_{\xi+1}}^* a_{s,n+1}^*.  
	\end{align*}
	Re-indexing the above gives
	\begin{align*}
  =  &-\sum_{1\leq r<j  \leq n+1\atop 1\leq s<k  \leq n+1} 
	\sum_{\mathbf q;j= q_{l(\mathbf q)};r\geq q_{l(\mathbf q)-1}
	\atop \mathbf p;k= p_{l(\mathbf p)} ;s\geq p_{{l(\mathbf p)} -1},p_2=2} a_{sk}  
	\left( [a_{rj}{_{\boldsymbol \lambda}}  	 \prod_{\xi=2}^{l(\mathbf p)-1}
	  a_{p_\xi p_{\xi+1}}^*] a_{s,n+1}^* +   
	   \prod_{\xi=2}^{l(\mathbf p)-1}a_{p_\xi p_{\xi+1}}^*[a_{rj} {_{\boldsymbol \lambda}} 
	a_{s,n+1}^*]\right) \\
	&\hskip 200pt \cdot \prod_{l=1}^{l(\mathbf q)-1} a_{q_l q_{l+1}}^* a_{r,n+1}^*  \\  
& -  \sum_{1\leq s <k   \leq n+1\atop 1\leq r<j   \leq n+1} 
	\sum_{\mathbf q;k = q_{l(\mathbf q)};s \geq q_{l(\mathbf q)-1}
	\atop \mathbf p;j = p_{l(\mathbf p)} ;r\geq p_{{l(\mathbf p)} -1},p_2=2}
	a_{s k }   \left([\prod_{l=1}^{l(\mathbf q)-1}a_{q_l q_{l+1}}^* {_{\boldsymbol \lambda}} a_{rj }]a_{s ,n+1}^*
	+\prod_{l=1}^{l(\mathbf q)-1} a_{q_l q_{l+1}}^*
	[a_{s ,n+1}^*{_{\boldsymbol \lambda}} a_{rj } ] \right) \\
	&\hskip 200pt \cdot\prod_{\xi=2}^{l(\mathbf p) -1} 
	a_{p_\xi p_{\xi+1}}^* a_{r,n+1}^*  
\end{align*}
\begin{align*}
&=  -\sum_{ 1\leq s<k  \leq n+1} 
	\sum_{\mathbf q;n+1= q_{l(\mathbf q)};s\geq q_{l(\mathbf q)-1}
	\atop \mathbf p;k= p_{l(\mathbf p)} ;s\geq p_{{l(\mathbf p)} -1},p_2=2} a_{sk}  
	   \prod_{\xi=2}^{l(\mathbf p)-1}a_{p_\xi p_{\xi+1}}^*
	   \prod_{l=1}^{l(\mathbf q)-1} a_{q_l q_{l+1}}^* a_{s,n+1}^*  \\  
&\quad +  \sum_{1\leq s <k   \leq n+1 } 
	\sum_{\mathbf q;k = q_{l(\mathbf q)};s \geq q_{l(\mathbf q)-1}
	\atop \mathbf p;n+1 = p_{l(\mathbf p)} ;s\geq p_{{l(\mathbf p)} -1},p_2=2} a_{s k }  
	\prod_{l=1}^{l(\mathbf q)-1} a_{q_l q_{l+1}}^*
	 \prod_{\xi=2}^{l(\mathbf p) -1} 
	a_{p_\xi p_{\xi+1}}^* a_{s,n+1}^*   \\ 
&\quad -\sum_{1\leq r<j  \leq n+1\atop 1\leq s<k  \leq n+1} 
	\sum_{\mathbf q;j= q_{l(\mathbf q)};r\geq q_{l(\mathbf q)-1}
	\atop \mathbf p;k= p_{l(\mathbf p)} ;s\geq p_{{l(\mathbf p)} -1},p_2=2} a_{sk}  
	 [a_{rj}{_{\boldsymbol \lambda}} 	 \prod_{\xi=2}^{l(\mathbf p)-1}
	  a_{p_\xi p_{\xi+1}}^*] a_{s,n+1}^*  \prod_{l=1}^{l(\mathbf q)-1} a_{q_l q_{l+1}}^* a_{r,n+1}^*  \\  
&\quad -  \sum_{1\leq s <k   \leq n+1\atop 1\leq r<j   \leq n+1} 
	\sum_{\mathbf q;k = q_{l(\mathbf q)};s \geq q_{l(\mathbf q)-1}
	\atop \mathbf p;j = p_{l(\mathbf p)} ;r\geq p_{{l(\mathbf p)} -1},p_2=2}
	a_{s k }   [\prod_{l=1}^{l(\mathbf q)-1}a_{q_l q_{l+1}}^* {_{\boldsymbol \lambda}} a_{rj }]a_{s ,n+1}^*
	  \prod_{\xi=2}^{l(\mathbf p) -1} 
	a_{p_\xi p_{\xi+1}}^* a_{r,n+1}^*     \\ 
&=  -\sum_{ 1\leq s<k  \leq n+1} 
	\sum_{\mathbf q;n+1= q_{l(\mathbf q)};s\geq q_{l(\mathbf q)-1}
	\atop \mathbf p;k= p_{l(\mathbf p)} ;s\geq p_{{l(\mathbf p)} -1},p_2=2} a_{sk}  
	   \prod_{\xi=2}^{l(\mathbf p)-1}a_{p_\xi p_{\xi+1}}^*
	   \prod_{l=1}^{l(\mathbf q)-1} a_{q_l q_{l+1}}^* a_{s,n+1}^*  \\  
&\quad +  \sum_{1\leq s <k   \leq n+1 } 
	\sum_{\mathbf q;k = q_{l(\mathbf q)};s \geq q_{l(\mathbf q)-1}
	\atop \mathbf p;n+1 = p_{l(\mathbf p)} ;s\geq p_{{l(\mathbf p)} -1},p_2=2} a_{s k }  
	\prod_{l=1}^{l(\mathbf q)-1} a_{q_l q_{l+1}}^*
	 \prod_{\xi=2}^{l(\mathbf p) -1} 
	a_{p_\xi p_{\xi+1}}^* a_{s,n+1}^*   \\ 
&\quad -\sum_{1\leq r<j  \leq n+1\atop 1\leq s<k  \leq n+1} 
	\sum_{\mathbf q;j= q_{l(\mathbf q)};r\geq q_{l(\mathbf q)-1,
	\exists t\leq l(\mathbf p)-1; (r,j)=(p_t,p_{t+1})}
	\atop \mathbf p;k= p_{l(\mathbf p)} ;s\geq p_{{l(\mathbf p)} -1},p_2=2} a_{sk}  
	 [a_{rj}{_{\boldsymbol \lambda}} 	 \prod_{\xi=2}^{l(\mathbf p)-1}
	  a_{p_\xi p_{\xi+1}}^*] a_{s,n+1}^*  \prod_{l=1}^{l(\mathbf q)-1} a_{q_l q_{l+1}}^* a_{r,n+1}^*  \\  
&\quad -  \sum_{1\leq r\leq s <k   \leq n+1\atop 1\leq r<j   \leq n+1} 
	\sum_{\mathbf q;k = q_{l(\mathbf q)};s \geq q_{l(\mathbf q)-1},
	\exists t\leq l(\mathbf q)-1; (r,j)=(q_t,q_{t+1})
	\atop \mathbf p;j = p_{l(\mathbf p)} ;r\geq p_{{l(\mathbf p)} -1},p_2=2}
	a_{s k }   [\prod_{l=1}^{l(\mathbf q)-1}a_{q_l q_{l+1}}^* {_{\boldsymbol \lambda}} a_{rj }]a_{s ,n+1}^*
	  \prod_{\xi=2}^{l(\mathbf p) -1} 
	a_{p_\xi p_{\xi+1}}^* a_{r,n+1}^*    
\end{align*}
\begin{align*}
 = & -\sum_{ 1\leq s<k  \leq n+1} 
	\sum_{\mathbf q;n+1= q_{l(\mathbf q)};s\geq q_{l(\mathbf q)-1}
	\atop \mathbf p;k= p_{l(\mathbf p)} ;s\geq p_{{l(\mathbf p)} -1},p_2=2} a_{sk}  
	   \prod_{\xi=2}^{l(\mathbf p)-1}a_{p_\xi p_{\xi+1}}^*
	   \prod_{l=1}^{l(\mathbf q)-1} a_{q_l q_{l+1}}^* a_{s,n+1}^*  \\  
& +  \sum_{1\leq s <k   \leq n+1 } 
	\sum_{\mathbf q;k = q_{l(\mathbf q)};s \geq q_{l(\mathbf q)-1}
	\atop \mathbf p;n+1 = p_{l(\mathbf p)} ;s\geq p_{{l(\mathbf p)} -1},p_2=2} a_{s k }  
	\prod_{l=1}^{l(\mathbf q)-1} a_{q_l q_{l+1}}^*
	 \prod_{\xi=2}^{l(\mathbf p) -1} 
	a_{p_\xi p_{\xi+1}}^* a_{s,n+1}^*   \\ 
&\  -\sum_{2\leq r\leq s,j\leq k  \leq n+1} 
	\sum_{\mathbf q;j= q_{l(\mathbf q)};r\geq q_{l(\mathbf q)-1,
	\exists t;2\leq t \leq l(\mathbf p)-1; (r,j)=(p_t,p_{t+1})}
	\atop \mathbf p;k= p_{l(\mathbf p)} ;s\geq p_{{l(\mathbf p)} -1},p_2=2} a_{sk}  
	 \prod_{\xi=2,\xi\neq t}^{l(\mathbf p)-1}
	  a_{p_\xi p_{\xi+1}}^* a_{s,n+1}^*  \prod_{l=1}^{l(\mathbf q)-1} a_{q_l q_{l+1}}^* a_{r,n+1}^*  \\  
&  +  \sum_{1\leq r\leq s <k   \leq n+1\atop 1\leq r<j   \leq n+1} 
	\sum_{\mathbf q;k = q_{l(\mathbf q)};s \geq q_{l(\mathbf q)-1},
	\exists t\leq l(\mathbf q)-1; (r,j)=(q_t,q_{t+1})
	\atop \mathbf p;j = p_{l(\mathbf p)} ;r\geq p_{{l(\mathbf p)} -1},p_2=2}
	a_{s k }  \prod_{l=1,l\neq t}^{l(\mathbf q)-1}a_{q_l q_{l+1}}^* a_{s ,n+1}^*
	  \prod_{\xi=2}^{l(\mathbf p) -1} 
	a_{p_\xi p_{\xi+1}}^* a_{r,n+1}^*   .
\end{align*}
To show that the above summation is zero reduces to showing the following are zero:
\begin{align*}
& I_1:=  - 
	\sum_{\mathbf q;n+1= q_{l(\mathbf q)};s\geq q_{l(\mathbf q)-1}
	\atop \mathbf p;k= p_{l(\mathbf p)} ;s\geq p_{{l(\mathbf p)} -1},p_2=2} 
	   \prod_{\xi=2}^{l(\mathbf p)-1}a_{p_\xi p_{\xi+1}}^*
	   \prod_{l=1}^{l(\mathbf q)-1} a_{q_l q_{l+1}}^*    \\  
& \quad +  \sum_{ 1\leq r<j   \leq n+1} 
	\sum_{\mathbf q;k = q_{l(\mathbf q)};s \geq q_{l(\mathbf q)-1},
	\exists t\leq l(\mathbf q)-1; (r,j)=(q_t,q_{t+1})
	\atop \mathbf p;j = p_{l(\mathbf p)} ;r\geq p_{{l(\mathbf p)} -1},p_2=2}
	   \prod_{l=1,l\neq t}^{l(\mathbf q)-1}a_{q_l q_{l+1}}^*  
	  \prod_{\xi=2}^{l(\mathbf p) -1} 
	a_{p_\xi p_{\xi+1}}^* a_{r,n+1}^*     \\ \\
\end{align*}
Note that the first summation is over all partitions $\mathbf p$ and $\mathbf q$ and has summands of the form
$$
a_{2p_3}^*\cdots a_{p_{l(\mathbf p)-1}k}^*a_{1,q_2}^*\cdots a_{q_{l(\mathbf q)-1}n+1}^*
$$
with $p_{l(\mathbf p)-1}\leq s$ and $q_{l(\mathbf q)-1}\leq s$. 
The second summation is over all partitions $\mathbf p$ and $\mathbf q$ and has summands of the form
\begin{align*}
&a_{1,q_2}^*\cdots a_{q_{t-1},r}^*a_{j,q_{t+2}}^*\cdots a_{q_{l(\mathbf q)-1},k}^*a_{2,p_3}^*\cdots a_{p_{l(\mathbf p)-1},j}^*a_{r,n+1}^*  \\
&=(a_{2,p_3}^*\cdots a_{p_{l(\mathbf p)-1},j}^*a_{j,q_{t+2}}^*\cdots a_{q_{l(\mathbf q)-1},k}^*) (a_{1,q_2}^*\cdots a_{q_{t-1},r}^*a_{r,n+1}^*  )\\
\end{align*}
where $s \geq q_{l(\mathbf q)-1},s\geq  r\geq p_{{l(\mathbf p)} -1}, r<j\leq k$.  But these two sets of partitions are the same so $I_1=0$.  

Similarly if we look at the partitions for the summands of 
\begin{align*}
&I_2:= 
	\sum_{\mathbf q;k = q_{l(\mathbf q)};s \geq q_{l(\mathbf q)-1}
	\atop \mathbf p;n+1 = p_{l(\mathbf p)} ;s\geq p_{{l(\mathbf p)} -1},p_2=2}  
	\prod_{l=1}^{l(\mathbf q)-1} a_{q_l q_{l+1}}^*
	 \prod_{\xi=2}^{l(\mathbf p) -1} 
	a_{p_\xi p_{\xi+1}}^*     \\ 
&\quad -\sum_{2\leq r <j \leq n+1} 
	\sum_{\stack{\stack{\mathbf q;j= q_{l(\mathbf q)};r\geq q_{l(\mathbf q)-1}
	}{\exists t;2\leq t \leq l(\mathbf p)-1; (r,j)=(p_t,p_{t+1})}}  
	{\mathbf p;k= p_{l(\mathbf p)} ;s\geq p_{{l(\mathbf p)} -1},p_2=2}   }
	 \prod_{\xi=2,\xi\neq t}^{l(\mathbf p)-1}
	  a_{p_\xi p_{\xi+1}}^*    \prod_{l=1}^{l(\mathbf q)-1} a_{q_l q_{l+1}}^* a_{r,n+1}^*  \\  
\end{align*}
Collecting partitions shows that $I_2=0$.  Hence $[A_{\boldsymbol \lambda} A_{01}]=0$.

\begin{align*}
[B_{\boldsymbol \lambda} B_{01}]  &=-\sum_{1 \leq r <n+1}
	\sum_{1 \leq k <n+1}\sum_{ \mathbf q; r \geq  q_{l(\mathbf q)}} 
	\sum_{ \mathbf p,k \geq  p_{l(\mathbf p)} ,p_2=2} 
     	[\prod_{j=1}^{l(\mathbf q)} a_{q_{j}q{_{j +1} }}^* \Phi(b_r){_{\boldsymbol \lambda}}
	\prod_{l=2}^{l(\mathbf p)} a_{p_{l}p{_{l +1} }}^* \Phi(b_k)]  \\
&=\sum_{1 \leq r <n+1}\sum_{1 \leq k <n+1}A_{rk}\sum_{ \mathbf q; r \geq  q_{l(\mathbf q)} } 
	\sum_{ \mathbf p,k \geq  p_{l(\mathbf p)} ,p_2=2} 
   \kappa\cdot ({\boldsymbol \lambda} +D)\left( \prod_{j=1}^{l(\mathbf q)} a_{q_{j}q{_{j +1} }}^*\right)
    \prod_{l=2}^{l(\mathbf p)} a_{p_{l}p{_{l +1} }}^* \\
&=\sum_{1 \leq r <n+1}\sum_{1 \leq k <n+1}(-\delta_{r,k-1}+2\delta_{rk}-\delta_{r,k+1})
	\sum_{ \stack{\mathbf q}{r \geq  q_{l(\mathbf q)}} }\sum_{ \stack{\mathbf p}{k \geq  p_{l(\mathbf p)} ,p_2=2} }
   	\kappa\cdot ({\boldsymbol \lambda} +D)\left( \prod_{j=1}^{l(\mathbf q)} a_{q_{j}q{_{j +1} }}^*\right)
    	\prod_{l=2}^{l(\mathbf p)} a_{p_{l}p{_{l +1} }}^* \\
&=-\sum_{1 \leq r <n }  \sum_{\mathbf q; r \geq  q_{l(\mathbf q)}} 
	\sum_{ \mathbf p ; r+1 \geq  p_{l(\mathbf p)} ,p_2=2} 
   	\kappa\cdot ({\boldsymbol \lambda} +D)\left( \prod_{j=1}^{l(\mathbf q)} a_{q_{j}q{_{j +1} }}^*\right)
    	\prod_{l=2}^{l(\mathbf p)} a_{p_{l}p{_{l +1} }}^* \\
&\quad +2 \sum_{1 \leq r <n+1}   \sum_{ \mathbf q; r \geq  q_{l(\mathbf q)} } 
	\sum_{ \mathbf p ; r \geq  p_{l(\mathbf p)} ,p_2=2} 
   	\kappa\cdot ({\boldsymbol \lambda} +D)\left( \prod_{j=1}^{l(\mathbf q)} a_{q_{j}q{_{j +1} }}^*\right)
    	\prod_{l=2}^{l(\mathbf p)} a_{p_{l}p{_{l +1} }}^* \\
&\quad - \sum_{2 \leq r <n+1}  \sum_{\mathbf q;  r \geq  q_{l(\mathbf q)}}
	 \sum_{ \mathbf p; r-1 \geq  p_{l(\mathbf p)} ,p_2=2} 
   	\kappa\cdot ({\boldsymbol \lambda} +D)\left( \prod_{j=1}^{l(\mathbf q)} a_{q_{j}q{_{j +1} }}^*\right)
    	\prod_{l=2}^{l(\mathbf p)} a_{p_{l}p{_{l +1} }}^* \\ 
&=-\sum_{1 \leq r <n }  \sum_{\mathbf q; r \geq  q_{l(\mathbf q)}} 
	\sum_{ \mathbf p ; r+1\geq p_{l(\mathbf p)} ,p_2=2} 
   	\kappa\cdot ({\boldsymbol \lambda} +D)\left( \prod_{j=1}^{l(\mathbf q)} a_{q_{j}q{_{j +1} }}^*\right)
    	\prod_{l=2}^{l(\mathbf p)} a_{p_{l}p{_{l +1} }}^* \\
&\quad +2 \sum_{1 \leq r <n+1}   \sum_{ \mathbf q; r \geq  q_{l(\mathbf q)} } 
	\sum_{ \mathbf p ; r \geq  p_{l(\mathbf p)} ,p_2=2} 
   	\kappa\cdot ({\boldsymbol \lambda} +D)\left( \prod_{j=1}^{l(\mathbf q)} a_{q_{j}q{_{j +1} }}^*\right)  
	\prod_{l=2}^{l(\mathbf p)} a_{p_{l}p{_{l +1} }}^*\\
&\quad - \sum_{1 \leq r <n}  \sum_{\mathbf q;  r+1 \geq  q_{l(\mathbf q)}}
	 \sum_{ \mathbf p; r  \geq  p_{l(\mathbf p)} ,p_2=2} 
   	\kappa\cdot ({\boldsymbol \lambda} +D)\left( \prod_{j=1}^{l(\mathbf q)} a_{q_{j}q{_{j +1} }}^*\right)
    	\prod_{l=2}^{l(\mathbf p)} a_{p_{l}p{_{l +1} }}^* \\ \
&=- \sum_{1 \leq r <n }  \sum_{\mathbf q; r \geq  q_{l(\mathbf q)}} 
	\sum_{ \mathbf p ; r+1=p_{l(\mathbf p)} ,p_2=2} 
   	\kappa\cdot ({\boldsymbol \lambda} +D)\left( \prod_{j=1}^{l(\mathbf q)} a_{q_{j}q{_{j +1} }}^*\right)
    	\prod_{l=2}^{l(\mathbf p)} a_{p_{l}p{_{l +1} }}^* \\
&\quad -     \sum_{1 \leq r <n } \sum_{ \mathbf q; r \geq  q_{l(\mathbf q)} } 
	\sum_{ \mathbf p ; r \geq  p_{l(\mathbf p)} ,p_2=2} 
   	\kappa\cdot ({\boldsymbol \lambda} +D) \left( \prod_{j=1}^{l(\mathbf q)} a_{q_{j}q{_{j +1} }}^*\right)
    	\prod_{l=2}^{l(\mathbf p)} a_{p_{l}p{_{l +1} }}^* \\
&\quad +2 \sum_{1 \leq r <n }   \sum_{ \mathbf q; r \geq  q_{l(\mathbf q)} } 
	\sum_{ \mathbf p ; r \geq  p_{l(\mathbf p)} ,p_2=2} 
   	\kappa\cdot ({\boldsymbol \lambda} +D)\left( \prod_{j=1}^{l(\mathbf q)} a_{q_{j}q{_{j +1} }}^*\right)  
	\prod_{l=2}^{l(\mathbf p)} a_{p_{l}p{_{l +1} }}^*\\
&\quad +2    \sum_{ \mathbf q; n \geq  q_{l(\mathbf q)} } 
	\sum_{ \mathbf p ; n \geq  p_{l(\mathbf p)} ,p_2=2} 
   	\kappa\cdot ({\boldsymbol \lambda} +D)\left( \prod_{j=1}^{l(\mathbf q)} a_{q_{j}q{_{j +1} }}^*\right)  
	\prod_{l=2}^{l(\mathbf p)} a_{p_{l}p{_{l +1} }}^*\\
&\quad - \sum_{1 \leq r <n}  \sum_{\mathbf q;  r+1 =  q_{l(\mathbf q)}}
	 \sum_{ \mathbf p; r  \geq  p_{l(\mathbf p)} ,p_2=2} 
   	\kappa\cdot ({\boldsymbol \lambda} +D)\left( \prod_{j=1}^{l(\mathbf q)} a_{q_{j}q{_{j +1} }}^*\right)
    	\prod_{l=2}^{l(\mathbf p)} a_{p_{l}p{_{l +1} }}^* \\ 
&\quad - \sum_{1 \leq r <n}  \sum_{\mathbf q;  r  \geq  q_{l(\mathbf q)}}
	 \sum_{ \mathbf p; r  \geq  p_{l(\mathbf p)} ,p_2=2} 
   	\kappa\cdot ({\boldsymbol \lambda} +D)\left( \prod_{j=1}^{l(\mathbf q)} a_{q_{j}q{_{j +1} }}^*\right)
    	\prod_{l=2}^{l(\mathbf p)} a_{p_{l}p{_{l +1} }}^*  
\end{align*}
\begin{align*}
&=- \sum_{1 \leq r <n }  \sum_{\mathbf q; r \geq  q_{l(\mathbf q)}} 
	\sum_{ \mathbf p ; r+1=p_{l(\mathbf p)} ,p_2=2} 
   	\kappa\cdot ({\boldsymbol \lambda} +D)\left( \prod_{j=1}^{l(\mathbf q)} a_{q_{j}q{_{j +1} }}^*\right)
    	\prod_{l=2}^{l(\mathbf p)} a_{p_{l}p{_{l +1} }}^* \\
&\quad +2    \sum_{ \mathbf q; n \geq  q_{l(\mathbf q)} } 
	\sum_{ \mathbf p ; n \geq  p_{l(\mathbf p)} ,p_2=2} 
   	\kappa\cdot ({\boldsymbol \lambda} +D)\left( \prod_{j=1}^{l(\mathbf q)} a_{q_{j}q{_{j +1} }}^*\right)  
	\prod_{l=2}^{l(\mathbf p)} a_{p_{l}p{_{l +1} }}^*\\
&\quad - \sum_{1 \leq r <n}  \sum_{\mathbf q;  r+1 =  q_{l(\mathbf q)}}
	 \sum_{ \mathbf p; r  \geq  p_{l(\mathbf p)} ,p_2=2} 
   	\kappa\cdot ({\boldsymbol \lambda} +D)\left( \prod_{j=1}^{l(\mathbf q)} a_{q_{j}q{_{j +1} }}^*\right)
    	\prod_{l=2}^{l(\mathbf p)} a_{p_{l}p{_{l +1} }}^*.
\end{align*}

Now 
\begin{align*}
[A_{\boldsymbol \lambda} B_{01}]&=-\sum_{1\leq r<j  \leq n+1}
	 \sum_{\stack{\mathbf q}{j= q_{l(\mathbf q)};r\geq q_{l(\mathbf q)-1}}}
 	\sum_{s=1}^n\sum_{\stack{\mathbf p}{ s \geq  p_{l(\mathbf p)} ,p_2=2}  }
 	\prod_{l=1}^{l(\mathbf q)-1} a_{q_l q_{l+1}}^* 
   	[a_{rj} {_{\boldsymbol \lambda}} \prod_{j=2}^{{l(\mathbf p)}} a_{p_{j}p{_{j +1} }}^*] a_{r,n+1}^* \Phi(b_s)     \\
&=-\sum_{s=1}^n\sum_{2\leq r<j  \leq n+1}
	 \sum_{\stack{\mathbf q}{j= q_{l(\mathbf q)};r\geq q_{l(\mathbf q)-1}}}
 	\sum_{\stack{\mathbf p}{s \geq  p_{l(\mathbf p)} ,p_2=2}  }
 	\prod_{l=1}^{l(\mathbf q)-1} a_{q_l q_{l+1}}^* 
   	[a_{rj} {_{\boldsymbol \lambda}} \prod_{j=2}^{{l(\mathbf p)}} a_{p_{j}p{_{j +1} }}^*] a_{r,n+1}^* \Phi(b_s)     \\
 [B{_{\boldsymbol \lambda}} A_{01}]   &=-\sum_{1\leq r<j  \leq n +1} 
 	\sum_{\stack{\mathbf q}{j= q_{l(\mathbf q)};r\geq q_{{l(\mathbf q)}-1} ,q_2=2} 
 	\sum_{s=1}^{n}\sum_{ \mathbf p; s \geq  p_{l(\mathbf p)}}  }
	\prod_{l=2}^{{l(\mathbf q)}-1} a_{q_l q_{l+1}}^* 
   	[ \prod_{j=1}^{{l(\mathbf p)}} a_{p_{j}p{_{j +1} }}^*{_{\boldsymbol \lambda}}a_{rj} ] a_{r,n+1}^* \Phi(b_s)  \\ 
&= -\sum_{s=1}^{n}\sum_{2\leq r<j  \leq n +1} 
 	\sum_{\stack{\mathbf q}{j= q_{l(\mathbf q)};r\geq q_{{l(\mathbf q)}-1} ,q_2=2} }
 	\sum_{ \stack{\mathbf p}{ s \geq  p_{l(\mathbf p)}}  }
	\prod_{l=2}^{{l(\mathbf q)}-1} a_{q_l q_{l+1}}^* 
   	[ \prod_{j=1}^{{l(\mathbf p)}} a_{p_{j}p{_{j +1} }}^*{_{\boldsymbol \lambda}}a_{rj} ] a_{r,n+1}^* \Phi(b_s) .
 \end{align*}
Hence $[A_{\boldsymbol \lambda} B_{01}]+[B_{01}{_{\boldsymbol \lambda}}A]=0$ follows from the calculation below each fixed $1\leq s\leq n$, and $r\geq 2$
\begin{align*}
&- \sum_{r<j  \leq n+1}
	 \sum_{\mathbf q;j= q_{l(\mathbf q)};r\geq q_{l(\mathbf q)-1}}
 	\sum_{\mathbf p; s \geq  p_{l(\mathbf p)} ,p_2=2}  
 	\prod_{l=1}^{l(\mathbf q)-1} a_{q_l q_{l+1}}^* 
   	[a_{rj} {_{\boldsymbol \lambda}} \prod_{j=2}^{{l(\mathbf p)}} a_{p_{j}p{_{j +1} }}^*]    \\
& - \sum_{r<j  \leq n +1} 
 	\sum_{\mathbf q;j= q_{l(\mathbf q)};r\geq q_{{l(\mathbf q)}-1} ,q_2=2} 
 	\sum_{ \mathbf p; s \geq  p_{l(\mathbf p)}}  
	\prod_{l=2}^{{l(\mathbf q)}-1} a_{q_l q_{l+1}}^* 
   	[ \prod_{j=1}^{{l(\mathbf p)}} a_{p_{j}p{_{j +1} }}^*{_{\boldsymbol \lambda}}a_{rj} ]   \\
&=- \sum_{r<j  \leq n+1}
	 \sum_{\mathbf q;j= q_{l(\mathbf q)};r\geq q_{l(\mathbf q)-1}}
 	\sum_{\mathbf p; s \geq  p_{l(\mathbf p)} ,p_2=2\atop \exists t; (r,j)=(p_t,p_{t+1})}  
 	\prod_{l=1}^{l(\mathbf q)-1} a_{q_l q_{l+1}}^* 
   	  \prod_{j=2,j\neq t}^{{l(\mathbf p)}} a_{p_{j}p{_{j +1} }}^*    \\
& + \sum_{r<j  \leq n +1} 
 	\sum_{\mathbf q;j= q_{l(\mathbf q)};r\geq q_{{l(\mathbf q)}-1} ,q_2=2} 
 	\sum_{ \mathbf p; s \geq  p_{l(\mathbf p)}\atop \exists t; (r,j)=(p_t,p_{t+1})}  
	\prod_{l=2}^{{l(\mathbf q)}-1} a_{q_l q_{l+1}}^* 
   	  \prod_{j=1,j\neq t}^{{l(\mathbf p)}} a_{p_{j}p{_{j +1} }}^*  \\
&=- \sum_{r<j  \leq n+1}
 	\sum_{\mathbf p; s \geq  p_{l(\mathbf p)} ,p_2=2\atop \exists t; (r,j)=(p_t,p_{t+1})}  
	 \sum_{\mathbf q;j=p{_{t+1} }= q_{l(\mathbf q)};r=p{_{t} }\geq q_{l(\mathbf q)-1}}
 	\prod_{l=1}^{l(\mathbf q)-1} a_{q_l q_{l+1}}^* 
   	  \prod_{j=2,j\neq t}^{{l(\mathbf p)}} a_{p_{j}p{_{j +1} }}^*    \\
& + \sum_{r<j  \leq n +1} 
 	\sum_{ \mathbf p; s \geq  p_{l(\mathbf p)}\atop \exists \tau; (r,j)=(p_ \tau,p_{\tau +1})}  
	\sum_{\mathbf q;p_{\tau+1}=j= q_{l(\mathbf q)};p_{\tau}=r\geq q_{{l(\mathbf q)}-1} ,q_2=2} 
	\prod_{l=2}^{{l(\mathbf q)}-1} a_{q_l q_{l+1}}^* 
   	  \prod_{j=1,j\neq \tau}^{{l(\mathbf p)}} a_{p_{j}p{_{j +1} }}^*.
 \end{align*}
The factors appearing in the first summation is over all partitions $\mathbf p$ and $\mathbf q$ and looks like
$$
a_{1q_2}^*\cdots  a_{q_{l(\mathbf q)-2}q_{l(\mathbf q)-1}}^*a_{q_{l(\mathbf q)-1}j}^*a_{2p_3}^*\cdots a_{p_{t-2},p_{t-1}}^*a_{p_{t-1}r}^*a_{jp_{t+2}}^*\cdots a_{l(\mathbf p),n+1}^*
$$
with $l(\mathbf q)-1\leq r$ and $p_{t-1}\leq r$ whereas the second summation is over all partitions $\mathbf p$ and $\mathbf q$ resulting in
$$
a_{2q_3}^*\cdots a_{q_{l(\mathbf q)-2}q_{l(\mathbf q)-1}}^*a_{q_{l(\mathbf q)-1}j}^*a_{1p_2}^*\cdots a_{p_{\tau-2},p_{\tau-1}}^*a_{p_{\tau-1}r}^*a_{jp_{\tau+2}}^*\cdots a_{l(\mathbf p),n+1}^*
$$
with  $l(\mathbf q)-1\leq r$ and $p_{\tau-1}\leq r$.  Renaming $(1,q_1,\dots, q_{l(\mathbf q)-2})$ to 
$(1,p_1',\dots, p_{\tau-1}')$, and  $(p_3,\dots, p_{t-1})$ to 
$(q_3',\dots, q_{l(\mathbf q')-1}')$ , we see that these two summations cancel.

Next we calculate
\begin{align*}
[A &{}_{\boldsymbol \lambda} C_{01}] \\
 =&-\sum_{1\leq r<j  \leq n+1} [a_{rj} 
		 \sum_{\mathbf q;j= q_{l(\mathbf q)};r\geq q_{l(\mathbf q)-1}} 
		\prod_{l=1}^{l(\mathbf q)-1} a_{q_l q_{l+1}}^* a_{r,n+1}^*{_{\boldsymbol \lambda}}
		\sum_{1\leq s <n+1}\sum_{\mathbf p;  s=p_{l(\mathbf p)}, p_2=2} 
		 \prod_{k=2}^{l(\mathbf p) -1}a_{p_{k}p{_{k +1} }}^* \kappa \cdot Da_{s,n+1}^*] \\
 =&-\sum_{1\leq  r<j\leq s <n+1} \sum_{\mathbf q;j= q_{l(\mathbf q)};\atop r\geq q_{l(\mathbf q)-1}}
		\sum_{ \mathbf p,s=p_{l(\mathbf p)},p_2=2
		\atop \exists t: (p_t,p_{t+1})=(r,j)}   
		\left(\prod_{l=1 }^{l(\mathbf q)-1} a_{q_l q_{l+1}}^*\right)a_{r,n+1}^* \kappa \cdot Da_{s,n+1}^*
   		\prod_{k=2,k\neq t}^{l(\mathbf p)-1}a_{p_{k}p{_{k +1} }}^* \\
  -&\sum_{1\leq  s <n+1} \sum_{\mathbf q;n+1= q_{l(\mathbf q)};s\geq q_{l(\mathbf q)-1}}
		\sum_{ \mathbf p,s=p_{l(\mathbf p)},p_2=2}   
		\kappa\cdot ({\boldsymbol \lambda}+D)\left(\left(\prod_{l=1 }^{l(\mathbf q)-1} 
		a_{q_l q_{l+1}}^*\right) a_{s,n+1}^*\right)
  		 \prod_{k=2}^{l(\mathbf p)-1}a_{p_{k}p{_{k +1} }}^* .
\end{align*}
While on the other hand we have
\begin{align*}
[C&{}_{\boldsymbol \lambda} A_{01}]\\
=&-[\sum_{1\leq s <n+1}\sum_{ \stack{\mathbf p}{s=p_{l(\mathbf p)}}}
  		 \prod_{k=1}^{l(\mathbf p) -1}a_{p_{k}p{_{k +1} }}^* \kappa \cdot Da_{s,n+1}^*{_{\boldsymbol \lambda}}
   		\sum_{1\leq r<j  \leq n+1} a_{rj}  
		\sum_{\stack{\mathbf q,j= q_{l(\mathbf q)}}{;r\geq q_{l(\mathbf q)-1},q_2=2} }
		\prod_{l=2}^{l(\mathbf q)-1} a_{q_l q_{l+1}}^* a_{r,n+1}^*] \\
 =&\sum_{1\leq  r<j\leq s <n+1} \sum_{\mathbf q;j= q_{l(\mathbf q)};
		\atop r\geq q_{l(\mathbf q)-1},q_2=2}
		\sum_{ \mathbf p,s=p_{l(\mathbf p)}
		\atop \exists t: (p_t,p_{t+1})=(r,j)}   
		\left(\prod_{l=2}^{l(\mathbf q)-1} a_{q_l q_{l+1}}^*\right)a_{r,n+1}^* \kappa \cdot Da_{s,n+1}^*
   		\prod_{k=1,l\neq t }^{l(\mathbf p)-1}a_{p_{k}p{_{k +1} }}^* \\
&-\sum_{1\leq s <n+1} 
		\sum_{\mathbf q;n+1= q_{l(\mathbf q)};
		\atop s\geq q_{l(\mathbf q)-1},q_2=2}
		\sum_{ \mathbf p,s=p_{l(\mathbf p)}}   \prod_{l=2 }^{l(\mathbf q)-1} 
		a_{q_l q_{l+1}}^* \ a_{s,n+1}^*
  		 \kappa \cdot({\boldsymbol \lambda}+ D) \prod_{k=1}^{l(\mathbf p)-1}a_{p_{k}p{_{k +1} }}^* 
\end{align*}
Thus
\begin{align*}
[A&{}_{\boldsymbol \lambda} C_{01}]+[C_{\boldsymbol \lambda} A_{01}]\\
=&-\sum_{1\leq  r<j\leq s <n+1} \sum_{\mathbf q;j= q_{l(\mathbf q)};\atop r\geq q_{l(\mathbf q)-1}}
		\sum_{ \mathbf p,s=p_{l(\mathbf p)},p_2=2
		\atop \exists t: (p_t,p_{t+1})=(r,j)}   
		\left(\prod_{l=1 }^{l(\mathbf q)-1} a_{q_l q_{l+1}}^*\right)a_{r,n+1}^* 
   		\prod_{k=2,k\neq t}^{l(\mathbf p)-1}a_{p_{k}p{_{k +1} }}^* \kappa \cdot Da_{s,n+1}^* \\
 & -\sum_{1\leq  s <n+1} \sum_{\mathbf q;n+1= q_{l(\mathbf q)};s\geq q_{l(\mathbf q)-1}}
		\sum_{ \mathbf p,s=p_{l(\mathbf p)},p_2=2}   
		\kappa\cdot ({\boldsymbol \lambda}+D)\left(\left(\prod_{l=1 }^{l(\mathbf q)-1} 
		a_{q_l q_{l+1}}^*\right) a_{s,n+1}^*\right)
  		 \prod_{k=2}^{l(\mathbf p)-1}a_{p_{k}p{_{k +1} }}^* \\
& +\sum_{1\leq  r<j\leq s <n+1} \sum_{\mathbf q;j= q_{l(\mathbf q)};
		\atop r\geq q_{l(\mathbf q)-1},q_2=2}
		\sum_{ \mathbf p,s=p_{l(\mathbf p)}
		\atop \exists t: (p_t,p_{t+1})=(r,j)}   
		\left(\prod_{l=2}^{l(\mathbf q)-1} a_{q_l q_{l+1}}^*\right)a_{r,n+1}^*
   		\prod_{k=1,l\neq t }^{l(\mathbf p)-1}a_{p_{k}p{_{k +1} }}^*  \kappa \cdot Da_{s,n+1}^*\\
&  -\sum_{1\leq s <n+1} 
		\sum_{\mathbf q;n+1= q_{l(\mathbf q)};
		\atop s\geq q_{l(\mathbf q)-1},q_2=2}
		\sum_{ \mathbf p,s=p_{l(\mathbf p)}}   \prod_{l=2 }^{l(\mathbf q)-1} 
		a_{q_l q_{l+1}}^* \ a_{s,n+1}^*
  		 \kappa \cdot({\boldsymbol \lambda}+ D) \prod_{k=1}^{l(\mathbf p)-1}a_{p_{k}p{_{k +1} }}^* 
\end{align*}
Then 
\begin{align*}
[B&{}_{\boldsymbol \lambda} B_{01}]+[A_{\boldsymbol \lambda} C_{01}]+[C_{\boldsymbol \lambda} A_{01}] \\
 =&- \sum_{1 \leq r <n }  \left(\sum_{\mathbf q; r \geq  q_{l(\mathbf q)}} 
	\kappa\cdot ({\boldsymbol \lambda} +D)\left( \prod_{j=1}^{l(\mathbf q)} a_{q_{j}q{_{j +1} }}^*\right)\right)
    	\left(\sum_{ \mathbf p ; r+1=p_{l(\mathbf p)} ,p_2=2} \prod_{l=2}^{l(\mathbf p)} a_{p_{l}p{_{l +1} }}^* 
	\right)\\
&  +2   \left( \sum_{ \mathbf q; n \geq  q_{l(\mathbf q)} } 
   	\kappa\cdot ({\boldsymbol \lambda} +D)\left( \prod_{j=1}^{l(\mathbf q)} a_{q_{j}q{_{j +1} }}^*\right)  \right)
	\left(\sum_{ \mathbf p ; n \geq  p_{l(\mathbf p)} ,p_2=2} 
	\prod_{l=2}^{l(\mathbf p)} a_{p_{l}p{_{l +1} }}^*\right) \\
&\ - \sum_{1 \leq r <n} \left( \sum_{\mathbf q;  r+1 =  q_{l(\mathbf q)}}
   	\kappa\cdot ({\boldsymbol \lambda} +D)\left( \prod_{j=1}^{l(\mathbf q)} a_{q_{j}q{_{j +1} }}^*\right)\right)
 	 \left(\sum_{ \mathbf p; r  \geq  p_{l(\mathbf p)} ,p_2=2} 
   	\prod_{l=2}^{l(\mathbf p)} a_{p_{l}p{_{l +1} }}^*\right) \\ \\
& -\sum_{1\leq  r<j\leq s <n+1} \sum_{\mathbf q;j= q_{l(\mathbf q)};\atop r\geq q_{l(\mathbf q)-1}}
		\sum_{ \mathbf p,s=p_{l(\mathbf p)},p_2=2
		\atop \exists t: (p_t,p_{t+1})=(r,j)}   
		\left(\prod_{l=1 }^{l(\mathbf q)-1} a_{q_l q_{l+1}}^*\right)a_{r,n+1}^* 
   		\prod_{k=2,k\neq t}^{l(\mathbf p)-1}a_{p_{k}p{_{k +1} }}^* \kappa \cdot Da_{s,n+1}^* \\
&  -\sum_{1\leq  s <n+1} \sum_{\mathbf q;n+1= q_{l(\mathbf q)};s\geq q_{l(\mathbf q)-1}}
		\sum_{ \mathbf p,s=p_{l(\mathbf p)},p_2=2}   
		\kappa\cdot ({\boldsymbol \lambda}+D) \left(\prod_{l=1 }^{l(\mathbf q)-1} 
		a_{q_l q_{l+1}}^*\right) a_{s,n+1}^* 
  		 \prod_{k=2}^{l(\mathbf p)-1}a_{p_{k}p{_{k +1} }}^* \\
& -\sum_{1\leq  s <n+1} \sum_{\mathbf q;n+1= q_{l(\mathbf q)};s\geq q_{l(\mathbf q)-1}}
		\sum_{ \mathbf p,s=p_{l(\mathbf p)},p_2=2}   
		  \left(\prod_{l=1 }^{l(\mathbf q)-1} 
		a_{q_l q_{l+1}}^*\right)\kappa\cdot D( a_{s,n+1}^*) 
  		 \prod_{k=2}^{l(\mathbf p)-1}a_{p_{k}p{_{k +1} }}^* \\
& +\sum_{1\leq  r<j\leq s <n+1} \sum_{\mathbf q;j= q_{l(\mathbf q)};
		\atop r\geq q_{l(\mathbf q)-1},q_2=2}
		\sum_{ \mathbf p,s=p_{l(\mathbf p)}
		\atop \exists t: (p_t,p_{t+1})=(r,j)}   
		\left(\prod_{l=2}^{l(\mathbf q)-1} a_{q_l q_{l+1}}^*\right)a_{r,n+1}^*
   		\prod_{k=1,l\neq t }^{l(\mathbf p)-1}a_{p_{k}p{_{k +1} }}^*  \kappa \cdot Da_{s,n+1}^*\\
&  -\sum_{1\leq s <n+1} 
		\sum_{\mathbf q;n+1= q_{l(\mathbf q)};
		\atop s\geq q_{l(\mathbf q)-1},q_2=2}
		\sum_{ \mathbf p,s=p_{l(\mathbf p)}}   \prod_{l=2 }^{l(\mathbf q)-1} 
		a_{q_l q_{l+1}}^* \ a_{s,n+1}^*
  		 \kappa \cdot({\boldsymbol \lambda}+ D) \prod_{k=1}^{l(\mathbf p)-1}a_{p_{k}p{_{k +1} }}^* 
\end{align*}
If we look at the forth summation we have factors with $\mathbf q;j= q_{l(\mathbf q)};r\geq q_{l(\mathbf q)-1},\enspace  \mathbf p,s=p_{l(\mathbf p)},p_2=2;\exists t: (r,j)=(q_t,q_{t+1})$ and are of the form
\begin{align}
&-\left(\prod_{l=1 }^{l(\mathbf q)-1} a_{q_l q_{l+1}}^*\right)a_{r,n+1}^* 
   		\prod_{k=2,k\neq t}^{l(\mathbf p)-1}a_{p_{k}p{_{k +1} }}^* \kappa \cdot Da_{s,n+1}^* \notag \\
&=-a_{1q_2}^*\cdots a_{l(\mathbf q)-1,j}^* a_{r,n+1}^* a_{2,p_3}^*
		\cdots a_{p_{t-1}r}^*a_{jp_{t+2}}^*\cdots 
   		a_{l(\mathbf p)-1s}  \kappa \cdot Da_{s,n+1}^*\notag  \\
&=-(a_{1q_2}^*\cdots a_{l(\mathbf q)-1,j}^*  a_{jp_{t+2}}^*\cdots 
   		a_{l(\mathbf p)-1s}  \kappa \cdot Da_{s,n+1}^*)a_{2,p_3}^*\cdots a_{p_{t-1}r}^*a_{r,n+1}^*  \notag
\end{align}
This is the same as summing over factors with  of the form 
\begin{align}
&- a_{1q_2'}^*\cdots  a_{q_{l(\mathbf q')-1,s}'}^*\kappa \cdot  D\left(a_{s,n+1}^* \right) 
		\prod_{l=2 }^{l(\mathbf p')}a_{p_l' p_{l+1}'}^*  \label{forth}
\end{align}
where $\mathbf q':=(1,q_2,\dots ,l(\mathbf q)-1,j,p_{t+2},\dots ,l(\mathbf p)-1,s,n+1)$ and $\mathbf p':=
(1,2,p_3,\dots, p_{t-1},r,n+1)$.  In the above we have $r<s$ and $l(\mathbf q')=s$ and $l(\mathbf p')<s$.

If we look at the fifth summation we have factors with $\mathbf q;n+1= q_{l(\mathbf q)};s\geq q_{l(\mathbf q)-1},\enspace  \mathbf p,s=p_{l(\mathbf p)},p_2=2$ and are of the form
\begin{align*}
-\kappa\cdot ({\boldsymbol \lambda}+D) &\left(\prod_{l=1 }^{l(\mathbf q)-1} 
		  a_{q_l q_{l+1}}^*\right) a_{s,n+1}^* 
  		 \prod_{k=2}^{l(\mathbf p)-1}a_{p_{k}p{_{k +1} }}^*\\ 
&=-\kappa\cdot ({\boldsymbol \lambda}+D) \left(a_{1q_2}^*\cdots 
		a_{q_{l(\mathbf q)-1} ,n+1}^*\right) a_{s,n+1}^* a_{2p_3}^*\cdots 
		a_{l(\mathbf p)-1,s}^*  \\
&=-\kappa\cdot ({\boldsymbol \lambda}+D) \left(a_{1q_2}^*\cdots 
		a_{q_{l(\mathbf q)-1} ,n+1}^*\right)  a_{2p_3}^*\cdots 
		a_{l(\mathbf p)-1,s}^*a_{s,n+1}^*  \\ 
&=-\kappa\cdot ({\boldsymbol \lambda}+D) \left(a_{1q_2}^*\cdots 
		a_{q_{l(\mathbf q)-1} ,n+1}^*\right)  a_{2p_3}^*\cdots 
		a_{l(\mathbf p),n+1}^*  \\
&=-\kappa\cdot ({\boldsymbol \lambda}+D) \left(\prod_{l=1}^{l(\mathbf q')}a_{q_l' q_{l+1}'}^*\right) 
		\prod_{k=2}^{l(\mathbf p)}a_{p_{k}p{_{k +1} }}^*
\end{align*}
where $\mathbf q':=(1,q_2,\cdots ,q_{l(\mathbf q)-1},n+1)$ (so $q'_{l(\mathbf q')+1}=n+1$ and $q'_{l(\mathbf q')}=q_{l(\mathbf q)-1}$) and $l(\mathbf q')\leq s=l(\mathbf p)$.

If we look at the sixth summation we have factors with $\mathbf q;n+1= q_{l(\mathbf q)};s\geq q_{l(\mathbf q)-1}, 
\mathbf p,s=p_{l(\mathbf p)},p_2=2$ and are of the form
\begin{align}
- \left(\prod_{l=1 }^{l(\mathbf q)-1} 
		a_{q_l q_{l+1}}^*\right)\kappa\cdot D( a_{s,n+1}^*) 
  		 \prod_{k=2}^{l(\mathbf p)-1}a_{p_{k}p{_{k +1} }}^*\label{sixth} &=-a_{1q_2}^*\cdots a_{l(\mathbf q)-1,n+1}^* \kappa\cdot D( a_{s,n+1}^*) 
  		 a_{2p_3}^*\cdots a_{l(\mathbf p)-1,s}^*   \\
 =& -\left(\prod_{l=1 }^{l(\mathbf q')} 
		a_{q_l' q_{l+1}'}^*\right)
  		 a_{2p_3}^*\cdots a_{l(\mathbf p)-1,s}^* \kappa\cdot D( a_{s,n+1}^*) \notag
\end{align}
where $\mathbf q':=(1,q_2,\cdots ,q_{l(\mathbf q)-1},n+1)$ (so $q'_{l(\mathbf q')+1}=n+1$ and $q'_{l(\mathbf q')}=q_{l(\mathbf q)-1}$) and $l(\mathbf q')\leq s$.

In the second to the last summation we have after we switch the $\mathbf q$ with the $\mathbf p$ are summed over $\mathbf p;j= p_{l(\mathbf p)};r\geq p_{l(\mathbf p)-1},p_2=2, \mathbf q,s=q_{l(\mathbf q)}\exists t: (q_t,q_{t+1})=(r,j)$ and  the factors have the form 
\begin{align}
&\left(\prod_{l=2}^{l(\mathbf p)-1} a_{p_l p_{l+1}}^*\right)a_{r,n+1}^*
   		\prod_{k=1,l\neq t }^{l(\mathbf q)-1}a_{q_{k}q{_{k +1} }}^*  \kappa \cdot D(a_{s,n+1}^*) \\
&=a_{1q_2}^*\cdots a_{q_{t-1}r}^*a_{j,q_{t+2}}^*\cdots a_{q_{l(\mathbf q)-1,s}}^* 
		\kappa \cdot D(a_{s,n+1}^*)
		a_{2p_3}^*\cdots 
		a_{l(\mathbf p)-1,j}^*a_{r,n+1}^* \notag \\
&=(a_{1q_2}^*\cdots a_{q_{t-1}r}^*a_{r,n+1}^* )(a_{2p_3}^*\cdots 
		a_{l(\mathbf p)-1,j}^*a_{j,q_{t+2}}^*\cdots 
		a_{q_{l(\mathbf q)-1,s}}^* \kappa \cdot D(a_{s,n+1}^*))\notag
\end{align}
Thus this second to last summation is cancelled by terms of the sixth summation \eqnref{sixth} leaving us with a summation over $\mathbf q;n+1= q_{l(\mathbf q)};s= q_{l(\mathbf q)-1}, 
\mathbf p,s=p_{l(\mathbf p)},p_2=2$  in the sixth summation.   Thus the remaining summation has factors that can be rewritten in the form
\begin{align} \label{sixthprime}
& -
  		 a_{2p_3}^*\cdots a_{l(\mathbf p)-1,s}^* \kappa\cdot D( a_{s,n+1}^*)\prod_{l=1 }^{l(\mathbf q')} 
		a_{q_l' q_{l+1}'}^* = \kappa\cdot D( a_{s,n+1}^*)
  		 a_{2p_3}^*\cdots a_{l(\mathbf p)-1,s}^* a_{s,n+1}^* \prod_{l=1 }^{l(\mathbf q')-1} 
		a_{q_l' q_{l+1}'}^*.
\end{align}

In the last summation we have after we switch the $\mathbf q$ with the $\mathbf p$ are summed over $\mathbf p;n+1= p_{l(\mathbf p)};s\geq p_{l(\mathbf p)-1},p_2=2, \mathbf q,s=q_{l(\mathbf q)}$ and  the factors have the form 
\begin{align*}
&- \prod_{l=2 }^{l(\mathbf p)-1} 
		a_{p_l p_{l+1}}^* \ a_{s,n+1}^*
  		 \kappa \cdot({\boldsymbol \lambda}+ D) \prod_{k=1}^{l(\mathbf q)-1}a_{q_{k}q{_{k +1} }}^*  =- \kappa \cdot({\boldsymbol \lambda}+ D)\left(a_{1q_2}^*\cdots  a_{q_{l(\mathbf q)-1,s}}^* \right) a_{s,n+1}^*
		\prod_{l=2 }^{l(\mathbf p')}a_{p_l' p_{l+1}'}^*   
\end{align*}
where $\mathbf p':=(1,2,p_3,\cdots ,p_{l(\mathbf p)-1},n+1)$ so that $l(\mathbf p')\leq s$.    Now the last summation combines with \eqnref{forth} and \eqnref{sixthprime} to give us a summation over  $\mathbf p';s\geq p'_{l(\mathbf p')},p_2'=2, \mathbf q',s=q'_{l(\mathbf q')}$  with factors have the form 
\begin{align*}
&- \prod_{l=2 }^{l(\mathbf p)-1} 
		a_{p_l p_{l+1}}^* \ a_{s,n+1}^*
  		 \kappa \cdot({\boldsymbol \lambda}+ D) \prod_{k=1}^{l(\mathbf q)-1}a_{q_{k}q{_{k +1} }}^*  =- \kappa \cdot({\boldsymbol \lambda}+ D)\left(\prod_{l=1 }^{l(\mathbf q')}a_{q_l' q_{l+1}'}^*\right)
		\prod_{l=2 }^{l(\mathbf p')}a_{p_l' p_{l+1}'}^*   \\
\end{align*}

As a consequence we obtain
\begin{align*}
[A_{\boldsymbol \lambda} C_{01}]+[C_{\boldsymbol \lambda} A_{01}]&=
		 -\sum_{1\leq  s <n+1} \sum_{\mathbf q;s\geq q_{l(\mathbf q)}}
		\kappa\cdot ({\boldsymbol \lambda}+D) \left(\prod_{l=1 }^{l(\mathbf q)} 
		a_{q_l q_{l+1}}^*\right)  
  		\left(\sum_{ \mathbf p,s=p_{l(\mathbf p)},p_2=2}   
	 \prod_{k=2}^{l(\mathbf p)}a_{p_{k}p{_{k +1} }}^*\right) \\
&\quad  -\sum_{1\leq  s <n+1} \sum_{\mathbf q;s=q_{l(\mathbf q)}}
		\kappa\cdot ({\boldsymbol \lambda}+D) \left(\prod_{l=1 }^{l(\mathbf q)} 
		a_{q_l q_{l+1}}^*\right)  
  		\left(\sum_{ \mathbf p,s\geq p_{l(\mathbf p)},p_2=2}   
		 \prod_{k=2}^{l(\mathbf p)}a_{p_{k}p{_{k +1} }}^*\right) \\
\end{align*}

Now if we set 
 $$
 \mathcal B(t):=\sum_{\mathbf q; t= q_{l(\mathbf q)}}\prod_{l=1 }^{l(\mathbf q)}  a_{q_l q_{l+1}}^* 
 \qquad \text{ and }\qquad
 \mathcal C(t):= \sum_{ \mathbf p,t=p_{l(\mathbf p)},p_2=2}   
		 \prod_{k=2}^{l(\mathbf p)}a_{p_{k}p{_{k +1} }}^* 
$$
then by induction one can show that 
\begin{align*}
& \sum_{s=1}^{n}\left(\left(\sum_{q=1}^s\mathcal B(q)\right)\mathcal C(s)\right)+\sum_{s=1}^{n}  \left(\mathcal B(s)\left(\sum_{q=1}^s\mathcal C(q)\right)\right)  \\
&=2 \left(\left(\sum_{q=1}^n\mathcal B(q)\right) \left(\sum_{q=1}^n\mathcal C(q)\right)\right) - \sum_{s=1}^{n-1}\left(\left(\sum_{q=1}^s\mathcal B(q)\right)\mathcal C(s+1)\right)-\sum_{s=1}^{n-1}  \left(\mathcal B(s+1)\left(\sum_{q=1}^s\mathcal C(q)\right)\right).
\end{align*}
(The identity above holds for any elements $\mathcal B(q)$ and $\mathcal C(s)$ in an algebra with coefficients in $\mathbb Z$.)
Thus we conclude 
$$
[B_{\boldsymbol \lambda} B_{01}]+
[A_{\boldsymbol \lambda} C_{01}]+[C_{\boldsymbol \lambda} A_{01}]=0.
$$
This completes the proof that 
\begin{equation}
\label{f0f0f1 }
[\rho(F_0{_{\boldsymbol \lambda}}) [\rho(F_0){_{\boldsymbol \mu}}\rho(F_1)]]=0.
\end{equation}

Now the proof that $[\rho(F_0 ){_{\boldsymbol \lambda}}[\rho(F_0){_{\boldsymbol \mu}}\rho(F_n)]]=0$ and $[\rho(F_0){_{\boldsymbol \lambda}}\rho(F_0)]=0$ are proven in a similar manner, where in the end it boils down to the following formal identities:
 \begin{align*}
 \sum_{r=1}^n\left(\sum_{s=1}^{n-1}(-\delta_{r,s-1}+2\delta_{rs}-\delta_{r,s+1})\left(\sum_{t=1}^r\left(\sum_{v=1}^s\mathcal B(t)\mathcal C(v)\right)\right)\right) =\sum_{t=1}^{n-1}B(t)C(t)-B(n)\left(\sum_{t=1}^{n-1}C(t)\right)
 \end{align*}
and

$$
\left(\sum_{r=1}^{n-1}\left(\sum_{q=1}^r\mathcal A(q)\right)\mathcal A(r+1)\right)-
\left(\sum_{q=1}^n\mathcal A(q)\right)^2+\sum_{s=1}^n\left(\left(\sum_{q=1}^s\mathcal A(q)\right)\mathcal A(s)\right)=0.
$$

\color{black}

For the Serrre type relations for the $E_r$, the calculations are the same as those in  \cite[Lemma 3.5]{MR2003g:17034} where
$$
-\gamma b_r(z)-\frac{1}{2}\left(b_{r-1}^+(z)+b_{r+1}^+(z)\right)
$$
 is replaced by $\Phi(b_r)$ and 
$$
-\frac{\gamma}{2}\dot a_{r,r+1}^*(z)
$$
is replace by $\kappa\cdot D a_{r,r+1}^*$. 
We refer the interested reader to that paper for the proof.

\end{proof}

\end{document}